\newtheorem{thm}{Theorem}[section]
\newtheorem{prop}[thm]{Proposition}
\newtheorem{cor}[thm]{Corollary}
\newtheorem{lem}[thm]{Lemma}
\theoremstyle{definition}
\newtheorem{definition}[thm]{Definition}
\newtheorem{remark}[thm]{Remark}
\newtheorem{example}[thm]{Example}
\DeclareMathOperator{\cDes}{cDes}
\DeclareMathOperator{\Pk}{Pk}
\DeclareMathOperator{\pk}{pk}
\DeclareMathOperator{\cPk}{cPk}
\DeclareMathOperator{\cpk}{cpk}
\DeclareMathOperator{\tor}{tor}
\DeclareMathOperator{\cyc}{cyc}
\DeclareMathOperator{\cQSym}{cQSym}
\DeclareMathOperator{\QSym}{QSym}
\DeclareMathOperator{\sym}{Sym}
\DeclareMathOperator{\Comp}{Comp}
\DeclareMathOperator{\cComp}{cComp}
\numberwithin{equation}{section}
\begin{document}
\pagestyle{plain}

\title{Enriched toric $[\vec{D}]$-partitions}

\author{Jinting Liang\\[-5pt]
\small Department of Mathematics, Michigan State University,\\[-5pt]
\small East Lansing, MI 48824-1027, USA, {\tt liangj26@msu.edu}}
\date{\today\\[10pt]
	\begin{flushleft}
	\small Key Words: Cyclic peak, cyclic permutation, cyclic quasi-symmetric function, enriched $P$-partition, toric poset, order polynomial   \\[5pt]
	\small AMS subject classification (2010):  05A05 (Primary) 05E05, 06A11 (secondary)
	\end{flushleft}}

\maketitle

\begin{abstract}
This paper develops the theory of enriched toric $[\vec{D}]$-partitions. Whereas Stembridge's enriched $P$-partitions give rises to the peak algebra which is a subring of the ring of quasi-symmetric functions $\QSym$, our enriched toric $[\vec{D}]$-partitions will generate the cyclic peak algebra which is a subring of cyclic quasi-symmetric functions $\cQSym$.  In the same manner as the peak set of linear permutations appears when considering enriched $P$-partitions, the cyclic peak set of cyclic permutations plays an important role in our theory. The associated order polynomial is discussed based on this framework.
\end{abstract}

\section{Introduction} 
Denote by $\mathbb{N}$ and $\mathbb{P}$ the set of nonnegative integers
and positive integers respectively.
For $m,n\in\mathbb{N}$, define $[m,n]=\{m,m+1,\ldots,n\}$ and, as an abbreviation, we write $[n]=[1,n]$ when $m=1$. A {\em linear permutation} of a set $A\subset \mathbb{P}$ 
is an arrangement $w=w_1w_2\dots w_n$ of elements in $A$ where each element is used exactly once. 
In this case, we call $n$ the {\em length} of $w$, written as $\#w=|w|=n$. Let $\mathcal{S}_n$ be the symmetric group on $[n]$ viewed as linear permutations of $[n]$.

A {\em linear permutation statistic} is a function whose domain is the set of all linear permutations. For a linear permutation $w=w_1w_2\dots w_n$, a {\em descent} of $w$ is a position $i$
    such that $w_i>w_{i+1}$. The {\em descent set} $\Des$ is defined by
    \[
    \Des w=\{i\mid \text{$i$ is a descent of $w$}\}\sbe[n-1].
    \]
    The {\em descent number} of $w$ is $\des w:=|\Des w|$. A {\em peak} of $w$ is a position $i$ such that $w_{i-1}<w_i>w_{i+1}$. The {\em peak set} $\Pk$ is defined by
    \[
    \Pk w=\{i\mid \text{$i$ is a peak of $w$}\}\sbe[2,n-1].
    \]
    The {\em peak number} is $\pk w:=|\Pk w|$.

Quasi-symmetric functions first appeared implicitly as generating functions in Richard Stanley's theory of $P$-partitions~\cite{MR0332509}, 
and then were studied by Ira M. Gessel~\cite{MR777705} in his work. To be more precise, for a finite poset $P$, the set of $P$-partitions can be partitioned by the linear extensions of $P$, while each subset corresponds to a fundamental quasi-symmetric function indexed by the descent set $\Des$ of that linear permutation. The ring $\QSym$ of quasi-symmetric functions was further developed, see~\cite{MR1982883,MR3810249,MR1897637} for partial related articles.   
It also found applications in enumerative combinatorics, representation theory and algebraic geometry~\cite{MR1794711,MR2196760,MR3203651}. In the same vein, Stembridges's work~\cite{S:epp} on enriched $P$-partitions gave rise to the algebra of peaks $\Pi$, a graded subring of $\QSym$, which is closed related to the peak set $\Pk$.

For a linear permutation $w=w_1w_2\dots w_n$, we define the corresponding {\em cyclic permutation} $[w]$ to be the set of rotations of $w$, that is,
\[
[w]=\{w_1w_2\dots w_n, \hs{7pt} w_2\dots w_nw_1,\hs{7pt}\ldots,\hs{7pt} w_nw_1\dots w_{n-1}\}.
\]
Let $[\mathcal{S}_n]$ denote the set of cyclic permutations on $[n]$. 

The {\em cyclic descent set} $\cDes$ of a linear permutation $w$ is defined by
    \[
    \cDes w=\{i\mid \text{$w_i>w_{i+1}$ where the subscripts are taken modulo $n$} \}\sbe[n].
    \]
This leads to the {\em cyclic descent set} of a cyclic permutation
     \[
    \cDes[w]=\{\{ \cDes\sigma\mid\sigma\in[w] \}\},
    \]
    where the double curly brackets denote a multiset. The multiplicity comes into play since $\cDes$ may have the same value on different representatives $\sigma$ in $[w]$. In fact, one can regard the cyclic permutation statistic $\cDes$ as an analogy of $\Des$ in the linear setting. Similarly, if we define the {\em cyclic peak set} $\cPk$ of a linear permutation $w$ by
    \[
    \cPk w=\{i\mid \text{$w_{i-1}<w_i>w_{i+1}$ where the subscripts are taken modulo $n$} \}\sbe[n],
    \]
    then the cyclic counterpart of $\Pk$, the {\em cyclic peak set} $\cPk$ of a cyclic permutation is defined as
     \[
    \cPk[w]=\{\{ \cPk\sigma\mid\sigma\in[w] \}\}.
    \]
    
    \begin{example}\label{ex:set}
Consider permutation $w=3124$ on $[4]$, then 
\[
\Des w=\{1\},\;\cDes w=\{1,4\},\;\Pk w=\emptyset,\;\cPk w=\{4\}.
\]
\end{example}

  \begin{remark}\label{remark: cPk[w]}
         By definition, $\cDes[w]$ carries the information for all representatives in $[w]$. Moreover, $\cDes w$ together with $|w|$ will be sufficient to determine $\cDes[w]$. 
        In fact, $\cDes[w]$ is simply collecting all cyclic shifts of $\cDes w$ in $[n]$ where $n=|w|$, namely,
        \[
        \cDes[w]=\{\{\,i+\cDes w\mid i\in[n] \,\}\}.
        \]
        Here $i+\cDes w$ is the set defined by (\ref{eq:i+E}).
        Similarly, $\cPk[w]$ can be entirely determined by $\cPk w$ and $|w|$.

    \end{remark}

\begin{example}
Consider the permutation $w=1423$ and the corresponding cyclic permutation $[w]=\{1423,3142,2314,4231\}$, we have 
\[
\cDes [w]=\cPk[w]=\{\{\,\{2,4\},\{1,3\},\{2,4\},\{1,3\}\,\}\}.
\]
Noting that $\cDes w=\cPk w=\{2,4\}$, one can easily check the previous remark does hold.
\end{example} 
    
In the work~\cite{agrr:cqf} of Adin, Gessel, Reiner, and Roichman, the ring $\cQSym$ of cyclic quasi-symmetric functions was introduced from toric $P$-partition enumerators, in which case the cyclic descent set $\cDes$ plays an important role. The authors also asked for a cyclic version of the algebra of peaks, to which question we will give an answer in this paper.

This article is devoted to the study of enriched toric $[\vec{D}]$-partitions. By the end, we will construct an algebra of cyclic peaks in $\cQSym$ analogously as the algebra of peaks. The rest of this paper is structured as follows. In the next section, we recall definitions of various terms such as quasi-symmetric functions and cyclic quasi-symmetric functions, with several concrete examples provided. Section~\ref{ep} introduces enriched $\vec{D}$-partitions in terms of directed acyclic graphs (DAGs). In section~\ref{etp}, we define enriched toric $[\vec{D}]$-partitions and develop some of their properties. Section~\ref{we} will review the weight enumerators of enriched $\vec{D}$-partitions from Stembridge and discuss the cyclic analogies for enriched toric $[\vec{D}]$-partitions. Moreover, the weight enumerators corresponding to different cyclic peak sets generate a subring of $\cQSym$ and we call it the algebra of cyclic peaks. We also compute the order polynomial of enriched toric $[\vec{D}]$-partitions.

\section{Basic definitions and results}\label{bd}

\subsection{Sets and compositions}

 We will use $\leq$ with no subscript to denote the ordinary total order on $\mathbb{Z}$, the set of integers. For $n\in\mathbb{P}$, let $2^{[n]}$ denote the set of all subsets of $[n]$, and $2^{[n]}_0$ be the set of all nonempty subsets of $[n]$. Denote by $\Comp_n$ the set of all compositions of $n$ and write $\alpha\vDash n$ for $\alpha\in\Comp_n$. Define a {\em cyclic shift} of a subset $E\sbe[n]$ in $[n]$ to be a set of the form 
\begin{equation}\label{eq:i+E}
    i+E=\{\text{$i+e$ (mod n)}\mid e\in E\}.
\end{equation}
Note that sometimes we will  use $E+i$ as well for the same concept. While using a negative shift, the reader should be careful to distinguish between $E-i$ and the set difference $E-\{i\}=E\setminus \{i\}$.

A {\em cyclic shift} of a composition $\alpha=(\alpha_1,\alpha_2,\ldots,\alpha_m)$ is a composition of the form
\[
(\alpha_k,\ldots,\alpha_m,\alpha_1,\ldots,\alpha_{k-1})
\]
for some $k\in[m]$.
We adopt the notations from \cite{agrr:cqf} and denote by $c2^{[n]}_0$ (respectively, $\cComp_n$)  the set of equivalence classes of elements of $2^{[n]}_0$ (respectively, $\Comp_n$) under cyclic shifts. 
Here we recall two natural bijections which will play important roles when indexing two particular bases of (cyclic) quasi-symmetric functions.

The first natural bijection is between $2^{[n-1]}$ and $\Comp_n$. The map $\Phi:2^{[n-1]} \to\Comp_n$ is defined by 
\begin{equation}\label{eq:Phi} 
  \Phi(E):=(e_1-e_0,e_2-e_1,\ldots,e_k-e_{k-1},e_{k+1}-e_k)  
\end{equation}
for any given $E=\{e_1<e_2<\dots<e_k\} \sbe[n-1]$ with $e_0=0$ and $e_{k+1}=n$, where the inverse map is
\[
   \Phi^{-1}(\alpha)=\{\alpha_1,\alpha_1+\alpha_2,\ldots,\alpha_1+\alpha_2+\dots+\alpha_k\} 
\] 
for any $\alpha=(\alpha_1,\ldots,\alpha_{k+1})\vDash n$.

Another bijection is between $c2_0^{[n]}$ and $\cComp_n$, for the sake of which we need to consider the map $\psi:2_0^{[n]}\to\Comp_n$ defined by
\begin{equation}\label{eq:psi}
\psi(E):=(e_2-e_1,\ldots,e_k-e_{k-1},e_1-e_k+n)
\end{equation}
where $E=\{e_1<e_2<\dots<e_k\} \sbe[n]$. Notice that if $E'$ is a cyclic shift of $E$ in $[n]$, then $\psi(E')$ is also a cyclic shift of $\psi(E)$.
Therefore $\psi$ induces a map $\Psi:c2_0^{[n]}\to\cComp_n$. Moreover, it is straightforward to check that the induced map $\Psi$ is bijective.

\subsection{Quasi-symmetric functions $\QSym$}

A {\em quasi-symmetric function} is a formal power series $f\in \mathbb{Q}[[x_1,x_2,\ldots]]$ 
such that for any sequence of positive integers $a=(a_1,a_2,\ldots,a_s)$, and two increasing sequences $i_1<i_2<\dots< i_s$ and $j_1<j_2<\dots< j_s$ of positive integers,
\[
[x_{i_1}^{a_1}x_{i_2}^{a_2}\dots x_{i_s}^{a_s}]\,f=[x_{j_1}^{a_1}x_{j_2}^{a_2}\dots x_{j_s}^{a_s}]\,f,
\]
where $[x_{i_1}^{a_1}x_{i_2}^{a_2}\dots x_{i_s}^{a_s}]\,f$ denotes the coefficient of monomial $x_{i_1}^{a_1}x_{i_2}^{a_2}\dots x_{i_s}^{a_s}$ in the expression of $f$. Let $\QSym_n$ be the set of all quasi-symmetric functions which are homogeneous of degree $n$, and $\QSym = \oplus_{n\ge0} \QSym_n$.
Two bases of $\QSym$ are particularly important to our work: monomial quasi-symmetric functions $M_L$ and fundamental quasi-symmetric functions $F_L$.

Given a composition $\alpha=(\alpha_1,\alpha_2,\ldots,\alpha_s)\vDash n$, the associated {\em monomial quasi-symmetric function} indexed by $\alpha$ is
\[
    M_\alpha=\sum_{i_1<i_2<\dots<i_s} x_{i_1}^{\alpha_1}x_{i_2}^{\alpha_2}\dots x_{i_s}^{\alpha_s}. 
\]
It is clear that $\{M_\alpha\}_{\alpha\vDash n}$ form a basis of $\QSym_n$. From the natural bijection $\Phi:2^{[n-1]} \to\Comp_n$ defined by~\eqref{eq:Phi}, we can also index the monomial quasi-symmetric functions by subsets $E\sbe[n-1]$, and define $M_{n,E}:=M_{\Phi(E)}$.

There is another important basis of $\QSym$.  The {\em fundamental quasi-symmetric function} indexed by $E\sbe[n-1]$ is 
\[
    F_{n,E}=\sum_{\substack{i_1\leq\dots\leq i_n\\[4pt] i_k<i_{k+1}\, \text{if $k\in E$}}} x_{i_1}x_{i_2}\dots x_{i_n}. 
\]
Similarly, we can define $F_\alpha:=F_{\Phi^{-1}(\alpha)}$ indexed by compositions.

The relation between monomial and fundamental quasi-symmetric functions is simple: 
\begin{equation}\label{eq:M vs F}
    F_{n,E}=\sum_{L\supseteq E}M_{n,L}.
\end{equation}
By the principle of inclusion and exclusion, 
$M_{n,E}$ can be expressed as a linear combination of the $F_{n,L}$, from which we can tell that $\{F_{n,L}\}_{L\sbe[n-1]}$ spans $\QSym_n$. By checking the cardinality of both sets $\{F_{n,L}\}_{L\sbe[n-1]}$ and $\{M_{n,E}\}_{E\sbe[n-1]}$, it follows that $\{F_{n,L}\}_{L\sbe[n-1]}$ is indeed a basis of $\QSym_n$.

\begin{example}
Consider $E=\{1,3\}$, by definition we have 
\[
F_{4,\{1,3\}}=\sum_{i_1<i_2\leq i_3<i_4} x_{i_1}x_{i_2}x_{i_3}x_{i_4}=\sum\limits_{i_1<i_2< i_3<i_4} x_{i_1}x_{i_2}x_{i_3}x_{i_4}+\sum_{i_1<i_2<i_4} x_{i_1}x_{i_2}^2x_{i_4}.
\]
There are only two choices for a set $L$ satisfying that $E\sbe L\sbe[3]$: $\{1,3\}$ or $\{1,2,3\}$. Since $\Phi(\{1,3\})=(1,2,1),\Phi(\{1,2,3\})=(1,1,1,1)\,$, we get
\[
M_{4,\{1,3\}}=M_{(1,2,1)}=\sum_{i_1<i_2<i_3} x_{i_1}x_{i_2}^2x_{i_3},\hs{5pt} M_{4,\{1,2,3\}}=M_{(1,1,1,1)}=\sum_{i_1<i_2< i_3<i_4} x_{i_1}x_{i_2}x_{i_3}x_{i_4}.
\]
The calculation above verifies that $F_{4,\{1,3\}}=M_{4,\{1,3\}}+M_{4,\{1,2,3\}}=\sum\limits_{L\supseteq \{1,3\}}M_{4,L}$.
\end{example}

\subsection{Cyclic quasi-symmetric functions $\cQSym$}

In this subsection, we recall from~\cite{agrr:cqf} the theory of cyclic quasi-symmetric functions. We will model our work of enriched toric $[\vec{D}]$-partitions with enumerators in this environment. 

A {\em cyclic quasi-symmetric function} is a formal power series $f\in \mathbb{Q}[[x_1,x_2,\ldots]]$ 
such that for any sequence of positive integers $a=(a_1,a_2,\ldots,a_s)$, a cyclic shift $(a'_1,a'_2,\ldots,a'_s)$ of $a$, and two increasing sequences $i_1<i_2<\dots <i_s$ and $j_1<j_2<\dots <j_s$ of positive integers, 

\[
[x_{i_1}^{a_1}x_{i_2}^{a_2}\dots x_{i_s}^{a_s}]\,f=[x_{j_1}^{a'_1}x_{j_2}^{a'_2}\dots x_{j_s}^{a'_s}]\,f,
\]
namely the coefficients of $x_{i_1}^{a_1}x_{i_2}^{a_2}\dots x_{i_s}^{a_s}$ and $x_{j_1}^{a'_1}x_{j_2}^{a'_2}\dots x_{j_s}^{a'_s}$ in $f$ are equal. Denote by $\cQSym_n$ the set of all cyclic quasi-symmetric functions which are homogeneous of degree $n$, and $\cQSym = \oplus_{n\ge0} \cQSym_n$.

\begin{remark}
It is clear that there exists a strict inclusion relation $\sym\subsetneq\cQSym\subsetneq\QSym$, where $\sym$ is the algebra of symmetric functions.
\end{remark}

We have the following cyclic analogues of the concepts of monomial (fundamental) quasi-symmetric functions.

Given a composition $\alpha=(\alpha_1,\alpha_2,\ldots,\alpha_s)\vDash n$, the associated {\em monomial cyclic quasi-symmetric function} indexed by $\alpha$ is
\[
    M^{\cyc}_\alpha=\sum_{i=1}^{s} M_{(\alpha_i,\alpha_{i+1},\ldots,\alpha_{i-1})},
\]
where the indices are interpreted modulo 
$s$, meaning $\alpha_{j}=\alpha_{j+s}$. In other words, $M^{\cyc}_\alpha$ sums over all monomial quasi-symmetric functions indexed by cyclic shifts of $\alpha$. Therefore it is clear that $M^{\cyc}_\alpha=M^{\cyc}_{\alpha'}$ if $\alpha$ and $\alpha'$ only differ by a cyclic shift.

We can also index the monomial cyclic quasi-symmetric function by sets. For a nonempty $E\sbe[n]$, define $M^{\cyc}_{n,E}:=M^{\cyc}_{\psi(E)}$ via the map $\psi:2_0^{[n]}\to\Comp_n$ defined by~\eqref{eq:psi}, and set $M^{\cyc}_{n,\emptyset}:=0$. Similarly it can be shown that $M^{\cyc}_{n,E}=M^{\cyc}_{n,E'}$ if $E'$ is a cyclic shift of $E$.

The following result gives the expression of monomial cyclic quasi-symmetric functions in terms of monimial quasi-symmetric functions.

\begin{lem}[\cite{agrr:cqf} Lemma 2.5, monomial to cyclic monomial]
For any subset $E\sbe[n]$
\begin{equation}\label{eq:monomial to cyclic monomial}
M^{\cyc}_{n,E}=\sum_{e\in E} M_{n,(E-e)\cap[n-1]},
\end{equation}
where the set $E-e$ is defined as~\eqref{eq:i+E}.
\qed
\end{lem}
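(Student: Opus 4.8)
The plan is to unwind both sides to the level of ordinary monomial quasi-symmetric functions $M_\beta$ and then match the two sums term by term. By definition the left-hand side is $M^{\cyc}_{n,E}=M^{\cyc}_{\psi(E)}$, which by the definition of $M^{\cyc}_\alpha$ equals the sum of the $M_\beta$ as $\beta$ ranges over the $k:=|E|$ cyclic shifts of $\alpha:=\psi(E)$. The right-hand side is a sum of $k$ terms, one for each $e\in E$, namely $M_{n,(E-e)\cap[n-1]}=M_{\Phi((E-e)\cap[n-1])}$ (note $(E-e)\cap[n-1]\subseteq[n-1]$, so $\Phi$ applies). Hence it suffices to show that, writing $E=\{e_1<e_2<\cdots<e_k\}$, the assignment $e_j\mapsto\Phi\big((E-e_j)\cap[n-1]\big)$ produces exactly the $j$-th cyclic shift $(\alpha_j,\alpha_{j+1},\ldots,\alpha_{j-1})$ of $\alpha$ (indices mod $k$); summing over $j$ then gives the identity.

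First I would dispose of the trivial case $E=\emptyset$, where both sides vanish by convention. For $E=\{e_1<\cdots<e_k\}$ with $k\ge1$, I would compute $(E-e_j)\cap[n-1]$ explicitly. Since $e_j\in E$, the element $e_j-e_j$ reduces mod $n$ to the unique element of $E-e_j$ lying outside $[n-1]$ (namely $0$, or $n$, according to the chosen system of residues), and intersecting with $[n-1]$ discards exactly it; the surviving elements are $e_i-e_j$ for $i>j$ and $e_i-e_j+n$ for $i<j$, all in $\{1,\ldots,n-1\}$, and since $e_k-e_j<e_1+n-e_j$ they sort as
\[
(E-e_j)\cap[n-1]=\{\,e_{j+1}-e_j<\cdots<e_k-e_j<e_1-e_j+n<\cdots<e_{j-1}-e_j+n\,\},
\]
a $(k-1)$-element subset of $[n-1]$. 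Applying $\Phi$ and reading off successive differences, the first part is $e_{j+1}-e_j=\alpha_j$, the interior differences within the two blocks give $\alpha_{j+1},\ldots,\alpha_{k-1}$ and then $\alpha_1,\ldots,\alpha_{j-2}$, the ``seam'' between the blocks contributes $(e_1-e_j+n)-(e_k-e_j)=e_1+n-e_k=\alpha_k$, and the last part is $n-(e_{j-1}-e_j+n)=e_j-e_{j-1}=\alpha_{j-1}$. Thus $\Phi\big((E-e_j)\cap[n-1]\big)=(\alpha_j,\alpha_{j+1},\ldots,\alpha_{j-1})$, as claimed (the boundary cases $j=1$ and $j=k$, where one of the two blocks is empty, go through in the same way).

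Summing over $j=1,\ldots,k$, i.e.\ over $e=e_j\in E$, then yields $\sum_{e\in E}M_{n,(E-e)\cap[n-1]}=\sum_{j=1}^{k}M_{(\alpha_j,\ldots,\alpha_{j-1})}=M^{\cyc}_{\alpha}=M^{\cyc}_{n,E}$. The only real obstacle is bookkeeping: one must keep the mod-$n$ reduction consistent so that wrapped representatives land in $\{1,\ldots,n-1\}$ and the intersection with $[n-1]$ deletes precisely the single spurious element coming from $e_j$, and one must track indices carefully through $\psi$ and $\Phi$ to avoid off-by-one slips. There is no conceptual difficulty beyond this; the case $|E|=1$, where both sides equal $M_{(n)}$, is a convenient sanity check.
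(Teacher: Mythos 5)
Your proposal is correct and complete: the computation of $(E-e_j)\cap[n-1]$, the identification of its image under $\Phi$ with the $j$-th cyclic shift of $\psi(E)$ (including the seam term $e_1-e_k+n=\alpha_k$ and the boundary cases $j=1$, $j=k$, $|E|\le 1$), and the term-by-term matching all check out, and the correspondence $e_j\mapsto$ ($j$-th cyclic shift) correctly accounts for multiplicities when $\alpha$ has cyclic symmetry. The paper itself states this lemma with a citation and no proof, and your argument is exactly the standard direct verification one would give.
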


\begin{example}
Table~\ref{table: cyclic M} computes all monomial cyclic quasi-symmetric function indexed by compositions of $4$, in terms of monomial quasi-symmetric functions.
\begin{table}
\caption{Monomial cyclic quasi-symmetric functions indexed by compositions of $4$}
    \label{table: cyclic M}
    $$
    \begin{array}{|c|c|}
    \hline
        \alpha\vDash 4 &  M^{\cyc}_\alpha\\
        \hline \hline
        (4) & M_{(4)}=M_{4,\emptyset}\\
        \hline
        \text{(1,3) or (3,1)}  & M_{(1,3)}+M_{(3,1)}=M_{4,\{1\}}+M_{4,\{3\}}\\
        \hline
        (2,2)&
        2M_{(2,2)}=2M_{4,\{2\}}\\
        \hline
        \text{(1,1,2) or (1,2,1) or (2,1,1)} & M_{(1,1,2)}+M_{(1,2,1)}+M_{(2,1,1)}=M_{4,\{1,2\}}+M_{4,\{1,3\}}+M_{4,\{2,3\}}\\
        \hline
        (1,1,1,1) & 4M_{(1,1,1,1)}=4M_{4,\{1,2,3\}}\\
        \hline
    \end{array}
    $$
\end{table}
\end{example}

For the natural desire of establishing a similar relation as the one between monomial and fundamental quasi-symmetric functions given by~\eqref{eq:M vs F} in our cyclic situation, define the {\em fundamental cyclic quasi-symmetric function} indexed by $E\sbe[n]$ as

\begin{equation}\label{relation between F and M}
F_{n,E}^{\cyc}:=\sum_{L\supseteq E}M_{n,L}^{\cyc}.    
\end{equation}

\begin{remark}
This is not the original definition in~\cite{agrr:cqf} but appears as a lemma in the same article. But these two definitions are equivalent via~\cite{agrr:cqf} Lemma 2.14. We will use this definition for the purpose of our work, and mention the original definition in Proposition~\ref{orginal def of cyclic} for interested readers.
\end{remark}

\begin{example}\label{ex:cyclic F}
Consider $n=4$ and $E=\{1,3\}$. By definition
\begin{align*}
F_{4,\{1,3\}}^{\cyc}=&\quad\sum_{L\supseteq \{1,3\}}M_{4,L}^{\cyc}\\
\overset{(i)}{=}& \quad M_{4,\{1,3\}}^{\cyc}+M_{4,\{1,2,3\}}^{\cyc}+M_{4,\{1,3,4\}}^{\cyc}+M_{4,\{1,2,3,4\}}^{\cyc}\\
\overset{(ii)}{=}&\quad M_{(2,2)}^{\cyc}+M_{(1,1,2)}^{\cyc}+M_{(2,1,1)}^{\cyc}+M_{(1,1,1,1)}^{\cyc}  \\
\overset{(iii)}{=}&\quad 2M_{(2,2)}+2\left(M_{(1,1,2)}+M_{(1,2,1)}+M_{(2,1,1)})\right)+4M_{(1,1,1,1)}\\
=&\quad 2\sum_{i_1<i_2}x_{i_1}^2x_{i_2}^2 \\
&\quad+2\sum_{i_1<i_2< i_3}(x_{i_1}x_{i_2}x_{i_3}^2+x_{i_1}x_{i_2}^2x_{i_3}+x_{i_1}^2x_{i_2}x_{i_3})\\
&\quad+4\sum_{i_1<i_2< i_3<i_4}x_{i_1}x_{i_2}x_{i_3} x_{i_4}.
\end{align*}
Equality $(i)$ follows from the fact that the choices for $L\supseteq \{1,3\}$ in $[4]$ are $\{1,3\}$, $\{1,2,3\}$, $\{1,3,4\}$ and $\{1,2,3,4\}$. Equality $(ii)$ is obtained by changing indices under the map $\psi$ defined by~\eqref{eq:psi}, equality (iii) is from Table~\ref{table: cyclic M}.
\end{example}


The following passing from fundamental to cyclic fundamental quasi-symmetric functions should come without surprise.

\begin{lem}[\cite{agrr:cqf} Proposition 2.15, fundamental to cyclic fundamental]\label{lem: F to cylic F}
For any subset $E\sbe[n]$,
\[
F^{\cyc}_{n,E}=\sum_{i\in [n]} F_{n,(E-i)\cap[n-1]},
\]
with set $E-i$ defined by~\eqref{eq:i+E}.
\qed
\end{lem}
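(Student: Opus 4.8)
The plan is to derive the formula for $F^{\cyc}_{n,E}$ directly from its definition~\eqref{relation between F and M} as $\sum_{L\supseteq E}M^{\cyc}_{n,L}$ together with the already-established monomial-to-cyclic-monomial expansion~\eqref{eq:monomial to cyclic monomial}. Substituting the latter into the former gives
\[
F^{\cyc}_{n,E}=\sum_{L\supseteq E}\sum_{e\in L}M_{n,(L-e)\cap[n-1]},
\]
so the task reduces to reorganizing this double sum so that it matches $\sum_{i\in[n]}F_{n,(E-i)\cap[n-1]}=\sum_{i\in[n]}\sum_{K\supseteq (E-i)\cap[n-1]}M_{n,K}$, the right-hand side after expanding each fundamental quasi-symmetric function via~\eqref{eq:M vs F}.

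The key combinatorial step is a change of summation variables. On the left side, instead of summing over pairs $(L,e)$ with $L\supseteq E$ and $e\in L$, I would reparametrize by the \emph{shift amount}: write $e$ as the element being rotated away, and think of $L-e$ as a cyclic shift. Concretely, for each $i\in[n]$ and each subset $K\subseteq[n-1]$, I want to count how many pairs $(L,e)$ produce the term $M_{n,K}$ with $e$ playing the role of the $i$-th shift; the claim is that the pair $(L,e)$ contributes exactly to the index $i=e$ (after reduction mod $n$) with $(L-e)\cap[n-1]=K$, and the condition $L\supseteq E$ translates into $K\cup\{0\}\supseteq E-e$, i.e. $K\supseteq (E-e)\cap[n-1]$ once one accounts for the wraparound element. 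Thus the pairs $(L,e)$ with $e\equiv i$ are in bijection with sets $K\supseteq(E-i)\cap[n-1]$, and summing over $i\in[n]$ reassembles exactly $\sum_{i\in[n]}F_{n,(E-i)\cap[n-1]}$.

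The main obstacle I anticipate is bookkeeping the modular arithmetic and the ``$\cap[n-1]$'' truncations carefully, in particular tracking what happens to the element $n$ (equivalently $0$) under the cyclic shifts $L\mapsto L-e$: an element of $L$ can shift to $0\pmod n$, which is then discarded by intersecting with $[n-1]$, and I must check this discarding is consistent on both sides of the identity. A clean way to handle this is to first prove the statement entirely at the level of cyclic shifts of sets — noting that $\psi$ and $\Psi$ intertwine cyclic shifts of subsets of $[n]$ with cyclic shifts of compositions of $n$ — and only at the end translate back through $\Phi$ to the subset-of-$[n-1]$ indexing; alternatively one can verify it directly on $M$-coefficients as above, which avoids invoking linear independence of the $F_{n,L}$ but requires the index juggling to be airtight.

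Since this lemma is quoted from~\cite{agrr:cqf} as Proposition 2.15, an acceptable alternative is simply to cite it; but the self-contained derivation above, swapping the order of the double sum $\sum_{L\supseteq E}\sum_{e\in L}$ and recognizing the inner structure as a sum of fundamentals indexed by cyclic shifts of $E$, is short and I would include it for completeness.
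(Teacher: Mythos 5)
The paper itself offers no argument for this lemma---it is stated with a \qed\ as a citation of \cite{agrr:cqf}, Proposition 2.15---so your self-contained derivation is a genuine addition rather than a rederivation, and it is correct. The heart of your argument, once the hedging is stripped away, is the reindexing of the double sum
\[
F^{\cyc}_{n,E}=\sum_{\substack{L\subseteq[n]\\ L\supseteq E}}\sum_{e\in L}M_{n,(L-e)\cap[n-1]}
\]
by the pair $(i,K)=(e,\,(L-e)\cap[n-1])$. This is a bijection onto pairs with $i\in[n]$ and $(E-i)\cap[n-1]\subseteq K\subseteq[n-1]$: since $e\in L$, the shifted set $L-e$ always contains $n$, so $L-e=K\cup\{n\}$ and $L=(K+i)\cup\{i\}$ is recovered uniquely from $(i,K)$; moreover $E\subseteq L$ holds if and only if $E-e\subseteq K\cup\{n\}$, i.e.\ $(E-i)\cap[n-1]\subseteq K$, because any element of $E-e$ equal to $n$ is absorbed automatically. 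Summing $M_{n,K}$ over $K\supseteq(E-i)\cap[n-1]$ for each fixed $i$ gives $F_{n,(E-i)\cap[n-1]}$ by \eqref{eq:M vs F}, which is exactly the right-hand side. Two small points of care: in the paper's convention \eqref{eq:i+E} residues are taken in $[n]$, so the wraparound element you call $0$ should be written as $n$; and the edge case $E=\emptyset$ is handled consistently because $M^{\cyc}_{n,\emptyset}=0$ contributes nothing on the left while the right-hand side correctly becomes $n\,F_{n,\emptyset}$. You do not need linear independence of the $F_{n,L}$ anywhere---the identity is a termwise rearrangement of formal sums---nor the detour through $\psi$ and $\Psi$; the direct $M$-coefficient computation suffices and is airtight as sketched.
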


\begin{remark}
\hfill
\begin{enumerate}
    \item It follows directly from Lemma~\ref{lem: F to cylic F} that $F^{\cyc}_{n,E}=F^{\cyc}_{n,E'}$ if $E'$ is a cyclic shift of $E$.
    \item Clearly the set $\{M^{\cyc}_{n,E}: E\in c2^{[n]}_0\}$ spans $\cQSym_n$ and is linearly independent, as each monomial of degree $n$ appears in $M^{\cyc}_{n,E}$ for exactly one $E\in c2^{[n]}_0$. Hence $\{M^{\cyc}_{n,E}: E\in c2^{[n]}_0\}$ is a basis of $\cQSym_n$. Applying the principle of inclusion and exclusion on~\eqref{relation between F and M} we have
    $$M^{\cyc}_{n,E}=\sum_{L\supseteq E}(-1)^{|L\setminus E|}F^{\cyc}_{n,L}, $$
    which implies that $\{F^{\cyc}_{n,E}: E\in c2^{[n]}_0\}$ also spans $\cQSym_n$, 
    together with the fact that the dimension of vector space $\cQSym_n$ is $\#c2^{[n]}_0$, $\{F^{\cyc}_{n,E}: E\in c2^{[n]}_0\}$ is also a basis of $\cQSym_n$.
\end{enumerate}

\end{remark}


Now we review the original definition of cyclic quasi-symmetric functions. Before that, we need to associate each $E\sbe[n]$ with a set $P_{n,E}^{\cyc}$.

\begin{definition}
Let $P_{n,E}^{\cyc}$ denote the set of all pairs $(w,k)$ where $w=w_1\ldots w_n$ a sequence of positive integers and  $k\in[n]$ satisfying 
\begin{enumerate}
    \item[(1)] $w$ is ``cyclically weakly increasing" from index $k$, 
    that is, $w_k\leq w_{k+1}\leq\ldots\leq w_n\leq w_1\leq\ldots\leq w_{k-1}$.
    \item[(2)] If $i\in E\setminus\{k-1\}$, then $w_i<w_{i+1}$, where the indices are computed modulo $n$. 
\end{enumerate}
\end{definition}

\begin{remark}
The index $k$ is uniquely defined by $w$ unless all integers in $w$ are the same. In that case, it must be either $E=\{k-1\}$ for some $k\in[n]$ or $E=\emptyset$. Those $w$ with elements all equal will only be paired with $k-1$ if $E=\{k-1\}$, and so get counted once in $P_{n,E}^{\cyc}$; as for $E=\emptyset$, if $w$ has all elements equal, it can pair with every $k\in[n]$, 
therefore it will be counted $n$ times in $P_{n,E}^{\cyc}$.
\end{remark}

\begin{prop}\label{orginal def of cyclic}
For every subset $E\sbe[n]$,
\[
F_{n,E}^{\cyc}=\sum_{(w,k)\in P_{n,E}^{\cyc}}x_{w_1}x_{w_2}\dots x_{w_n}.
\]
\end{prop}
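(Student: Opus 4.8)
The plan is to show that the right-hand side, call it $G_{n,E} := \sum_{(w,k)\in P_{n,E}^{\cyc}}x_{w_1}\cdots x_{w_n}$, equals $\sum_{i\in[n]} F_{n,(E-i)\cap[n-1]}$, and then invoke Lemma~\ref{lem: F to cylic F}. The natural bridge is the classical combinatorial description of the ordinary fundamental quasi-symmetric function: for $D\sbe[n-1]$,
\[
F_{n,D}=\sum_{\substack{u=u_1\cdots u_n\\ u_1\leq\cdots\leq u_n,\ u_i<u_{i+1}\text{ if }i\in D}}x_{u_1}\cdots x_{u_n},
\]
which is just the original displayed definition of $F_{n,D}$ rewritten with the substitution $u_i = i_{w^{-1}(i)}$ (equivalently, $F_{n,D}$ counts weakly increasing words with strict ascents forced at positions in $D$). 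So the strategy is to partition $P_{n,E}^{\cyc}$ according to the value of $k$, and to show that for each fixed $k$ the words $w$ appearing in pairs $(w,k)$, read cyclically starting from position $k$, are exactly the weakly increasing words counted by $F_{n,(E-(k-1))\cap[n-1]}$ — up to the one subtlety about constant words.

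Here are the steps in order. First, fix $k\in[n]$ and consider the rotation map $\rho_k$ sending $w=w_1\cdots w_n$ to $u=w_k w_{k+1}\cdots w_n w_1\cdots w_{k-1}$; this is a bijection on length-$n$ words of positive integers and preserves the monomial $x_{w_1}\cdots x_{w_n}=x_{u_1}\cdots x_{u_n}$. Second, translate the two defining conditions of $P_{n,E}^{\cyc}$ through $\rho_k$: condition (1) says precisely that $u$ is weakly increasing, and condition (2) — that $w_i<w_{i+1}$ for $i\in E\setminus\{k-1\}$ with indices mod $n$ — becomes, after re-indexing, the condition that $u_j < u_{j+1}$ for every $j$ in the shifted set. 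One checks that the position $i$ (in $w$-coordinates) corresponds to the position $j = i-(k-1) \bmod n$ (in $u$-coordinates), and that removing $k-1$ from $E$ corresponds exactly to removing $0\equiv n$ from $E-(k-1)$, i.e. intersecting with $[n-1]$; so the forced-ascent set on $u$ is exactly $D_k := (E-(k-1))\cap[n-1]$. Third, conclude that for fixed $k$, $\rho_k$ gives a bijection from $\{w : (w,k)\in P_{n,E}^{\cyc}\}$ onto the set of weakly increasing words with forced ascents at $D_k$, hence $\sum_{(w,k)\in P_{n,E}^{\cyc}}x_w = F_{n,D_k}$, and summing over $k\in[n]$ and noting that $D_k$ ranges over exactly the sets $(E-i)\cap[n-1]$ for $i\in[n]$ (via $i=k-1$) matches Lemma~\ref{lem: F to cylic F}.

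The one genuine obstacle is the overcounting/undercounting bookkeeping for constant words, which is exactly what the Remark before the Proposition is flagging. When $w$ has all entries equal, $\rho_k(w)=w$ is weakly increasing for every $k$, so naively it would be counted $n$ times on the right side in total — once in each $F_{n,D_k}$. I need to check that the pairing rule in the definition of $P_{n,E}^{\cyc}$ reproduces exactly this multiplicity: if $E=\emptyset$ then every $(w,k)$ with $w$ constant lies in $P_{n,\emptyset}^{\cyc}$, giving multiplicity $n$, matching $\sum_{k}F_{n,\emptyset}$ (each summand contains the constant monomial once); if $E=\{k_0-1\}$ is a singleton, a constant $w$ satisfies condition (2) vacuously and condition (1) forces $k=k_0$, so it is counted once — and correspondingly on the right side $D_k = (\{k_0-1\}-(k-1))\cap[n-1]$ is empty exactly when $k=k_0$ and is a nonempty singleton otherwise, so the constant monomial survives in precisely one of the $n$ fundamental functions. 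For $|E|\geq 2$ the set $D_k$ is never empty, so constant words appear on neither side and there is nothing to check. Assembling these cases carefully — ideally by first disposing of the constant-word contribution separately and then running the clean $\rho_k$ bijection on non-constant words, where $k$ is uniquely determined — completes the proof. Everything else is a routine modular-arithmetic verification that $\rho_k$ matches the conditions.
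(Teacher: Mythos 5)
Your argument is correct in substance, but it takes a different route from the one the paper intends. The paper's ``proof'' is a citation: it declares the argument to be the same as that of Lemma~2.14 in \cite{agrr:cqf}, which expands the sum over $P_{n,E}^{\cyc}$ directly into monomial cyclic quasi-symmetric functions (grouping the pairs $(w,k)$ by the level sets of $w$, one gets $\sum_{L\supseteq E}M^{\cyc}_{n,L}$, which is this paper's definition of $F^{\cyc}_{n,E}$). You instead partition $P_{n,E}^{\cyc}$ by the index $k$, rotate each word to start at position $k$, identify the fixed-$k$ contribution with the ordinary fundamental $F_{n,(E-(k-1))\cap[n-1]}$, and then close with Lemma~\ref{lem: F to cylic F}. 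The rotation bookkeeping is right ($i\in E\setminus\{k-1\}$ corresponds exactly to $j=i-(k-1)\bmod n\in[n-1]$, so the forced-ascent set is $(E-(k-1))\cap[n-1]$), and in fact your per-$k$ bijection already handles constant words automatically: a constant $w$ is paired with $k$ iff $E\subseteq\{k-1\}$ iff $D_k=\emptyset$ iff the constant monomials survive in $F_{n,D_k}$, so the separate case analysis, though harmless, is not needed. What your route buys is an argument at the level of fundamentals rather than monomials; what it costs is dependence on Lemma~\ref{lem: F to cylic F}, which this paper only quotes from \cite{agrr:cqf}, where the corresponding statement is proved from the $P^{\cyc}$-based definition of $F^{\cyc}_{n,E}$ --- essentially the equivalence you are trying to establish. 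So, traced back to its source, your proof risks circularity; it becomes genuinely independent only if you also verify Lemma~\ref{lem: F to cylic F} directly from the definition $F^{\cyc}_{n,E}=\sum_{L\supseteq E}M^{\cyc}_{n,L}$ (a short computation using \eqref{eq:monomial to cyclic monomial} and $F_{n,D}=\sum_{L'\supseteq D}M_{n,L'}$), or else argue as the paper does via the monomial expansion.

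Two small corrections to your constant-word discussion: for a constant word condition (1) holds for \emph{every} $k$, and it is condition (2) that forces $k=k_0$ when $E=\{k_0-1\}$ (for $k\neq k_0$ the required strict inequality at position $k_0-1\in E\setminus\{k-1\}$ fails); you have the roles of the two conditions swapped, though your final count is correct. Also, when $k=1$ the excluded index $k-1$ must be read modulo $n$ as $n$, which is exactly what makes the intersection with $[n-1]$ the right operation.
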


\begin{proof}
Despite the statement, the proof is essentially the same as for~\cite{agrr:cqf} Lemma 2.14.
\end{proof}

Here we provide a concrete example to illustrate the validity of the proposition above.
\begin{example}
Consider $n=4$ and $E=\{1,3\}$. It is clear that $P_{n,E}^{\cyc}$ has a natural partition into four parts where each part contains all pairs with a given index $k\in[4]$. This implies the following summation
\begin{align*}
    \sum_{(w,k)\in P_{4,\{1,3\}}^{\cyc}}x_{w_1}x_{w_2}x_{w_3}x_{w_4}&=\sum_{k=1}^4\sum_{(w,k)\in P_{4,\{1,3\}}^{\cyc}}x_{w_1}x_{w_2}x_{w_3}x_{w_4}\\ 
    &=\sum_{w_1<w_2\leq w_3<w_4}x_{w_1}x_{w_2}x_{w_3} x_{w_4}+\sum_{w_2\leq w_3<w_4\leq w_1}x_{w_1}x_{w_2}x_{w_3} x_{w_4}\\
    &\qquad +\sum_{w_3<w_4\leq w_1<w_2}x_{w_1}x_{w_2}x_{w_3} x_{w_4}+\sum_{w_4\leq w_1<w_2\leq w_3}x_{w_1}x_{w_2}x_{w_3} x_{w_4},
\end{align*}
For each summand,
\begin{align*}
\begin{split}
\sum_{w_1<w_2\leq w_3<w_4}x_{w_1}x_{w_2}x_{w_3} x_{w_4}&=\sum_{w_1<w_2< w_3<w_4}x_{w_1}x_{w_2}x_{w_3} x_{w_4}+\sum_{w_1<w_2= w_3<w_4}x_{w_1}x_{w_2}x_{w_3} x_{w_4}\\
&=\sum_{w_1<w_2< w_3<w_4}x_{w_1}x_{w_2}x_{w_3} x_{w_4}+\sum_{w_1<w_2<w_3}x_{w_1}x_{w_2}^2 x_{w_3};
\end{split}
\\[2ex]
\begin{split}
\sum_{w_2\leq w_3<w_4\leq w_1}x_{w_1}x_{w_2}x_{w_3} x_{w_4}&=\sum_{w_2<w_3< w_4<w_1}x_{w_2}x_{w_3}x_{w_4} x_{w_1}+\sum_{w_2= w_3<w_4< w_1}x_{w_2}x_{w_3}x_{w_4}x_{w_1}\\
&\qquad+\sum_{w_2< w_3<w_4= w_1}x_{w_2}x_{w_3}x_{w_4}x_{w_1}+\sum_{w_2= w_3<w_4= w_1}x_{w_2}x_{w_3}x_{w_4}x_{w_1}\\
&=\sum_{i_1<i_2< i_3<i_4}x_{i_1}x_{i_2}x_{i_3} x_{i_4}+\sum_{i_1<i_2< i_3}x_{i_1}^2x_{i_2}x_{i_3}+\sum_{i_1<i_2< i_3}x_{i_1}x_{i_2}x_{i_3}^2+\sum_{i_1<i_2}x_{i_1}^2x_{i_2}^2;
\end{split}
\\[2ex]
\begin{split}
    \sum_{w_3<w_4\leq w_1<w_2}x_{w_1}x_{w_2}x_{w_3} x_{w_4}&=\sum_{w_3<w_4< w_1<w_2}x_{w_3} x_{w_4}x_{w_1}x_{w_2}+\sum_{w_3<w_4= w_1<w_2}x_{w_3} x_{w_4}x_{w_1}x_{w_2}\\
&=\sum_{i_1<i_2< i_3<i_4}x_{i_1}x_{i_2}x_{i_3} x_{i_4}+\sum_{w_1<w_2<w_3}x_{w_1}x_{w_2}^2 x_{w_3};
\end{split}
\\[2ex]
\begin{split}
\sum_{w_4\leq w_1<w_2\leq w_3}x_{w_1}x_{w_2}x_{w_3} x_{w_4}&=\sum_{w_4<w_1< w_2<w_3}x_{w_4}x_{w_1}x_{w_2} x_{w_3}+\sum_{w_4= w_1<w_2< w_3}x_{w_4}x_{w_1}x_{w_2} x_{w_3}\\
&\qquad+\sum_{w_4< w_1<w_2= w_3}x_{w_4}x_{w_1}x_{w_2} x_{w_3}+\sum_{w_4= w_1<w_2= w_3}x_{w_4}x_{w_1}x_{w_2} x_{w_3}\\
&=\sum_{i_1<i_2< i_3<i_4}x_{i_1}x_{i_2}x_{i_3} x_{i_4}+\sum_{i_1<i_2< i_3}x_{i_1}^2x_{i_2}x_{i_3}+\sum_{i_1<i_2< i_3}x_{i_1}x_{i_2}x_{i_3}^2+\sum_{i_1<i_2}x_{i_1}^2x_{i_2}^2.
\end{split}
\end{align*}
To sum up, we have
\begin{equation}\label{ex: F1,3}
\begin{aligned}
    \sum_{(w,k)\in P_{4,\{1,3\}}^{\cyc}}x_{w_1}x_{w_2}x_{w_3}x_{w_4}&= 4\sum_{i_1<i_2< i_3<i_4}x_{i_1}x_{i_2}x_{i_3} x_{i_4} \\
    &\qquad+2\sum_{i_1<i_2< i_3}(x_{i_1}^2x_{i_2}x_{i_3}+x_{i_1}x_{i_2}^2x_{i_3}+x_{i_1}x_{i_2}x_{i_3}^2)\\
    &\qquad+2\sum_{i_1<i_2}x_{i_1}^2x_{i_2}^2,
\end{aligned}
\end{equation}
which is exactly the expression of $F_{4,\{1,3\}}^{\cyc}$ we obtained in Example~\ref{ex:cyclic F}.
\end{example}

\section{Enriched $\vec{D}$-partitions for DAGs}\label{ep}
In \cite{S:epp}, Stembridge defined the enriched $P$-partition of a poset $P$. Different from having $\mathbb{P}$ with the ordinary order $\leq$ 
as range for ordinary $P$-partitions, enriched $P$-partitions are obtained by imposing another total order on the range, $\mathbb{P'}$, which is the set of nonzero integers in an unusual order. We review the basic theory from Stembridge and naturally extend enriched $P$-partitions to enriched $\vec{D}$-partitions where $\vec{D}$ is a directed acyclic graph which is not necessarily transitive.

A {\em directed acyclic graph} (DAG) is a digraph with no directed cycles. Suppose $\vec{D}$ is a DAG with vertex set $[n]$. A DAG $\vec{D}$ is {\em transitive} if $i\to j$ and $j\to k$ implies $i\to k$ for $i,j,k\in[n]$. 
If $\vec{P}$ is a transitive DAG, 
it will naturally induce a partial order $<_{\vec{P}}$ on the vertex set $[n]$, defined so that for two vertices $i$ and $j$, one has $i<_{\vec{P}}j$ if and only if  $i\to j$ in $\vec{P}$; in that case, we also use $\vec{P}$ to denote this poset. 

In general, we can associate a partial order with any DAG $\vec{D}$. For this purpose, define the {\em transitive closure} $\vec{P}$ of $\vec{D}$ as the directed graph obtained from $\vec{D}$ by adding in $i\to k$ if one has both $i\to j$ and $j\to k$ in $\vec{D}$. Such $\vec{P}$ is unique. Moreover, it is straightforward to verify that the transitive closure $\vec{P}$ is both acyclic and transitive, where the acyclicity is ensured by the acyclic condition of $\vec{D}$ and the transitivity comes from the added conditions. This implies that $\vec{P}$ is actually a transitive DAG, hence has a partial order structure. We will maintain the use $\to$ for the relation between vertices in a general DAG $\vec{D}$, while using the partial order $\leq_{\vec{P}}$ 
on a poset $\vec{P}$.

A poset $\vec{P}$ is a {\em total linear order} if it is a complete DAG, i.e., there is a directed edge between every pair of vertices in $\vec{P}$. There is a bijection between the set of total linear orders $\vec{P}$ on the vertex set $[n]$ and $\mathcal{S}_n$. 
For a total linear order $\vec{P}$ on $[n]$, there exists a unique directed path $w_1\to w_2\to\dots\to w_n$ in $\vec{P}$, hence $\vec{P}$ can be identified with the permutation $w=w_1 w_2\dots w_n\in \mathcal{S}_n$. Conversely, given a permutation $w=w_1 w_2\dots w_n\in\mathcal{S}_n$, we can construct a DAG $\vec{P}$ by putting arrows $w_i\to w_j$ for all $1\leq i<j\leq n$ on the vertex set $[n]$, and it is easy to check that the resulting DAG $\vec{P}$ is actually a total linear order. In this case, we usually use a permutation $w$ to denote the corresponding total linear order $\vec{P}$.

For two DAGs $\vec{D}_1$ and $\vec{D}_2$ on the same vertex set $[n]$, we say $\vec{D}_2$ {\em extends} $\vec{D}_1$ if $\vec{D}_1$ is a subgraph of $\vec{D}_2$, written as $\vec{D}_1\sbe\vec{D}_2$. If, furthermore, $\vec{D}_2$ is a total linear order corresponding to the permutation $w\in\mathcal{S}_n$, we also say that $w$ {\em linearly extends} $\vec{D}_1$. Denote by $\mathcal{L}(\vec{D})$ the set of all permutations $w\in\mathcal{S}_n$ which linearly extend $\vec{D}$.

\begin{example} \label{DAG on [4]}
Here are several DAGs on the vertex set $[4]=\{1,2,3,4\}$.

\begin{center}
        \begin{tikzpicture}[scale=1.1, yshift=10in]
        \draw[-{Stealth[scale=1.0]}] (0.2,0.2) -- (1.8,1.8);
        \draw[-{Stealth[scale=1.0]}] (0,0.2) -- (0,1.8);
        \draw[-{Stealth[scale=1.0]}] (0.2,0) -- (1.8,0) node[pos=.5, below=6pt] {$\vec{D}_1=2431$};
        \draw[-{Stealth[scale=1.0]}] (2,0.2) -- (2,1.8);
        \draw[-{Stealth[scale=1.0]}] (1.8,0.2) -- (0.2,1.8);
        \draw[-{Stealth[scale=1.0]}] (1.8,2) -- (0.2,2);
        \foreach \Point/\PointLabel in {(0,0)/2, (2,0)/4, (2,2)/3, (0,2)/1}
            \draw[fill=black] \Point circle (0) node {$\PointLabel$};
        \end{tikzpicture}
        \hspace{0.5in}
        \begin{tikzpicture}[scale=1.1]
         \draw[-{Stealth[scale=1.0]}] (0.2,0.2) -- (1.8,1.8);
        \draw[-{Stealth[scale=1.0]}] (0,0.2) -- (0,1.8);
        \draw[-{Stealth[scale=1.0]}] (0.2,0) -- (1.8,0) node[pos=.5, below=6pt] {$\vec{D}_2=2413$};
        \draw[-{Stealth[scale=1.0]}] (2,0.2) -- (2,1.8);
        \draw[-{Stealth[scale=1.0]}] (1.8,0.2) -- (0.2,1.8);
        \draw[-{Stealth[scale=1.0]}] (0.2,2)--(1.8,2) ;
        \foreach \Point/\PointLabel in {(0,0)/2, (2,0)/4, (2,2)/3, (0,2)/1}
            \draw[fill=black] \Point circle (0) node {$\PointLabel$};
         \end{tikzpicture}
         \hspace{0.5in}
        \begin{tikzpicture}[scale=1.1]
         \draw[-{Stealth[scale=1.0]}] (0.2,0.2) -- (1.8,1.8);
        \draw[-{Stealth[scale=1.0]}] (0,0.2) -- (0,1.8);
        \draw[-{Stealth[scale=1.0]}] (0.2,0) -- (1.8,0) node[pos=.5, below=6pt] {$\vec{D}_3$};
        \draw[-{Stealth[scale=1.0]}] (2,0.2) -- (2,1.8);
        \draw[-{Stealth[scale=1.0]}] (1.8,0.2) -- (0.2,1.8);
        \foreach \Point/\PointLabel in {(0,0)/2, (2,0)/4, (2,2)/3, (0,2)/1}
            \draw[fill=black] \Point circle (0) node {$\PointLabel$};
         \end{tikzpicture}
\end{center}
Note that $\vec{D}_1$ is the total linear order with permutation $2431$, $\vec{D}_2$ is the total linear order for $2413$. Both $\vec{D}_1$ and $\vec{D}_2$ extend $\vec{D}_3$, hence $2431$ and $2413$ linearly extend $\vec{D}_3$. Moreover, $\vec{D}_1$ and $\vec{D}_2$ are the only total linear orders which extend $\vec{D}_3$, 
therefore $\mathcal{L}(\vec{D}_3)=\{2431,2413\}$.
\end{example}

Now we are in a good position to define  enriched $\vec{D}$-partitions for a DAG $\vec{D}$. Stembridge originally defined the enriched $P$-partitions when $P$ is a poset. This definition can be easily extended to the cases when $\vec{D}$ is simply a DAG, as the definition does not rely on the transitivity of $P$.

Stembridge defines $\mathbb{P}'$ to be the set of nonzero integers, totally ordered as
$$-1\prec 1\prec -2\prec 2\prec -3\prec 3\prec\cdots.$$

\begin{definition}[Enriched $\vec{D}$-partition] \label{def:enriched DP}
Let $\vec{D}$ be a directed acyclic graph (DAG) on $[n]$. An {\em enriched $\vec{D}$-partition} is a function $f:[n]\to \mathbb{P'}$ such that for all $i\to j$ in $\vec{D}$,
\begin{enumerate}
    \item[(a)] $f(i)\preceq f(j)$, 
    \item[(b)] $f(i)=f(j)>0$ implies $i<j$,
    \item[(c)] $f(i)=f(j)<0$ implies $i>j$.
\end{enumerate}
Denote by $\mathcal{E}(\vec{D})$ the set of all enriched $\vec{D}$-partitions $f$.
\end{definition}

\begin{remark}
\hfill
\begin{enumerate}
    \item In this definition we are using two order structures on the domain $[n]$: the order $\to$ induced by DAG ${\vec{D}}$ and the ordinary total order $\leq$ on integers in (b) and (c). Both of them will impose restrictions on the possible choices for $f$. As for the range $\mathbb{P'}$, we also use two order structures: the total order $\preceq$ defined by Stembridge in (a) and the usual order $\leq$ on the integers in (b) and (c). 
    \item If $\vec{D}=w$ is a total linear order, the structure of the set of enriched $w$-partitions is quite simple:
\begin{equation}\label{eq:enriched}
\begin{array}{rl} 
    \mathcal{E}(w)=\{\,f:[n]\to \mathbb{P'} ~| & f(w_1)\preceq\cdots\preceq f(w_n),  \\
     & f(w_i)=f(w_{i+1})>0 \Rightarrow i\notin \Des(w),\\
     & f(w_i)=f(w_{i+1})<0 \Rightarrow i\in \Des(w)\,\}.\\
\end{array} 
\end{equation}
\end{enumerate}
\end{remark}

The following fundamental lemma is a straightforward analogue of Stembridge \cite{S:epp}, Lemma 2.1.

\begin{lem} [Fundamental lemma of enriched $\vec{D}$-partitions]
\label{FLEDP}
For any DAG $\vec{D}$ with vertex set $[n]$, one has a decomposition of $\mathcal{E}(\vec{D})$ as the following disjoint union:
$$\mathcal{E}(\vec{D})=\bigsqcup_{w\in \mathcal{L}(\vec{D})}\mathcal{E}(w).$$
\end{lem}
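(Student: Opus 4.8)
The plan is to mimic the proof of Stembridge's original Lemma 2.1 for enriched $P$-partitions, adapting it to the DAG setting. The essential point is that both sides of the claimed identity are sets of functions $f:[n]\to\mathbb{P}'$, so it suffices to show (i) each $\mathcal{E}(w)$ for $w\in\mathcal{L}(\vec{D})$ is contained in $\mathcal{E}(\vec{D})$, (ii) every $f\in\mathcal{E}(\vec{D})$ lies in some $\mathcal{E}(w)$ with $w\in\mathcal{L}(\vec{D})$, and (iii) the sets $\mathcal{E}(w)$ for distinct $w\in\mathcal{L}(\vec{D})$ are pairwise disjoint.

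For (i), suppose $w\in\mathcal{L}(\vec{D})$, so $\vec{D}\sbe\vec{P}_w$ where $\vec{P}_w$ is the total linear order corresponding to $w$. If $f\in\mathcal{E}(w)$ and $i\to j$ in $\vec{D}$, then also $i\to j$ in $\vec{P}_w$, i.e.\ $i=w_a$ and $j=w_b$ for some $a<b$; conditions (a)--(c) of Definition~\ref{def:enriched DP} for $f$ on the edge $i\to j$ then follow from the description~\eqref{eq:enriched} of $\mathcal{E}(w)$, using that $f(w_a)\preceq\cdots\preceq f(w_b)$ and that a run of equal positive (resp.\ negative) values forces no descent (resp.\ a descent) at each consecutive step, hence strictly increasing (resp.\ decreasing) indices along the run, which gives $a<b$ (resp.\ $a>b$, impossible, so the negative case forces $a>b$)—the point being that in each regime the index order along the chain $w_a,\dots,w_b$ is monotone in the required direction. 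So $f\in\mathcal{E}(\vec{D})$.

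For (ii), given $f\in\mathcal{E}(\vec{D})$, I build a total linear order $w$ refining $\vec{D}$ that ``sorts $[n]$ according to $f$.'' Concretely, define a total order $\sqsubseteq_f$ on $[n]$ by: $i\sqsubseteq_f j$ iff $f(i)\prec f(j)$, or $f(i)=f(j)>0$ and $i\le j$, or $f(i)=f(j)<0$ and $i\ge j$. One checks $\sqsubseteq_f$ is a genuine total order (the tie-breaking within a fixed positive value is by $\le$, within a fixed negative value by $\ge$, and these are total). Let $w$ be the permutation listing $[n]$ in $\sqsubseteq_f$-increasing order. Then $f\in\mathcal{E}(w)$ by construction (the defining inequalities in~\eqref{eq:enriched} are exactly the definition of $\sqsubseteq_f$), and the key claim is $w\in\mathcal{L}(\vec{D})$: whenever $i\to j$ in $\vec{D}$, conditions (a)--(c) for $f\in\mathcal{E}(\vec{D})$ say precisely that $i\sqsubseteq_f j$ and $i\ne j$, hence $i$ precedes $j$ in $w$, i.e.\ $\vec{D}\sbe\vec{P}_w$. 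This $w$ is moreover the unique element of $\mathcal{L}(\vec{D})$ with $f\in\mathcal{E}(w)$, since~\eqref{eq:enriched} forces any such $w$ to list $[n]$ in $\sqsubseteq_f$-order; this simultaneously establishes (iii), disjointness.

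The main obstacle—really the only place requiring care—is verifying that the index tie-breaking is consistent, i.e.\ that $\sqsubseteq_f$ is well-defined and total and that within a block of vertices sharing a common $f$-value the direction of the index comparison ($\le$ for positive values, $\ge$ for negative) is exactly what conditions (b) and (c) demand, so that extending $\vec{D}$ never creates a conflict. This is where the unusual order $\mathbb{P}'$ and the sign-dependent flip in (b) versus (c) interact; once that bookkeeping is pinned down, everything else is formal. I would remark that when $\vec{D}=\vec{P}$ is already a poset this recovers Stembridge's lemma verbatim, since transitivity was never used.
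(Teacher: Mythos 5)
Your proof is correct and follows essentially the same route as the paper: given $f\in\mathcal{E}(\vec{D})$, sort $[n]$ by $f$-value under $\preceq$, breaking ties among equal positive values by increasing index and among equal negative values by decreasing index, to obtain the unique $w\in\mathcal{L}(\vec{D})$ with $f\in\mathcal{E}(w)$; the reverse containment $\mathcal{E}(w)\subseteq\mathcal{E}(\vec{D})$ for $w\in\mathcal{L}(\vec{D})$ is the same routine check. Your write-up is in fact somewhat more explicit than the paper's about why $w$ extends $\vec{D}$ and about disjointness, but there is no substantive difference in method.
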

\begin{proof}
Given an enriched $\vec{D}$-partition $f$. First we arrange the elements of $[n]$ in a weakly increasing order of $f$-values with respect to the total order $\preceq$ on the range. Then if some elements in $[n]$ have the same $f$-value $-k$ (respectively, $+k$) for some positive integer $k$, we arrange them in a decreasing (respectively, increasing) order with respect to the usual order $\leq$ on the domain. The resulting permutation $w$ is unique with $f\in\mathcal{E}(w)$, and $w$ linearly extends $\vec{D}$. On the other hand, for $w\in \mathcal{L}(\vec{D})$, every enriched $w$-partition is also an enriched $\vec{D}$-partition. Therefore the conclusion follows.
\end{proof}

\begin{example}
Returning to Example \ref{DAG on [4]}, since $\mathcal{L}(\vec{D}_3)=\{2431,2413\}$, 
by the Fundamental Lemma,
$\mathcal{E}(\vec{D}_3)=\mathcal{E}(2431)\uplus \mathcal{E}(2413)$.
\end{example}

\section{Enriched toric $[\vec{D}]$-partitions for toric DAGs}\label{etp}
In this section, we review the toric DAGs and toric posets as cyclic analogues of DAGs and posets. Then we define enriched toric $[\vec{D}]$-partitions and develop some of their properties. The concept of toric poset was originally defined and studied by Develin, Macauley and Reiner in \cite{Dmr:tpo}. Here we follow the presentation from Adin, Gessel, Reiner and Roichman \cite{agrr:cqf}.

Just like a linear permutation has a corresponding cyclic permutation as the equivalence class under the equivalence of rotation, 
for a DAG, we will define an equivalence relation and consider the equivalence class to be the corresponding toric DAG. It turns out that if $w$ is a linear extension of the DAG $\vec{D}$, then $[w]$ is a toric extension of the corresponding toric DAG $[\vec{D}]$.

A DAG $\vec{D}$ on $[n]$ has $i_0\in[n]$ as a {\em source} (respectively, {\em sink}) if $\vec{D}$ does not contain $j\to i_0$ (respectively, $i_0\to j$) for any $j\in[n]$. Suppose $i_0$ is a source or a sink in $\vec{D}$, we say $\vec{D}'$ is obtained from $\vec{D}$ by a {\em flip} at $i_0$ if $\vec{D}'$ is obtained by reversing all arrows containing $i_0$. We define the equivalence relation $\equiv$ on DAGs as follows:
    $\vec{D}'\equiv\vec{D}$ if and only if $\vec{D}'$ is obtained from $\vec{D}$ by a sequence of flips. 
A {\em toric} DAG is the equivalence class $[\vec{D}]$ of a DAG $\vec{D}$.

In particular, if $\vec{D}=w=w_1 w_2\ldots w_n$ is a total linear order, the next proposition claims that the corresponding toric DAG $[\vec{D}]$ can be identified with the cyclic permutation $[w]$.

\begin{prop}[\cite{Dmr:tpo}, Proposition 4.2]\label{prop:cp}
If $\vec{D}=w$ is a total linear order with $w=w_1\ldots w_n$, then there is a bijection between toric DAG $[\vec{D}]$ and cyclic permutation $[w]$.\qed
\end{prop}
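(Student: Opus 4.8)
The plan is to exhibit an explicit bijection between the elements of the toric DAG $[\vec{D}]$ (the equivalence class of $\vec{D}=w$ under flips at sources and sinks) and the rotations comprising the cyclic permutation $[w]$. First I would understand what a flip does to a total linear order. If $\vec{D}=w_1w_2\dots w_n$ is the total linear order with unique directed path $w_1\to w_2\to\dots\to w_n$, then its only source is $w_1$ and its only sink is $w_n$. Flipping at the source $w_1$ reverses every arrow $w_1\to w_j$; I claim the resulting DAG is again a total linear order, namely the one corresponding to the rotation $w_2w_3\dots w_nw_1$. Indeed, after the flip we have $w_j\to w_1$ for all $j\ge 2$ together with the surviving arrows $w_i\to w_j$ for $2\le i<j\le n$, and these are exactly the arrows of the total linear order $w_2\to w_3\to\dots\to w_n\to w_1$. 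Symmetrically, flipping at the sink $w_n$ produces the total linear order $w_nw_1w_2\dots w_{n-1}$. So a single flip at the source or sink of a total linear order rotates the associated permutation by one step (forward or backward).

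The key steps, in order, are as follows. \textbf{Step 1:} Show that flips preserve the property of being a total linear order, and that on total linear orders the flip operation acts exactly as a one-step rotation of the corresponding permutation, as computed above. \textbf{Step 2:} Conclude that every DAG in the equivalence class $[\vec{D}]$ of $\vec{D}=w$ is again a total linear order, and the permutation it corresponds to is obtained from $w$ by a sequence of one-step rotations — hence is some rotation of $w$, i.e. an element of $[w]$. This gives a well-defined map $[\vec{D}]\to[w]$. \textbf{Step 3:} Surjectivity: every rotation $w_kw_{k+1}\dots w_{k-1}$ is reached from $w$ by $k-1$ successive source-flips, so it lies in the equivalence class; hence the map is onto. \textbf{Step 4:} Injectivity: two total linear orders are equal as DAGs iff their defining paths, hence their permutations, are equal; so distinct elements of $[\vec{D}]$ give distinct rotations. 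Combining Steps 2–4 yields the bijection. (One should note $[\vec{D}]$ has size exactly $n$ here, matching $|[w]|=n$, which also follows from the argument.)

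The main obstacle is Step 1 — specifically, verifying carefully that a flip at a source or sink of a total linear order yields another total linear order rather than some other DAG, and pinning down exactly which rotation it is. This requires checking that the set of arrows after the flip is precisely $\{w_{\sigma(i)}\to w_{\sigma(j)} : i<j\}$ for the rotated indexing $\sigma$, including that no spurious arrows appear and none are missing; the acyclicity is automatic since flips at sources/sinks preserve acyclicity (a cycle through $i_0$ after the flip would force $i_0$ to be neither source nor sink before it). Everything else is bookkeeping: once Step 1 is in hand, Steps 2–4 are routine, and in fact much of this is already the content of Proposition 4.2 of \cite{Dmr:tpo}, which we may simply cite; the proof above is essentially an unpacking of that reference in our notation.
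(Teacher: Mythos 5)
Your proposal is correct: the computation that a flip at the unique source (resp.\ sink) of the total linear order $w_1\to\cdots\to w_n$ yields exactly the total linear order $w_2\cdots w_nw_1$ (resp.\ $w_nw_1\cdots w_{n-1}$) is right, and the surjectivity/injectivity bookkeeping in Steps 2--4 goes through. The paper itself supplies no argument here --- it simply cites \cite{Dmr:tpo}, Proposition 4.2 --- and your write-up is a faithful unpacking of that cited result, identifying flips of total linear orders with one-step rotations of the corresponding permutation.
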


\begin{example}
The total linear order $\vec{D}_1=2431$ from Example \ref{DAG on [4]} has a corresponding toric DAG $[\vec{D}_1]$:

\begin{center}
        \begin{tikzpicture}[scale=1.1, yshift=10in]
        \draw[-{Stealth[scale=1.0]}] (0.2,0.2) -- (1.8,1.8);
        \draw[-{Stealth[scale=1.0]}] (0,0.2) -- (0,1.8);
        \draw[-{Stealth[scale=1.0]}] (0.2,0) -- (1.8,0) node[pos=.5, below=6pt] {$\vec{D}_1=2431$};
        \draw[-{Stealth[scale=1.0]}] (2,0.2) -- (2,1.8);
        \draw[-{Stealth[scale=1.0]}] (1.8,0.2) -- (0.2,1.8);
        \draw[-{Stealth[scale=1.0]}] (1.8,2) -- (0.2,2);
        \foreach \Point/\PointLabel in {(0,0)/2, (2,0)/4, (2,2)/3, (0,2)/1}
            \draw[fill=black] \Point circle (0) node {$\PointLabel$};
        \end{tikzpicture}
        \hspace{0.5in}
        \begin{tikzpicture}[scale=1.1]
        \draw[-{Stealth[scale=1.0]}] (0.2,0.2) -- (1.8,1.8);
        \draw[-{Stealth[scale=1.0]}] (0,1.8) -- (0,0.2);
        \draw[-{Stealth[scale=1.0]}] (0.2,0) -- (1.8,0) node[pos=.5, below=6pt] {$\vec{D}_1'=1243$};
        \draw[-{Stealth[scale=1.0]}] (2,0.2) -- (2,1.8);
        \draw[-{Stealth[scale=1.0]}] (0.2,1.8) -- (1.8,0.2);
        \draw[-{Stealth[scale=1.0]}] (0.2,2)--(1.8,2) ;
        \foreach \Point/\PointLabel in {(0,0)/2, (2,0)/4, (2,2)/3, (0,2)/1}
            \draw[fill=black] \Point circle (0) node {$\PointLabel$};
         \end{tikzpicture}
         \hspace{0.5in}
        \begin{tikzpicture}[scale=1.1]
         \draw[-{Stealth[scale=1.0]}](1.8,1.8) -- (0.2,0.2);
        \draw[-{Stealth[scale=1.0]}] (0,1.8) -- (0,0.2);
        \draw[-{Stealth[scale=1.0]}] (0.2,0) -- (1.8,0) node[pos=.5, below=6pt] {$\vec{D}_1''=3124$};
        \draw[-{Stealth[scale=1.0]}] (2,1.8) -- (2,0.2);
        \draw[-{Stealth[scale=1.0]}] (1.8,2) -- (0.2,2);
        \draw[-{Stealth[scale=1.0]}] (0.2,1.8) -- (1.8,0.2);
        \foreach \Point/\PointLabel in {(0,0)/2, (2,0)/4, (2,2)/3, (0,2)/1}
            \draw[fill=black] \Point circle (0) node {$\PointLabel$};
         \end{tikzpicture}
         \hspace{0.5in}
        \begin{tikzpicture}[scale=1.1]
         \draw[-{Stealth[scale=1.0]}](1.8,1.8) -- (0.2,0.2);
        \draw[-{Stealth[scale=1.0]}] (0,1.8) -- (0,0.2);
        \draw[-{Stealth[scale=1.0]}] (1.8,0)-- (0.2,0) node[pos=.5, below=6pt] {$\vec{D}_1'''=4312$};
        \draw[-{Stealth[scale=1.0]}] (2,0.2) -- (2,1.8);
        \draw[-{Stealth[scale=1.0]}] (1.8,2) -- (0.2,2);
        \draw[-{Stealth[scale=1.0]}] (1.8,0.2) -- (0.2,1.8);
        \foreach \Point/\PointLabel in {(0,0)/2, (2,0)/4, (2,2)/3, (0,2)/1}
            \draw[fill=black] \Point circle (0) node {$\PointLabel$};
         \end{tikzpicture}
\end{center}
They can be obtained by a sequence of flips:
$$
\vec{D}_1\xrightarrow{\text{flip at 1}}\vec{D}_1'\xrightarrow{\text{flip at 3}}\vec{D}_1''\xrightarrow{\text{flip at 4}}\vec{D}_1'''\xrightarrow{\text{flip at 2}}\vec{D}_1.
$$
Therefore it is easy to see that $[\vec{D}_1]$ can be identified with $[2431]=\{\,2431,1243,3124,4312\,\}$.
\end{example}

As transitivity turns a DAG into a poset, we now introduce the definition of toric transitivity for a DAG, which will turn the corresponding toric DAG into a toric poset.

A DAG $\vec{D}$ with vertex set $[n]$ is {\em toric transitive} if the existence of a directed path $i_1\to i_2\to\dots\to i_k$ and $i_1\to i_k$  implies the existence of $i_a\to i_b$ in $\vec{D}$ for all $1\leq a<b\leq k$. 
A toric DAG $[\vec{D}]$ is a {\em toric poset} if $\vec{D}'$ is toric transitive for some $\vec{D}'\in[\vec{D}]$, or equivalently from \cite{agrr:cqf} Proposition 3.10, for all representatives $\vec{D}'$. 
A toric poset is a {\em total cyclic order} if one (or equivalently, all, according to Proposition \ref{prop:cp}) 
representative is a total linear order. In this case, we usually use the corresponding cyclic permutation $[w]$ to denote the total cyclic order $[\vec{D}]$.

\begin{remark}
In this paper, we adopt the definition of toric posets from \cite{agrr:cqf}, which is not quite the same as it was originally defined in \cite{Dmr:tpo}, but they are essentially equivalent by \cite{Dmr:tpo}, Theorem 1.4.
\end{remark}

For two toric DAGs $[\vec{D}_1],[\vec{D}_2]$ on the same vertex set $[n]$, we say $[\vec{D_2}]$ {\em extends} $[\vec{D}_1]$ 
if there exist $\vec{D}_i'\in[\vec{D}_i']$ for  $i=1,2$ such that $\vec{D}_2'$ extends $\vec{D}_1'$. 
If, furthermore, $[\vec{D}_2]$ is a total cyclic order corresponding to the cyclic permutation $[w]$, we also say that $[w]$ torically extends $[\vec{D}_1]$. 
Let $\mathcal{L}^{\tor}([\vec{D}])$ denote the set of cyclic permutations $[w]$ which torically extend $[\vec{D}]$.

\begin{example}\label{ex:toric extension}
In the Example~\ref{DAG on [4]}, both $[\vec{D}_1]=[2431]$ and $[\vec{D}_2]=[2413]$ torically extend $[\vec{D}_3]$. Moreover, they are the only total cyclic orders that torically extend $[\vec{D}_3]$, namely $\mathcal{L}^{\tor}([\vec{D}_3])=\{[2431],[2413]\}$.

In fact, Figure~\ref{fig: D_3} lists all representatives of $[\vec{D}_3]$, and it is straightforward to check that every total linear order linearly extending some DAG in $[\vec{D}_3]$ is in either $[2431]$ or $[2413]$.
\begin{figure}
\begin{center}
        \begin{tikzpicture}[scale=1.1]
         \draw[-{Stealth[scale=1.0]}] (0.2,0.2) -- (1.8,1.8);
        \draw[-{Stealth[scale=1.0]}] (0,0.2) -- (0,1.8);
        \draw[-{Stealth[scale=1.0]}] (0.2,0) -- (1.8,0) node[pos=.5, below=6pt] {$\vec{D}_3$};
        \draw[-{Stealth[scale=1.0]}] (2,0.2) -- (2,1.8);
        \draw[-{Stealth[scale=1.0]}] (1.8,0.2) -- (0.2,1.8);
        \foreach \Point/\PointLabel in {(0,0)/2, (2,0)/4, (2,2)/3, (0,2)/1}
            \draw[fill=black] \Point circle (0) node {$\PointLabel$};
         \end{tikzpicture}
         \hspace{0.5in}
        \begin{tikzpicture}[scale=1.1]
        \draw[-{Stealth[scale=1.0]}] (0.2,0.2) -- (1.8,1.8);
        \draw[-{Stealth[scale=1.0]}] (0,1.8) -- (0,0.2);
        \draw[-{Stealth[scale=1.0]}] (0.2,0) -- (1.8,0) node[pos=.5, below=6pt] {$\vec{D}_3'$};
        \draw[-{Stealth[scale=1.0]}] (2,0.2) -- (2,1.8);
        \draw[-{Stealth[scale=1.0]}] (0.2,1.8) -- (1.8,0.2);
        \foreach \Point/\PointLabel in {(0,0)/2, (2,0)/4, (2,2)/3, (0,2)/1}
            \draw[fill=black] \Point circle (0) node {$\PointLabel$};
         \end{tikzpicture}
          \hspace{0.5in}
        \begin{tikzpicture}[scale=1.1]
         \draw[-{Stealth[scale=1.0]}](1.8,1.8) -- (0.2,0.2);
        \draw[-{Stealth[scale=1.0]}] (0,1.8) -- (0,0.2);
        \draw[-{Stealth[scale=1.0]}] (0.2,0) -- (1.8,0) node[pos=.5, below=6pt] {$\vec{D}_3''$};
        \draw[-{Stealth[scale=1.0]}] (2,1.8) -- (2,0.2);
        \draw[-{Stealth[scale=1.0]}] (0.2,1.8) -- (1.8,0.2);
        \foreach \Point/\PointLabel in {(0,0)/2, (2,0)/4, (2,2)/3, (0,2)/1}
            \draw[fill=black] \Point circle (0) node {$\PointLabel$};
         \end{tikzpicture}
         
         \hspace{0.5in}
        \begin{tikzpicture}[scale=1.1]
         \draw[-{Stealth[scale=1.0]}](1.8,1.8) -- (0.2,0.2);
        \draw[-{Stealth[scale=1.0]}] (0,1.8) -- (0,0.2);
        \draw[-{Stealth[scale=1.0]}] (1.8,0)-- (0.2,0) node[pos=.5, below=6pt] {$\vec{D}_3'''$};
        \draw[-{Stealth[scale=1.0]}] (2,0.2) -- (2,1.8);
        \draw[-{Stealth[scale=1.0]}] (1.8,0.2) -- (0.2,1.8);
        \foreach \Point/\PointLabel in {(0,0)/2, (2,0)/4, (2,2)/3, (0,2)/1}
            \draw[fill=black] \Point circle (0) node {$\PointLabel$};
         \end{tikzpicture}
        \hspace{0.5in}
        \begin{tikzpicture}[scale=1.1]
         \draw[-{Stealth[scale=1.0]}] (1.8,1.8) -- (0.2,0.2);
        \draw[-{Stealth[scale=1.0]}] (0,0.2) -- (0,1.8);
        \draw[-{Stealth[scale=1.0]}] (0.2,0) -- (1.8,0) node[pos=.5, below=6pt] {$\vec{D}_3''''$};
        \draw[-{Stealth[scale=1.0]}] (2,1.8) -- (2,0.2);
        \draw[-{Stealth[scale=1.0]}] (1.8,0.2) -- (0.2,1.8);
        \foreach \Point/\PointLabel in {(0,0)/2, (2,0)/4, (2,2)/3, (0,2)/1}
            \draw[fill=black] \Point circle (0) node {$\PointLabel$};
         \end{tikzpicture}
\end{center}
\caption{All representatives of $[\vec{D}_3]$}
    \label{fig: D_3}
\end{figure}
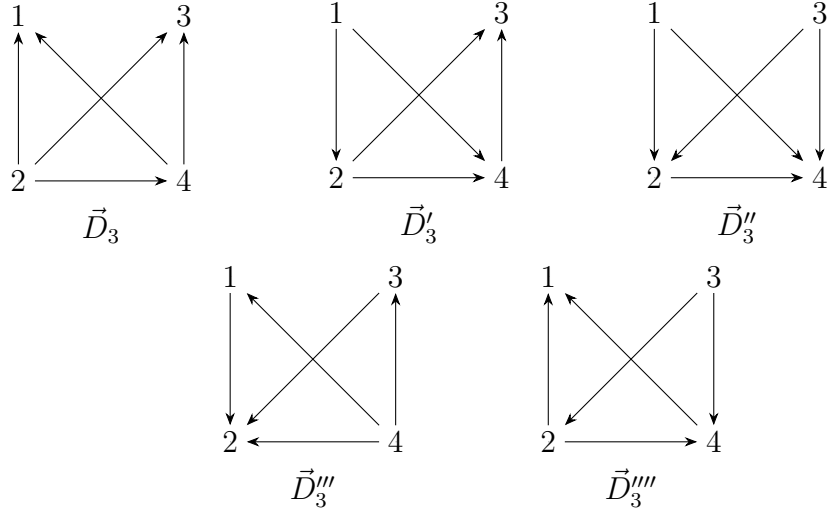
\end{example}

\begin{definition} [\text{Enriched toric $[\vec{D}]$-partition}]
An {\em enriched toric $[\vec{D}]$-partition} 
is a function $f:[n]\to \mathbb{P'}$ which is an enriched $\vec{D'}$-partition for at least one DAG $\vec {D}'$ in $[\vec{D}]$. Let $\mathcal{E}^{\tor}([\vec{D}])$ 
denote the set of all enriched toric $[\vec{D}]$-partitions.
\end{definition}

If $[\vec{D}]=[w]$ is a total cyclic order, the set of enriched toric $[w]$-partitions is the union of the set of enriched $w'$-partitions for all representatives $w'$ of $[w]$:
\begin{equation}\label{e-KK}
\mathcal{E}^{\tor}([w])=\bigcup_{w'\in[w]}\mathcal{E}(w').
\end{equation}

As in the linear case, we have the following  fundamental lemma for the decomposition of enriched toric $[\vec{D}]$-partitions. The proof is analogous to \cite{agrr:cqf}, Lemma 3.15.

\begin{lem} [\text{Fundamental lemma of enriched toric $[\vec{D}]$-partitions}]
\label{FLECDP}
For a DAG $\vec{D}$, the set of all enriched toric $[\vec{D}]$-partitions is a disjoint union of the set of enriched toric $[w]$-partitions of all toric extensions $[w]$ of $[\vec{D}]$:
$$\mathcal{E}^{\tor}([\vec{D}])=\bigsqcup_{[w]\in \mathcal{L}^{\tor}([\vec{D}])}\mathcal{E}^{tor}([w]).$$
\end{lem}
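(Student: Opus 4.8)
The plan is to reduce the statement to the linear Fundamental Lemma (Lemma~\ref{FLEDP}) together with one combinatorial observation: that the single pair of representatives witnessing a toric extension can be ``propagated'' around the whole cyclic class by flips. First I would record the special case of Lemma~\ref{FLEDP} obtained by taking $\vec{D}$ to be the edgeless DAG on $[n]$: then $\mathcal{L}(\vec{D})=\mathcal{S}_n$ and $\mathcal{E}(\vec{D})$ is the set of \emph{all} functions $[n]\to\mathbb{P}'$, so the family $\{\mathcal{E}(w):w\in\mathcal{S}_n\}$ is a partition of that set. In particular each function lies in $\mathcal{E}(w)$ for at most one $w\in\mathcal{S}_n$, which supplies the disjointness essentially for free and also lets me regroup unions of the blocks $\mathcal{E}(w)$ at will.

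The key claim I would isolate and prove is: \emph{if $[w]$ torically extends $[\vec{D}]$, then for \textbf{every} representative $w'\in[w]$ there is a representative $\vec{D}'\in[\vec{D}]$ with $\vec{D}'\sbe w'$.} By the definition of toric extension we are handed one such pair $\vec{D}_0\in[\vec{D}]$, $w_0\in[w]$ with $\vec{D}_0\sbe w_0$. Writing the total linear order $w_0=u_1u_2\cdots u_n$, its unique source is $u_1$ and its unique sink is $u_n$; a flip of $w_0$ at $u_1$ is exactly the rotation $u_2\cdots u_nu_1$, and a flip at $u_n$ is the rotation $u_nu_1\cdots u_{n-1}$, so every element of $[w]$ is reached from $w_0$ by a sequence of source/sink flips. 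Now if $\vec{D}_0\sbe w_0$ then $u_1$ has no incoming edge in $\vec{D}_0$ (any such edge would also be present in $w_0$), so $u_1$ is also a source of $\vec{D}_0$ and we may legally flip $\vec{D}_0$ at $u_1$; reversing the edges out of $u_1$ in $\vec{D}_0$ yields only edges that arise by reversing the corresponding edges of $w_0$, while edges avoiding $u_1$ are untouched in both graphs, so the flipped DAG still sits inside the flipped $w_0$. The sink case is symmetric, and iterating along a flip sequence carrying $w_0$ to $w'$ yields the claim.

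With these two ingredients the argument assembles quickly. By definition $\mathcal{E}^{\tor}([\vec{D}])=\bigcup_{\vec{D}'\in[\vec{D}]}\mathcal{E}(\vec{D}')$, and Lemma~\ref{FLEDP} rewrites each $\mathcal{E}(\vec{D}')$ as $\bigsqcup_{w\in\mathcal{L}(\vec{D}')}\mathcal{E}(w)$; since the blocks $\mathcal{E}(w)$ are globally disjoint, this collapses to $\bigcup_{w\in I}\mathcal{E}(w)$ with $I=\bigcup_{\vec{D}'\in[\vec{D}]}\mathcal{L}(\vec{D}')$. I then identify $I$ with $\bigcup_{[w]\in\mathcal{L}^{\tor}([\vec{D}])}[w]$: if $w\in\mathcal{L}(\vec{D}')$ for some $\vec{D}'\in[\vec{D}]$ then $[w]$ torically extends $[\vec{D}]$ straight from the definition, and conversely if $[w]\in\mathcal{L}^{\tor}([\vec{D}])$ then the key claim applied to the representative $w$ itself produces $\vec{D}'\in[\vec{D}]$ with $\vec{D}'\sbe w$, i.e.\ $w\in\mathcal{L}(\vec{D}')$. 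Splitting $I$ into its cyclic classes and invoking \eqref{e-KK} turns $\bigcup_{w\in I}\mathcal{E}(w)$ into $\bigcup_{[w]\in\mathcal{L}^{\tor}([\vec{D}])}\mathcal{E}^{\tor}([w])$, and this outer union is disjoint because distinct cyclic classes consist of disjoint sets of linear representatives $w'$, whose blocks $\mathcal{E}(w')$ are pairwise disjoint by the first paragraph.

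The main obstacle is the key claim: the definition of toric extension only guarantees a single matching pair of representatives, so one must genuinely check that flipping a source or sink of the ambient total linear order can be mirrored by a legal flip of the smaller DAG without destroying the containment. This is precisely the analogue of the reasoning behind \cite{agrr:cqf}, Lemma 3.15; once it is available, everything else is bookkeeping on top of the already-established linear Fundamental Lemma.
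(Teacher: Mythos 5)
Your proposal is correct and follows essentially the same route as the paper: reduce to the linear Fundamental Lemma, regroup the resulting union of blocks $\mathcal{E}(w)$ by cyclic classes, identify the index set $\bigcup_{\vec{D}'\in[\vec{D}]}\mathcal{L}(\vec{D}')$ with $\bigcup_{[w]\in\mathcal{L}^{\tor}([\vec{D}])}[w]$ via propagating the witnessing containment along a flip sequence, and get disjointness from the fact that each $f:[n]\to\mathbb{P}'$ lies in $\mathcal{E}(w)$ for a unique $w$. Your key claim is exactly the paper's step $(ii)$, spelled out in slightly more detail (explicitly checking that a source of the total linear order is a source of the contained DAG and that containment survives the flip).
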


\begin{proof}
By the definition of $\mathcal{E}^{\tor}([\vec{D}])$, one has
\[
\mathcal{E}^{\tor}([\vec{D}])=\bigcup_{\vec{D'}\in[\vec{D}]}\mathcal{E}(\vec{D'}).
\]
In particular when $[\vec{D}]=[w]$ is a total cyclic order, it follows from the Proposition~\ref{prop:cp} that,
\[
\mathcal{E}^{\tor}([w])=\bigcup_{w'\in [ w]}\mathcal{E}({w'}).
\]
Hence,
\begin{align*}
    \mathcal{E}^{\tor}([\vec{D}])&=\bigcup_{\vec{D'}\in[\vec{D}]}\mathcal{E}(\vec{D'})\\
    & \overset{(i)}{=} \bigcup_{\vec{D'}\in[\vec{D}]}\bigcup_{w'\in\mathcal{L}(\vec{D'})}\mathcal{E}(w') \\
    & \overset{(ii)}{=} \bigcup_{[ w]\in\mathcal{L}^{\tor}([\vec{D}])} \bigcup_{{w'}\in[w]}\mathcal{E}(w')\\
    & = \bigcup_{[w]\in \mathcal{L}^{\tor}([\vec{D}])}\mathcal{E}^{\tor}([w]).
\end{align*}

To justify these steps, first note that equality $(i)$ follows from Lemma~\ref{FLEDP}.  

As for equality $(ii)$, it suffices to show that $w'\in\mathcal{L}(\vec{D'})$ for some $\vec{D'}\in[\vec{D}]$ if and only if $w'\in[w]$ for some $[w]\in\mathcal{L}^{\tor}([\vec{D}])$.
For the forward direction, if $w'\in\mathcal{L}(\vec{D'})$ for some $\vec{D'}\in[\vec{D}]$, then $[w']=[w]$ and $[w']$ torically extends $[\vec{D'}]=[\vec{D}]$. While for the reverse implication, given $w'\in[w]\in\mathcal{L}^{\tor}([\vec{D}])$, pick $\vec{D''}\in[\vec{D}]$ and $w''\in[w]$ with $w''$ linearly extending $\vec{D''}$, then $[{w''}]=[{w'}]=[w]$. It follows that there exists a sequence of flips which takes $w''$ to $w'$. Now applying the same sequence of flips on $\vec{D''}$ will result in some $\vec{D'}$. One then has $\vec{D'}\in[\vec{D''}]=[\vec{D}]$ and $w'\in \mathcal{L}(\vec{D'})$ as desired.

The assertion of disjointness follows directly from the fact that every function $f:[n]\to\mathbb{P}'$ has a unique linear permutation $w\in\mathcal{S}_n$ such that $f$ is also an enriched $w$-partition, hence an enriched toric $[w]$-partition. Such a linear permutation $w$ can be similarly constructed as in the proof of Lemma~\ref{FLEDP}, so the details are omitted. This completes the proof.
\end{proof}

For the sake of convenience, we always assume the label set $[n]$, however, it is note that the definition of (toric) DAGs, (toric) posets can be extended to any finite subsets of $\mathbb{P}$ as labels, with all consequent conclusions continuing to hold.


\section{Weight enumerators and algebra of cyclic peaks}\label{we}
In this section, we first review the weight enumerators for enriched $P$-partitions from Stembridge \cite{S:epp} in section~\ref{sec:P-partition}. These enumerators will span an algebra $\Pi$ referred as the algebra of peaks, which is a graded subring of $\QSym$. We then discuss its cyclic analogue for enriched toric $[\vec{D}]$-partitions in section~\ref{sec:D-partition}. In this case, enumerators will generate the algebra of cyclic peaks $\Lambda$, which is a graded subring of $\cQSym$. 

\subsection{Weight enumerator for enriched $\vec{D}$-partitions}\label{sec:P-partition}

Suppose $\vec{D}$ is a DAG on $[n]$. Define the {\em weight enumerator for enriched $\vec{D}$-partitions} 
by the formal series
$$\Delta_{\vec{D}}:=\sum_{f\in \mathcal{E}(\vec{D})}\prod_{i\in[n]}x_{|f(i)|},
$$
where $\mathcal{E}(\vec{D})$ is the set of enriched $\vec{D}$-partitions. By the Fundamental Lemma~\ref{FLEDP}, one has
\begin{equation}\label{eq:Delta_P summation}
    \Delta_{\vec{D}}=\sum_{w\in \mathcal{L}(\vec{D})}\Delta_w.
\end{equation}
It is clear from equation~\eqref{eq:enriched} that $\Delta_w$ is
a homogeneous quasi-symmetric function. More generally, $\Delta_{\vec{D}}$ is a homogeneous quasi-symmetric function in $\QSym$.

It also follows from equation~\eqref{eq:enriched} that the weight enumerator $\Delta_w$ depends on the descent set $\Des w$. A less obvious but important observation that $\Delta_w$ depends only on the peak set $\Pk w$ will follow directly from the next proposition, proved by Stembridge in \cite{S:epp}.

\begin{prop}[\cite{S:epp}, Proposition 2.2]
As a quasi-symmetric function, $\Delta_w$ has the following expansion of monomial quasi-symmetric functions
\begin{equation}\label{eq:M_E expansion}
    \Delta_w=\sum_{\substack{E\sbe[n-1]\colon\\[4pt] \Pk w\sbe E\cup(E+1)}}2^{|E|+1}M_{n,E},
\end{equation}
where the set $E+1$ is defined by~\eqref{eq:i+E}.
\end{prop}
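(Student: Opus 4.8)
The plan is to work directly from the structural description of $\mathcal{E}(w)$ given in~\eqref{eq:enriched} and expand the weight enumerator in the monomial basis by grouping enriched $w$-partitions according to the pattern of strict versus weak inequalities among their values. First I would recall that an enriched $w$-partition $f$ is determined by a weakly $\preceq$-increasing sequence $f(w_1)\preceq\cdots\preceq f(w_n)$ of nonzero integers subject to the sign conditions at equalities dictated by $\Des w$. Passing to absolute values, each such $f$ contributes the monomial $x_{|f(w_1)|}\cdots x_{|f(w_n)|}$; the key point is that a fixed weakly increasing sequence $a_1\le\cdots\le a_n$ of positive integers (in the ordinary order) arises from how many enriched $w$-partitions $f$ with $|f(w_i)|=a_i$? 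At each position $i$ where $a_i<a_{i+1}$ we may independently choose the sign of $f(w_i)$; at each position where $a_i=a_{i+1}$ the common value must be positive if $i\notin\Des w$ and negative if $i\in\Des w$, so the sign is forced, but we must also check consistency at runs of equal absolute values. This is where the $\prec$-ordering $-1\prec 1\prec -2\prec 2\prec\cdots$ enters: within a maximal run of equal absolute value $k$, all negative entries (which come first in $\preceq$) must precede all positive entries, so such a run looks like $-k,\dots,-k,k,\dots,k$, which forces the descent/ascent pattern of $w$ on that run to be ``descents then ascents,'' i.e. the run can contain at most one peak-type switch.

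The combinatorial heart of the argument is then to show that, after summing over all sign choices, the coefficient of $M_{n,E}$ — equivalently the total weight over sequences whose set of strict-ascent positions is exactly $\Phi^{-1}$-encoded by $E$, i.e. $\{i : a_i<a_{i+1}\}=E$ — equals $2^{|E|+1}$ precisely when $\Pk w\subseteq E\cup(E+1)$ and $0$ otherwise. I would argue: given the strict-ascent set $E$, the positions $i\notin E$ have $a_i=a_{i+1}$ with sign forced by whether $i\in\Des w$; a valid assignment exists if and only if each maximal block of consecutive non-$E$ positions is compatible with a pattern ``some descents, then some ascents'' of $w$ restricted to that block — and the obstruction to this is exactly the presence of a peak $j\in\Pk w$ with both $j-1\notin E$ and $j\notin E$, i.e. $j\notin E\cup(E+1)$. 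When a valid assignment exists it is unique on the forced positions, the free positions are the $|E|$ strict ascents plus the global sign of the smallest value, giving $2^{|E|}\cdot 2 = 2^{|E|+1}$ enriched partitions mapping to each monomial shape, hence coefficient $2^{|E|+1}$ on $M_{n,E}$.

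The step I expect to be the main obstacle is the precise bookkeeping at the boundaries of equal-absolute-value runs versus strict-ascent positions: one has to be careful that a position $i$ being a strict $\preceq$-ascent does \emph{not} necessarily mean $|f(w_i)|<|f(w_{i+1})|$ (it could be $-k\prec k$ with equal absolute value), so the set $E$ I want to index $M_{n,E}$ by is the set of positions with strict \emph{absolute-value} increase, and I must verify the sign-counting is genuinely independent across the blocks cut out by $E$ and that no additional global parity constraint sneaks in. Once that independence is nailed down, the count $2^{|E|+1}$ and the feasibility criterion $\Pk w\subseteq E\cup(E+1)$ both fall out, and since $\Pk w\subseteq E\cup(E+1)$ depends on $w$ only through $\Pk w$, the promised corollary that $\Delta_w$ depends only on $\Pk w$ is immediate. (Alternatively, one could cite Stembridge's original argument verbatim, since~\eqref{eq:enriched} is literally his setup with $\vec D = w$; but writing out the monomial-expansion argument as above makes the paper self-contained.)
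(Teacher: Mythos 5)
The paper does not prove this proposition at all: it is quoted verbatim from Stembridge \cite{S:epp} (Proposition 2.2), so there is no internal argument to compare against. Your reconstruction is essentially Stembridge's own proof, and its skeleton is sound: fix the absolute-value sequence $a_1\le\cdots\le a_n$, let $E=\{i: a_i<a_{i+1}\}$, cut $[n]$ into the $|E|+1$ maximal blocks of constant absolute value, observe that on each block the values must read $-k,\dots,-k,k,\dots,k$, that the equality conditions force descents of $w$ on the negative segment and ascents on the positive segment, and that such a switch point exists iff the block's descent pattern is ``descents then ascents,'' whose failure is exactly a peak $j$ with $j-1\notin E$ and $j\notin E$, i.e.\ $j\notin E\cup(E+1)$. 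One detail in your counting is mislabelled, though it does not change the answer: when a block is feasible there are exactly \emph{two} valid sign patterns, differing in the sign of the entry at the descent-to-ascent transition \emph{inside} the block (the switch point can sit at the last descent or one step past it), not in ``the sign of $f(w_i)$ at the strict-ascent positions $i\in E$.'' For instance with $w=123$ and $E=\emptyset$ the two assignments are $(+,+,+)$ and $(-,+,+)$: the free sign is at position $1$, which is neither in $E$ nor the last position, and the signs at positions $i\notin E$ are not all forced as you claim. The correct statement is simply ``one independent binary choice per block, and there are $|E|+1$ blocks,'' which yields $2^{|E|+1}$; if you phrase the count that way, the argument is complete and the independence across blocks you were worried about is automatic, since the conditions defining an enriched $w$-partition only couple adjacent positions of equal absolute value.
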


\begin{example}\label{ex:enumerate toric}
From the Example~\ref{ex:toric extension}, we have $\mathcal{L}^{\tor}([\vec{D}_3])=\{\,[2431],[2413]\,\}$. By the Definition~\ref{def:enumerator} and equation \eqref{eq:cyclic enumerator decomp} in next subsection, one has
\begin{align*}
\Delta_{[\vec{D}_3]}^{\cyc}&=\Delta_{[2431]}^{\cyc}+\Delta_{[2413]}^{\cyc}\\
&=\Delta_{2431}+\Delta_{4312}+\Delta_{3124}+\Delta_{1243}+\Delta_{2413}+\Delta_{4132}+\Delta_{1324}+\Delta_{3241}.
\end{align*}

Applying the proposition above, we can calculate each summand as follows:
\begin{align*}
    \Delta_{2431}&=\sum_{E\sbe[3]\colon \{2\}\sbe E\cup(E+1)}2^{|E|+1}M_{4,E}\\[1ex]
    &=2^4M_{4,\{1,2,3\}}+2^3(M_{4,\{1,2\}}+M_{4,\{1,3\}}+M_{4,\{2,3\}})+2^2(M_{4,\{1\}}+M_{4,\{2\}}),\\[2ex]
    \Delta_{4312}&=\sum_{E\sbe[3]\colon \emptyset\sbe E\cup(E+1)}2^{|E|+1}M_{4,E}\\[1ex]
    &=2^4M_{4,\{1,2,3\}}+2^3(M_{4,\{1,2\}}+M_{4,\{1,3\}}+M_{4,\{2,3\}})+2^2(M_{4,\{1\}}+M_{4,\{2\}}+M_{4,\{3\}})+2M_{4,\emptyset},\\[2ex]
    \Delta_{3124}&=\sum_{E\sbe[3]\colon \emptyset \sbe E\cup(E+1)}2^{|E|+1}M_{4,E}\\[1ex]
    &=2^4M_{4,\{1,2,3\}}+2^3(M_{4,\{1,2\}}+M_{4,\{1,3\}}+M_{4,\{2,3\}})+2^2(M_{4,\{1\}}+M_{4,\{2\}}+M_{4,\{3\}})+2M_{4,\emptyset},\\[2ex]
    \Delta_{1243}&=\sum_{E\sbe[3]\colon \{3\}\sbe E\cup(E+1)}2^{|E|+1}M_{4,E}\\[1ex]
    &=2^4M_{4,\{1,2,3\}}+2^3(M_{4,\{1,2\}}+M_{4,\{1,3\}}+M_{4,\{2,3\}})+2^2(M_{4,\{2\}}+M_{4,\{3\}}),\\[2ex]  
    \Delta_{2413}&=\sum_{E\sbe[3]\colon \{2\}\sbe E\cup(E+1)}2^{|E|+1}M_{4,E}\\[1ex]
    &=2^4M_{4,\{1,2,3\}}+2^3(M_{4,\{1,2\}}+M_{4,\{1,3\}}+M_{4,\{2,3\}})+2^2(M_{4,\{1\}}+M_{4,\{2\}}),\\[2ex]
    \Delta_{4132}&=\sum_{E\sbe[3]\colon \{3\}\sbe E\cup(E+1)}2^{|E|+1}M_{4,E}\\[1ex]
    &=2^4M_{4,\{1,2,3\}}+2^3(M_{4,\{1,2\}}+M_{4,\{1,3\}}+M_{4,\{2,3\}})+2^2(M_{4,\{2\}}+M_{4,\{3\}}),\\[2ex]
    \Delta_{1324}&=\sum_{E\sbe[3]\colon \{2\}\sbe E\cup(E+1)}2^{|E|+1}M_{4,E}\\[1ex]
    &=2^4M_{4,\{1,2,3\}}+2^3(M_{4,\{1,2\}}+M_{4,\{1,3\}}+M_{4,\{2,3\}})+2^2(M_{4,\{1\}}+M_{4,\{2\}}),\\[2ex]
    \Delta_{3241}&=\sum_{E\sbe[3]\colon \{3\}\sbe E\cup(E+1)}2^{|E|+1}M_{4,E}\\[1ex]
    &=2^4M_{4,\{1,2,3\}}+2^3(M_{4,\{1,2\}}+M_{4,\{1,3\}}+M_{4,\{2,3\}})+2^2(M_{4,\{2\}}+M_{4,\{3\}}).
\end{align*}
Therefore, we have 
 \[   \arraycolsep=1.4pt\def\arraystretch{1.3}
    \begin{array}{rl}
    \Delta_{[\vec{D}_3]}^{\cyc}= 
    & 2^4\cdot 8M_{4,\{1,2,3\}}+2^3\cdot 8\left(M_{4,\{1,2\}}+M_{4,\{1,3\}}+M_{4,\{2,3\}}\right)+2^2\cdot 5\left(M_{4,\{1\}}+M_{4,\{3\}}\right)\\[1ex]
    &+2^2\cdot 8M_{4,\{2\}}+2\cdot 2M_{4,\emptyset}\\[2ex]
    =&2\cdot 2^4M^{cyc}_{(1,1,1,1)}+8\cdot 2^3M^{cyc}_{(2,1,1)}+5\cdot 2^2M^{cyc}_{(3,1)}+4\cdot 2^2M^{cyc}_{(2,2)}+2\cdot 2M^{cyc}_{(4)},\\
    \end{array} \]
where the second equality follows from the Table~\ref{table: cyclic M}.
\end{example}

As a counterpart, the weight enumerator $\Delta_w$ also has an expansion in terms of another basis: the fundamental quasi-symmetric functions.

\begin{prop}[\cite{S:epp}, Proposition 3.5]
As a quasi-symmetric function, $\Delta_w$ has the following expansion of fundamental quasi-symmetric functions
$$\Delta_w=2^{\pk w+1}\sum_{D\sbe[n-1]\colon \Pk w\sbe D\triangle (D+1)}F_{n,D}.
$$
Here $\triangle$ denotes the symmetric difference, that is, $D\triangle E=(D\cup E)\setminus(D\cap E)$.
\end{prop}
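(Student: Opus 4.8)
The plan is to derive this fundamental-quasisymmetric expansion directly from the monomial expansion in the preceding proposition, namely
\[
\Delta_w=\sum_{\substack{E\sbe[n-1]\colon\\ \Pk w\sbe E\cup(E+1)}}2^{|E|+1}M_{n,E},
\]
by converting from the $M$-basis to the $F$-basis. The first step is to recall that, by Möbius inversion on~\eqref{eq:M vs F}, one has $M_{n,E}=\sum_{D\supseteq E}(-1)^{|D\setminus E|}F_{n,D}$. Substituting this into the monomial expansion and interchanging the order of summation, I would collect the coefficient of each $F_{n,D}$, obtaining
\[
\Delta_w=\sum_{D\sbe[n-1]}\left(\sum_{\substack{E\sbe D\colon\\ \Pk w\sbe E\cup(E+1)}}(-1)^{|D\setminus E|}2^{|E|+1}\right)F_{n,D}.
\]
So the whole statement reduces to the combinatorial identity that the inner alternating sum equals $2^{\pk w+1}$ when $\Pk w\sbe D\triangle(D+1)$ and equals $0$ otherwise.

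The main work is evaluating that inner sum. I would fix $D$ and write $c(D)=2\sum_{E\sbe D,\ \Pk w\sbe E\cup(E+1)}(-2)^{|E|}(-1)^{-|D|}$; pulling out $(-1)^{|D|}$ the sum becomes $2(-1)^{|D|}\sum_{E}(-2)^{|E|}$ over admissible $E\sbe D$. The constraint $\Pk w\sbe E\cup(E+1)$ means: for each peak $p\in\Pk w$, at least one of $p$ or $p-1$ lies in $E$. Since $E\sbe D$, a necessary condition for any admissible $E$ to exist is that for each $p\in\Pk w$, at least one of $p,p-1$ lies in $D$; one checks this is exactly the condition $\Pk w\sbe D\cup(D+1)$, which is implied by (but I must be careful — need to verify it is equivalent in the relevant range to) $\Pk w\sbe D\triangle(D+1)$. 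Then I would factor the sum over $E$ according to the ``blocks'' of $D$ determined by the peaks: writing the admissible $E$'s as unions of independent local choices, the sum $\sum_E(-2)^{|E|}$ factors as a product of $(1-2)=-1$ contributions over the free elements of $D$ and suitable $2\times 2$-type local sums over the peak-constrained pairs. The standard Stembridge-style computation shows each constrained local factor contributes $-2$ precisely when the two positions $p-1,p$ are split by $D$ (one in, one out) — i.e. exactly when $p\in D\triangle(D+1)$ — and contributes $0$ (the local sum vanishes) otherwise; the $\prod(-1)$ over the remaining $|D|-\pk w$ free elements combines with $(-1)^{|D|}$ and the leading $2$ to give $2^{\pk w+1}$.

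The key step I expect to be the main obstacle is making the factorization of the inner sum over $E$ rigorous: the constraints ``$p\in E$ or $p-1\in E$'' for different peaks $p$ can in principle share an element (when two peaks are at distance $2$, so $p-1$ and $(p-1)$ coincide as the shared index between consecutive peak-pairs), so the local factors are not automatically independent, and one has to set up the correct partition of $D$ into intervals between peaks and handle the shared-boundary case with care — this is precisely where the hypothesis $\Pk w\sbe D\triangle(D+1)$ (rather than merely $\subseteq D\cup(D+1)$) does real work, since it forces alternation of membership along such runs and thereby decouples the constraints. Once the bookkeeping of these blocks is in place, verifying that the vanishing case and the $2^{\pk w+1}$ case come out correctly is a routine inclusion–exclusion computation, and I would present it by grouping elements of $[n-1]$ as: peak positions, their predecessors, and everything else, then evaluating the geometric-type sum block by block. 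Finally I would note that this proposition immediately implies $\Delta_w$ depends only on $\Pk w$, recovering the remark made before the previous proposition.
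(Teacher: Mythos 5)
Your route is correct and can be completed as outlined; note that the paper itself offers no proof of this statement (it is quoted from Stembridge, whose original argument works directly with enriched $w$-partitions), so deriving it from the monomial expansion of the preceding proposition via M\"obius inversion, as you do, is a legitimate and self-contained alternative. Two remarks on the details. First, the step you single out as the main obstacle is in fact a non-issue: distinct peaks $p,q\in\Pk w$ satisfy $|p-q|\ge 2$, so the pairs $\{p-1,p\}$ and $\{q-1,q\}$ could only meet if $q=p+1$, which never happens; hence the constraints attached to different peaks never share an element, and the factorization of $\sum_{E}(-2)^{|E|}$ into a product over the free elements of $D$ and over the sets $D\cap\{p-1,p\}$ is automatic. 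In particular it is not the hypothesis $\Pk w\sbe D\triangle(D+1)$ that ``decouples'' the constraints; rather, that condition emerges from the local computation: the factor attached to a peak $p$ is an empty sum (hence $0$) if $D\cap\{p-1,p\}=\emptyset$, equals $-2$ if exactly one of $p-1,p$ lies in $D$, and equals $(-2)+(-2)+4=0$ if both lie in $D$, so the coefficient $c(D)$ is nonzero exactly when every peak is split by $D$, i.e.\ exactly when $\Pk w\sbe D\triangle(D+1)$ (your parenthetical worry about $D\cup(D+1)$ versus $D\triangle(D+1)$ is absorbed by the vanishing of the ``both in $D$'' factor). Second, the sign count then closes as you indicate: in the nonzero case $D$ meets $\bigcup_{p\in\Pk w}\{p-1,p\}$ in exactly $\pk w$ elements, so
\[
c(D)=2\,(-1)^{|D|}\,(-1)^{|D|-\pk w}\,(-2)^{\pk w}=2^{\pk w+1},
\]
which is the asserted coefficient of $F_{n,D}$. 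With these points made precise, your argument is a complete proof, and it also immediately re-derives the fact, used in the paper, that $\Delta_w$ depends only on $\Pk w$.
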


We say a subset $S\sbe[n]$ is a {\em peak set in $[n]$} if $\Pk w=S$ for some $w\in\mathcal{S}_n$. For any peak set $S$ in $[n]$, 
from the above proposition we can define an associated quasi-symmetric function by
\begin{equation*}
    K_S:=\Delta_w,
\end{equation*}
for any permutation $w$ with peak set $S$. It follows that $\Delta_w=K_{\Pk w}$ and one can rewrite equation~\eqref{eq:Delta_P summation} as
\[
\Delta_{\vec{D}}=\sum_{w\in \mathcal{L}(\vec{D})}K_{\Pk w}.
\]

Let $\Pi_n$ denote the space of quasi-symmetric functions spanned by $K_S$, taken over all peak sets in $[n]$ and set $\Pi=\oplus_{n\geq0}\Pi_n$. In \cite{S:epp} Stembridge referred to $\Pi$ as the algebra of peaks, and proved that $\Pi$ is a graded subring of $\QSym$. 

\subsection{Weight enumerator for enriched toric $[\vec{D}]$-partitions}\label{sec:D-partition}

\begin{definition}\label{def:enumerator}
For a given toric poset $[\vec{D}]$ with vertex set $[n]$, we define the {\em weight enumerator for enriched toric $[\vec{D}]$-partitions} by the formal power series 
$$\Delta_{[\vec{D}]}^{\cyc}:=\sum_{f\in \mathcal{E}^{\tor}([\vec{D}])}\prod_{i\in[n]}x_{|f(i)|}.$$
Namely, for integer $k>0$ we assign the weight $x_k$ to both $f$-values $k$ and $-k$.
\end{definition}

As a direct consequence of the Fundamental Lemma~\ref{FLECDP}, one has
\begin{equation}\label{eq:cyclic enumerator decomp}
    \Delta_{[\vec{D}]}^{\cyc}=\sum_{[ w]\in\mathcal{L}^{\tor}([\vec{D}])}\Delta_{[w]}^{\cyc}.
\end{equation}
Therefore, it suffices to discuss $\Delta_{[w]}^{\cyc}$ for cyclic permutations $[w]$. It follows from the formula \eqref{e-KK} that $\Delta_{[w]}^{\cyc}$ can be expressed in terms of the weight enumerators $\{\Delta_v\}$ as
\begin{equation}\label{e-Dsum}
    \Delta_{[w]}^{\cyc}=\sum_{v\in[w]}\Delta_v.
\end{equation}
Moreover, $\Delta_{[w]}^{\cyc}$ has the following expression.
\begin{prop}\label{prop:crucial}
For any given cyclic permutation $[w]$ of length $n$, we have 
\begin{equation}\label{eq:depend on cPk}
    \Delta_{[w]}^{cyc}=\sum_{\substack{E\subseteq[n]:\\[4pt]\cPk(w)\sbe E\cup(E+1)}}2^{|E|}M_{n,E}^{cyc},
\end{equation}
with set $E+1$ defined by~\eqref{eq:i+E}. 
The sum is independent on the choice of representative $w$ of $[w]$.
\end{prop}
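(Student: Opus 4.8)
The plan is to derive the cyclic expansion~\eqref{eq:depend on cPk} from its linear counterpart~\eqref{eq:M_E expansion} together with the linear-to-cyclic relationships established in Section~\ref{bd}. First I would start from equation~\eqref{e-Dsum}, which writes $\Delta_{[w]}^{\cyc}=\sum_{v\in[w]}\Delta_v$, and invoke Stembridge's monomial expansion~\eqref{eq:M_E expansion} for each representative $v$. Since $v$ ranges over the $n$ cyclic rotations of $w$, and the peak set $\Pk v$ of a rotation is a ``truncated cyclic shift'' of $\cPk w$ (precisely: $\Pk v$ is obtained from the cyclic shift of $\cPk w$ appropriate to that rotation by deleting any element falling at position $1$ or $n$), I would record carefully how $\Pk v$ relates to $\cPk w$. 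The key combinatorial point is that the condition $\Pk v\sbe E\cup(E+1)$ with $E\sbe[n-1]$, summed over all rotations $v$, should reassemble into the single cyclic condition $\cPk w\sbe E\cup(E+1)$ with $E\sbe[n]$.

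The cleanest route, I expect, is to pass through the set-indexed formalism: rewrite $\sum_{v\in[w]}\Delta_v$ using~\eqref{eq:M_E expansion}, then apply Lemma~\eqref{eq:monomial to cyclic monomial} (``monomial to cyclic monomial''), which says $M^{\cyc}_{n,E}=\sum_{e\in E}M_{n,(E-e)\cap[n-1]}$, to convert the right-hand side of the desired identity~\eqref{eq:depend on cPk} into an explicit $\mathbb{Q}$-linear combination of ordinary $M_{n,L}$ with $L\sbe[n-1]$. Then I would compute the coefficient of a fixed $M_{n,L}$ on both sides and check they agree. On the left side of~\eqref{e-Dsum}, the coefficient of $M_{n,L}$ is $\sum_{v\in[w]} 2^{|L|+1}[\Pk v\sbe L\cup(L+1)]$; on the right side, expanding via~\eqref{eq:monomial to cyclic monomial}, the coefficient of $M_{n,L}$ is $\sum_{E\sbe[n]:\,\cPk w\sbe E\cup(E+1)}2^{|E|}\cdot\#\{e\in E: (E-e)\cap[n-1]=L\}$. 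Matching these reduces to a bijective/counting argument: for each valid $L$, the sets $E\sbe[n]$ with $(E-e)\cap[n-1]=L$ for some $e\in E$ are exactly $E=(L+e)\cup\{e'\}$ arrangements — concretely $E$ runs over the cyclic shifts $i+L$ of $L$ together with possibly adjoining the shift point; I would verify that $|E|=|L|+1$ in the relevant cases and that the multiplicities $\#\{e\in E:\ldots\}$ combine with the count of rotations $v$ to give the factor matching. The independence of the representative is then automatic, since the final expression is phrased entirely in terms of $\cPk w$ and $n$, and $\cPk$ is a cyclic-permutation invariant by Remark~\ref{remark: cPk[w]} (cyclically shifting $w$ cyclically shifts $\cPk w$, while the summation condition $\cPk w\sbe E\cup(E+1)$ and the basis element $M^{\cyc}_{n,E}$ are both invariant under simultaneous cyclic shift of $\cPk w$ and $E$).

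An alternative, possibly shorter, route avoids the explicit $M_{n,L}$-bookkeeping: one shows directly that $\sum_{v\in[w]}\Delta_v$, written via~\eqref{eq:M_E expansion}, groups the $n$ linear index sets $E_v\sbe[n-1]$ arising from the rotations into orbits under cyclic shift in $[n]$, and that each such orbit of an $n$-subset, once one accounts for the ``$\cap[n-1]$'' truncation in~\eqref{eq:monomial to cyclic monomial}, contributes exactly one $M^{\cyc}_{n,E}$ with the stated power of $2$. This uses the observation (parallel to Remark~\ref{remark: cPk[w]}) that $\cPk v$ for the rotation $v$ starting at position $k$ is $(\cPk w) + (1-k) \pmod n$, so that as $v$ ranges over $[w]$ the pairs $(\Pk v, \text{shift})$ sweep out precisely the data needed.

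\textbf{Main obstacle.} I expect the genuinely delicate step to be the boundary bookkeeping: relating $\Pk v$ (a subset of $[2,n-1]$, with positions $1$ and $n$ structurally excluded) to the cyclic peak set $\cPk v\sbe[n]$, and tracking how the truncation $(E-e)\cap[n-1]$ in Lemma~\eqref{eq:monomial to cyclic monomial} interacts with the summation condition $\cPk w\sbe E\cup(E+1)$. In particular one must check that no spurious contributions or cancellations occur at the ``seam'' between position $n$ and position $1$, and that the exponent $2^{|E|}$ (cyclic side) versus $2^{|E_v|+1}$ (linear side) balances because exactly one extra element is gained when passing from the truncated linear index to the full cyclic index. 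Verifying this carefully — ideally by the coefficient-extraction argument above, where everything is reduced to a clean count of $n$-subsets $E$ with a prescribed truncation — is where the real work lies; the rest is formal manipulation using results already in the excerpt.
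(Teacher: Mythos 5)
Your proposal follows essentially the same route as the paper's proof: decompose via $\Delta_{[w]}^{\cyc}=\sum_{v\in[w]}\Delta_v$, expand both sides into ordinary monomial quasi-symmetric functions using Stembridge's expansion~\eqref{eq:M_E expansion} and the monomial-to-cyclic-monomial lemma~\eqref{eq:monomial to cyclic monomial}, and match coefficients of each $M_{n,L}$ by a bijection between pairs $(E,e)$ with $(E-e)\cap[n-1]=L$ and rotations $v$ with $\Pk v\sbe L\cup(L+1)$ (the paper's map is exactly $(F,f)\mapsto f$ with inverse $f\mapsto((L+f)\cup\{f\},f)$, giving $|E|=|L|+1$ and hence the matching powers of $2$). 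The boundary bookkeeping you flag as the main obstacle is precisely what the paper resolves via the multiset identity $\{\{\Pk v:v\in[w]\}\}=\{\{(\cPk w-i)\cap[2,n-1]:i\in[n]\}\}$ and the check that the elements $1,n$ dropped by the truncation cause no discrepancy.
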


\begin{proof}
The independence of representatives is a result of the following two observations:
\begin{enumerate}
    \item[(a)] If $E$ and $E'$ only differ by a cyclic shift, one has $|E|=|E'|$ and $M_{n,E}^{cyc}=M_{n,E'}^{cyc}$.
    \item[(b)] For two representatives $w$ and $w'$ of $[w]$, 
\[
\{E\subseteq[n]:\cPk(w)\sbe E\cup(E+1)\}=\{E'\sbe[n]:\cPk(w')\sbe E'\cup(E'+1)\}+i,
\]
for some $i\in[n]$, namely, the two sets only differ by a cyclic shift.
\end{enumerate}

Now fix a representative $w$ of $[w]$. We rewrite both sides of equation~\eqref{eq:depend on cPk} as follows:
\begin{equation}\label{eq:alpha_E}
    \mathrm{RHS}\overset{(i)}{=}\sum_{\substack{F\sbe[n]:\\[4pt]\cPk(w)\sbe F\cup(F+1)}}2^{|F|}\sum_{f\in F} M_{n,(F-f)\cap[n-1]}\overset{(ii)}{=}\sum_{E\sbe[n-1]}2^{|E|+1}\alpha_E M_{n,E}
\end{equation}
where $\alpha_E=\#A_E$, with 
$$A_E=\{(F,f): f\in F\sbe[n]\text{ with } \cPk(w)\sbe F\cup(F+1),\;E=(F-f)\cap[n-1]\}.$$ 
Here equality $(i)$ is a result of applying equation~\eqref{eq:monomial to cyclic monomial} to transform from cyclic monomial to monomial, while equality $(ii)$ is obtained by picking a $M_{n,E}$ and calculating the coefficient. It is noted that in equality $(ii)$, for each qualified pair $(F,f)\in A_E$, since $f\in F$, we have $n\in F-f$.
As a result, $E=(F-f)\cap[n-1]=(F-f)\setminus \{n\}$, or equivalently, $F=(E+f)\cup\{f\}$. Therefore their cardinalities satisfy $|F|=|E|+1$.
\begin{equation}\label{eq:beta_E}
    \mathrm{LHS}\overset{(i)'}{=}\sum_{v\in[w]}\Delta_v\overset{(ii)'}{=}\sum_{v\in[w]}\sum_{\substack{E\sbe[n-1]:\\[4pt] \Pk(v)\sbe E\cup(E+1)}}2^{|E|+1}M_{n,E}\overset{(iii)'}{=}\sum_{E\sbe[n-1]}\beta_E 2^{|E|+1}M_{n,E}
\end{equation}
where $\beta_E=\#B_E$ if we set $$B_E=\{i\in[n]: (\cPk w-i)\cap[2,n-1]\sbe E\cup(E+1)\}.$$
Equality $(i)'$ follows from equation~\eqref{e-Dsum}. Equality $(ii)'$ is obtained by applying equation~\eqref{eq:M_E expansion} and expressing the weight enumerators $\Delta_\pi$ in terms of monomial quasi-symmetric functions. 

As for equality $(iii)'$, notice that by interchanging the order of double summation, we have
\[
\sum_{v\in[w]}\sum_{\substack{E\sbe[n-1]:\\[4pt] \Pk(v)\sbe E\cup(E+1)}}2^{|E|+1}M_{n,E}=\sum_{E\sbe[n-1]}\gamma_E 2^{|E|+1}M_{n,E}
\]
where $\gamma_E=\#C_E$ if we set 
\[
C_E=\{v\in[w]: \Pk v\sbe E\cup(E+1)\}.
\]
From the observation 
\[
\{\{\,\Pk(v):v\in[w]\,\}\}=\{\{\,(\cPk w-i)\cap[2,n-1]:i\in[n]\,\}\}
\]
we obtain $\#C_E=\#B_E$ and hence $\gamma_E=\beta_E$. This proves that the equality $(iii)'$ holds.

By comparing equations~\eqref{eq:alpha_E} and \eqref{eq:beta_E}, it suffices to prove $\alpha_E=\beta_E$ for every $E\sbe[n-1]$, or equivalently, to construct a bijection between $A_E$ and $B_E$.

Set $\theta_E:A_E\to B_E$ as $\theta_E(F,f)=f$. To prove that this map is well-defined, it suffices to show that for each $(F,f)\in A_E$, we have $f\in B_E$. It follows from the definition of $A_E$ that $F-f=E\cup\{n\}$. Apply the operation on both sides of the equation $\cPk(w)\sbe F\cup(F+1)$, we get
$$\cPk(w)-f\sbe (F-f)\cup(F-f+1)=E\cup(E+1)\cup\{1,n\}.$$
Hence $(\cPk w-f)\cap[2,n-1]\sbe E\cup(E+1)$ and $f\in B_E$.

Conversely, define $\sigma_E:B_E\to A_E$ by $\sigma_E(f)=(\,(E+f)\cup\{f\},f\,)$. One can similarly check that this map is well-defined by the strategy of verifying $\theta_E$. 

It is straightforward to verify that $\theta_E$ and $\sigma_E$ are inverse, hence we get a bijection between $A_E$ and $B_E$. This finishes the proof.
\end{proof}

\begin{remark}
As a direct corollary of the above proposition, $\Delta_{[\vec{D}]}^{\cyc}$ is a homogeneous cyclic quasi-symmetric function of degree $|w|$, and that $\Delta_{[w]}^{\cyc}$ depends only on $\cPk[w]$, or equivalently from Remark~\ref{remark: cPk[w]}, $\cPk w$ for any representative $w$.
\end{remark}

\begin{example}
Let us compute an example to examine the validity of the previous proposition.  In particular, 
suppose $w=1243$, $[w]=\{1243,3124,4312,2431\}$ and $\cPk(w)=\{3\}$; $\pi=1324$, $[\pi]=\{1324,4132,2413,3241\}$ and $\cPk(\pi)=\{2,4\}$.

If we were to use the Proposition~\ref{prop:crucial} for calculation, we first need all possible choices of $E\subseteq[4]$ satisfying $\{3\}=\cPk w\sbe E\cup(E+1)$, which are
\begin{equation*}
    \{1,2,3,4\},\{1,2,3\},\{1,2,4\},\{1,3,4\},\{2,3,4\},\{1,2\},\{1,3\},\{2,3\},\{2,4\},\{3,4\},\{2\},\{3\},
\end{equation*}
with corresponding monomial cyclic quasi-symmetric functions
\[ \arraycolsep=1.4pt\def\arraystretch{1.4}
    \begin{array}{ccl}
     M^{cyc}_{(1,1,1,1)} &=&M^{cyc}_{4,\{1,2,3,4\}},  \\[1ex]
     M^{cyc}_{(2,1,1)} &=& M^{cyc}_{4,\{1,2,3\}}=M^{cyc}_{4,\{1,2,4\}}=M^{cyc}_{4,\{1,3,4\}}=M^{cyc}_{4,\{2,3,4\}}, \\[1ex]
     M^{cyc}_{(3,1)} &=&M^{cyc}_{4,\{1,2\}}=M^{cyc}_{4,\{2,3\}}=M^{cyc}_{4,\{3,4\}}, \\[1ex]
     M^{cyc}_{(2,2)} &=&M^{cyc}_{4,\{1,3\}}=M^{cyc}_{4,\{2,4\}}, \\[1ex]
     M^{cyc}_{(4)} &=&M^{cyc}_{4,\{2\}}=M^{cyc}_{4,\{3\}}. \\
    \end{array}
    \]
Applying equation~\eqref{eq:depend on cPk}, we have
\[
\Delta_{[w]}^{cyc}
    =2^4M^{cyc}_{(1,1,1,1)}+4\cdot 2^3M^{cyc}_{(2,1,1)}+3\cdot 2^2M^{cyc}_{(3,1)}+2\cdot 2^2M^{cyc}_{(2,2)}+2\cdot 2M^{cyc}_{(4)}.
\]

Similarly for the consideration of $\pi$, all possible choices of $E\subseteq[4]$ satisfying $\{2,4\}=\cPk \pi\sbe E\cup(E+1)$ are as follows:
\begin{equation*}
    \{1,2,3,4\},\{1,2,3\},\{1,2,4\},\{1,3,4\},\{2,3,4\},\{1,3\},\{1,4\},\{2,3\},\{2,4\},
\end{equation*}
hence by equation~\eqref{eq:depend on cPk},
\[
\Delta_{[\pi]}^{cyc}
    = 2^4M^{cyc}_{(1,1,1,1)}+4\cdot 2^3M^{cyc}_{(2,1,1)}+2\cdot 2^2M^{cyc}_{(3,1)}+2\cdot 2^2M^{cyc}_{(2,2)}.
\]
It follows from the calculation above that
\[
\Delta_{[w]}^{cyc}+\Delta_{[\pi]}^{cyc}=2\cdot 2^4M^{cyc}_{(1,1,1,1)}+8\cdot 2^3M^{cyc}_{(2,1,1)}+5\cdot 2^2M^{cyc}_{(3,1)}+4\cdot 2^2M^{cyc}_{(2,2)}+2\cdot 2M^{cyc}_{(4)},
\]
which is exactly the same enumeration as what we obtained in the Example~\ref{ex:enumerate toric}. This example verifies the validity of previous proposition.
\end{example}

\subsection{Algebra of cyclic peaks}

We say $S$ is a {\em cyclic peak set in $[n]$} if there exists some $w\in\mathcal{S}_n$ with $\cPk w=S$. It follows from Proposition~\ref{prop:crucial} that, for any cyclic peak set $S$ in $[n]$, we can define an associated cyclic quasi-symmetric function by
\begin{equation*}
    K^{\cyc}_S:=\Delta^{\cyc}_{[w]},
\end{equation*}
for any permutation $w$ with $\cPk w=S$. One can observe $\Delta^{\cyc}_{[w]}=K^{\cyc}_{\cPk w}$ and rewrite equation~\eqref{eq:cyclic enumerator decomp} as
\[
\Delta^{\cyc}_{[\vec{D}]}=\sum_{[w]\in \mathcal{L}^{\tor}([\vec{D}])}K^{\cyc}_{\cPk w}.
\]
Moreover, it follows from the formula \eqref{e-KK} that $K^{\cyc}_{S}$ can be expressed in terms of quasi-symmetric functions $\{K_T\}$ as
\[
K^{\cyc}_{S}=\sum_{\sigma\in[w]} K_{\Pk\sigma},
\]
where $\cPk w=S$.

Let $\Lambda_n$ denote the space of cyclic quasi-symmetric functions spanned by $K^{\cyc}_S$ where $S$ ranges over cyclic peak sets in $[n]$, and set $\Lambda=\oplus_{n\geq 0}\Lambda_n$. We now will show that $\Lambda$ is an algebra by proving that the product of $K^{\cyc}_U$ and $K^{\cyc}_T$ is a linear combination of $K^{\cyc}_S$'s, and we call $\Lambda$ the {\em algebra of cyclic peaks}. 

\begin{lem}
The $K^{\cyc}_S$ are linearly independent, where $S$ are distinct cyclic peak sets up to cyclic shift.
\end{lem}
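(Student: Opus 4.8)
The plan is to reduce the statement, via Stembridge's theory, to a purely combinatorial fact about certain nonnegative integer vectors, and then to prove linear independence of those vectors by a ``leading coefficient'' argument graded by cardinality.

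\textbf{Step 1: reduction to the linear peak functions.} Fix $n$ and a cyclic peak set $S$ in $[n]$, realised as $S=\cPk w$. The identity $K^{\cyc}_{S}=\sum_{\sigma\in[w]}K_{\Pk\sigma}$ recorded just above, together with the multiset identity $\{\{\,\Pk\sigma:\sigma\in[w]\,\}\}=\{\{\,(\cPk w-i)\cap[2,n-1]:i\in[n]\,\}\}$ used in the proof of Proposition~\ref{prop:crucial}, yields
\[
K^{\cyc}_{S}=\sum_{i\in[n]}K_{(S-i)\cap[2,n-1]}=\sum_{T}\mu_{[S]}(T)\,K_{T},\qquad
\mu_{[S]}(T):=\#\{\,i\in[n]:(S-i)\cap[2,n-1]=T\,\},
\]
the last sum running over peak sets $T$ of $[n]$; here $\mu_{[S]}$ depends only on the cyclic shift class $[S]$, and every index $(S-i)\cap[2,n-1]$ is indeed a peak set. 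Since Stembridge proved that the $K_{T}$, over peak sets $T$ of $[n]$, are linearly independent in $\QSym_{n}$ (they form a basis of $\Pi_{n}$) \cite{S:epp}, it suffices to prove that the vectors $\big(\mu_{[S]}(T)\big)_{T}$ are linearly independent as $[S]$ ranges over the cyclic peak classes in $[n]$. (One could instead argue entirely inside $\cQSym_{n}$ using the $M^{\cyc}$-expansion of Proposition~\ref{prop:crucial}; the combinatorics are the same.)

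\textbf{Step 2: the leading-coefficient argument.} Order cyclic peak classes by cardinality. Call $i\in[n]$ a \emph{clean cut} of a peak set $S$ if $i\notin S$ and $i+1\notin S$; then $(S-i)\cap[2,n-1]$ equals the shift $S-i$ and lies in $[2,n-1]$, so it has cardinality $|S|$. A peak set $S$ has a clean cut unless its complement in the $n$-cycle is an independent set, which (since $S$ itself is independent) forces $n$ even and $S=\{1,3,\dots,n-1\}$ up to cyclic shift. Now suppose $\sum_{[S]}c_{[S]}\big(\mu_{[S]}(T)\big)_{T}=0$ nontrivially, and let $[S_{0}]$ be a class with $c_{[S_{0}]}\neq0$ of maximal cardinality $p$. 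If $S_{0}$ is \emph{not} the exceptional set above, choose a clean cut and put $T_{0}=(S_{0}-i)\cap[2,n-1]$, a shift of $S_{0}$ with $|T_{0}|=p$. Then $\mu_{[S_{0}]}(T_{0})>0$, while $\mu_{[S']}(T_{0})=0$ for every other occurring class $[S']$: if $|S'|>p$ then $c_{[S']}=0$ by maximality; if $|S'|=p$ and $\mu_{[S']}(T_{0})\neq0$ then some shift of $S'$ equals $T_{0}$, so $[S']=[T_{0}]=[S_{0}]$; and if $|S'|<p$ then $T_{0}$, of cardinality $p$, is too large to be contained in any shift of $S'$. Hence $0=c_{[S_{0}]}\mu_{[S_{0}]}(T_{0})$, a contradiction. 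So the top class occurring must be the exceptional ``zigzag'' $[S_{0}]=[\{1,3,\dots,n-1\}]$.

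\textbf{Step 3: the exceptional class — the main obstacle.} It remains to rule out a relation whose unique maximal-cardinality term is the zigzag $[S_{0}]$ ($n$ even, $p=n/2$); every other occurring class then has cardinality $<n/2$. The difficulty is that $\mu_{[S_{0}]}$ is extremely sparse: since $\{1,3,\dots,n-1\}$ has period $2$ in the $n$-cycle, $\mu_{[S_{0}]}$ is supported on only the two coordinates $\{3,5,\dots,n-1\}$ and $\{2,4,\dots,n-2\}$, which are cyclic shifts of each other and hence represent a single class $[T^{\ast}]$ of cardinality $n/2-1$; one checks those two coordinates are hit by no classes other than $[S_{0}]$ and $[T^{\ast}]$, so they deliver only the single equation $\tfrac{n}{2}c_{[S_{0}]}+c_{[T^{\ast}]}=0$. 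To finish one must bring in the \emph{non-leading} (cardinality $<n/2-1$) coordinates of $\mu_{[T^{\ast}]}$ and, more generally, disentangle the short ``chain'' of near-maximal peak sets $\{2k+1,2k+3,\dots,n-1\}$ ($k=0,1,\dots$): consecutive chain members are coupled only through their leading coordinates, but each non-extremal chain member turns out to possess a lower coordinate that is hit by no other class occurring in the relation, which forces its coefficient to vanish and then collapses the whole chain, yielding $c_{[S_{0}]}=0$. Pinning down this last combinatorial bookkeeping — and packaging Steps~2 and~3 into a clean induction on $n$ — is the part that requires real care; everything upstream of it is routine once the reduction in Step~1 is in place.
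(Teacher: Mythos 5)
Your Steps 1 and 2 are sound, and they follow a genuinely different route from the paper: you push everything down to Stembridge's linear peak functions $K_T$ (using $K^{\cyc}_S=\sum_{i\in[n]}K_{(S-i)\cap[2,n-1]}$ and the known linear independence of the $K_T$), whereas the paper stays inside $\cQSym_n$ and shows that the transition matrix from $\{K^{\cyc}_S\}$ to the monomial basis $\{M^{\cyc}_{n,L}\}$ is triangular with respect to a cardinality-then-lexicographic order, using the leading term $M^{\cyc}_{n,f(S)}$ with $f(S)=\{s_1,s_2-1,\dots,s_k-1\}$. The key structural difference is that the paper's leading term has the \emph{same} cardinality as $S$ for every class, including the alternating one, so no exceptional case ever arises; your leading term $(S-i)\cap[2,n-1]$ drops an element precisely when $S$ has no clean cut, which is what creates your Step 3.

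Step 3 is a genuine gap, and I do not believe the sketched repair works as described. The claim that each non-extremal member of the chain $\{2k+1,2k+3,\dots,n-1\}$ ``possesses a lower coordinate that is hit by no other class occurring in the relation'' is unsubstantiated, and small cases contradict it. Take $n=6$: the four classes are $[\{1\}]$, $[\{1,3\}]$, $[\{1,4\}]$, $[\{1,3,5\}]$, and the lower (singleton) coordinates of $\mu_{[\{1,3\}]}$ are $\{2\},\{3\},\{4\},\{5\}$, every one of which is also hit by $[\{1\}]$, and two of which are also hit by $[\{1,4\}]$ --- none is private. The dependence is in fact resolved \emph{bottom-up}: the coordinate $\emptyset$ isolates $c_{[\{1\}]}$, only then does $\{2\}$ kill $c_{[\{1,3\}]}$, and only then do the two top coordinates kill the zigzag coefficient. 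So after the single equation $\tfrac{n}{2}c_{[S_0]}+c_{[T^\ast]}=0$, the argument must leave the chain entirely and control classes of much smaller cardinality; the top-down induction of Step 2 does not organize this, and no mechanism for it is given. Either supply a genuine bottom-up (or two-sided) triangularization for the cardinality-$(\le n/2-1)$ classes interacting with the chain, or switch to the paper's choice of leading term, which sidesteps the problem.
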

\begin{proof}
In this proof, we totally order the subsets of $[n-1]$, 
first by cardinality, then by the lexicographic order. We therefore have
\[
\emptyset\lhd\{1\}\lhd\{2\}\lhd\cdots\lhd\{n-1\}\lhd\{1,2\}\lhd\{1,3\}\lhd\cdots\lhd\{1,n-1\}\lhd\{2,3\}\lhd\{2,4\}\lhd\cdots.
\]
One can similar order the compositions of $n$ as
 \[
(n)\lhd(1,n-1)\lhd(2,n-2)\lhd\cdots\lhd(1,1,n-2)\lhd(1,2,n-3)\lhd\cdots\lhd(1,n-2,1)\lhd\cdots.
\]

Notice that $K^{\cyc}_S=K^{\cyc}_{S'}$ if two sets $S$ and $S'$ only differ by a cyclic shift, we will always assume the index $S$ to be the least among all its cyclic shifts. 
It is not hard to see that $\psi(S)$ is also the least composition among all its cyclic shifts, where $\psi$ is defined by equation \eqref{eq:psi}. 

We now show that the matrix of $\{K^{\cyc}_S\}$ with respect to $\{M^{\cyc}_{n,L}\}$ is a matrix of full rank. 
Then the linear independence of $\{K^{\cyc}_S\}$ will follow immediately from the fact that the monomial cyclic quasi-symmetric functions form a basis of $\cQSym$.

Let us fix $n$ and suppose $\{S_1\lhd S_2\lhd\cdots\lhd S_m\}$ is the set of all distinct cyclic peak sets in $[n]$. Given a $K^{\cyc}_S$, suppose $|S|=k$ and $S=\{1=s_1<s_2<\cdots<s_k\}$. Here all indices are taken modulo $k$ unless otherwise noted. To each $K_S^{\cyc}$ we associate a corresponding monomial quasi-symmetric function by $F(K^{\cyc}_S)=M^{\cyc}_{n,f(S)}$, where
\[
f(S)=\{s_1,s_2-1,\cdots,s_k-1\}.
\]
Since $S$ is a cyclic peak set, elements in $f(S)$ are distinct. So $f(S)$ is a set and $F$ is well-defined. If one denotes $\psi(S)$ by $(\al_1,\al_2,\dots,\al_k)$, then 
\[
\psi(f(S))=(\al_1-1,\al_2,\dots,\al_{k-1},\al_k+1).
\]
Also notice that the assumption of $S$ being the least ensures that $f(S)$ is also the least among all its cyclic shifts. It follows that $F$ is injective.

\textbf{Claim:}  The matrix of $\{K^{\cyc}_S\}$ related to corresponding $\{M^{\cyc}_{n,f(S)}\}$ is invertible. Moreover, this matrix is an upper triangular matrix with nonzero diagonal entries.
\vspace{3pt}

Let $A_{i,j}$ denote the coefficient of $M^{\cyc}_{n,f(S_j)}$ in the expression of $K^{\cyc}_{S_i}$ and set $A=\{A_{i,j}\}$ to be the $m\times m$ matrix that we need to consider. To prove that $A$ is an upper triangular matrix, it suffices to show that $A_{i,j}=0$ if $i>j$, or equivalently, the term $M^{\cyc}_{n,f(S_j)}$ does not appear in the expression of $K^{\cyc}_{S_i}$ if $S_j\lhd S_i$. 

Suppose towards a contradiction that $A_{i,j}\neq0$ for some $i>j$. It then follows from the Proposition~\ref{prop:crucial}
that there exists some $E\subseteq[n]$ such that
\[
S_i\subseteq E\cup(E+1)\quad \text{and}\quad M^{\cyc}_{n,f(S_j)}=M^{\cyc}_{n,E}.
\]
Since $S_i$ is a cyclic peak set, $e$ and $e+1$ cannot be in $S_i$ at the same time. It then follows from the assumption $S_i\subseteq E\cup(E+1)$ that $|E|\geq |S_i|$. The other condition $i>j$ implies $S_i\rhd S_j$, hence $|S_i|\geq |S_j|=|f(S_j)|=|E|$. Therefore, we only need to consider the cases when $S_i$ and $S_j$ have the same cardinality. 

Suppose 
\[
\psi(S_i)=(\alpha_1,\al_2,\dots,\al_k),\quad \psi(S_j)=(\be_1,\be_2,\dots,\be_k),\quad \psi(E)=(\be_1',\be_2',\dots,\be_k').
\]
Then $\psi(E)=(\be_1',\be_2',\dots,\be_k')$ is a cyclic shift of $\psi(f(S_j))=(\be_1-1,\be_2,\dots,\be_{k-1},\be_k+1)$. Assume 
\[
(\be_q',\be_{q+1}',\dots,\be_{q-1}')=(\be_1-1,\be_2,\dots,\be_{k-1},\be_k+1),
\]
for some $q\in[k]$. Notice from the condition $S_i\subseteq E\cup(E+1)$, the cardinality relation $|S_i|=|E|$, and the fact that $S_i$ does not have consecutive elements as a cyclic peak set, each $a\in S_i$ corresponds to a unique $a'\in E$ where $a'$ equals $a$ or $a-1$ (considered modulo $n$). A crucial observation therefore follows:
\begin{align*}
    \text{Suppose $a,b\in S_i$ correspond to $a',b'\in E$ respectively, then $|(a-b)-(a'-b')|\leq 1$.}
\end{align*}
Equivalently, one has, for $r,s\in[k]$,
\begin{equation}\label{inequ3}
    |(\al_r+\cdots+\al_s)-(\be_r'+\cdots+\be_s')|\leq 1.
\end{equation}
In particular, 
\[\al_1\leq\al_q\leq\be_q'+1=(\be_1-1)+1=\be_1,\]
where the first inequality comes from the fact that $S_i$ is the least among its cyclic shifts, and the second one follows from~\eqref{inequ3} by taking  $r=s=q$.
Note that $S_j\lhd S_i$ implies $\psi(S_j)\lhd\psi(S_i)$, so $\be_1\leq \al_1$. Thus, it must be 
\[\al_1=\al_q=\be_q'+1=\be_1.\]
It then follows from the inequality \eqref{inequ3} that for any $r\in[k-1]$,
\[
 \al_q+\cdots+\al_{q+r}\leq \be_q'+\cdots+\be_{q+r}'+1=\be_1+\cdots+\be_{1+r}+\delta_{1+r,k},
\]
where $\delta_{1+r,k}=1$ if $1+r=k$ and $0$ otherwise.

If $\al_{q+r}=\be_{1+r}$ for every $r\in[k-2]$, then $\psi(S_i)$ is a cyclic shift of $\psi(S_j)$, 
which contradicts to our assumption $S_i\rhd S_j$.
Now assume $t\in[k-2]$ is the smallest index such that $\al_{q+t}<\be_{1+t}$, then 
\[
(\al_{q},\dots,\al_{q+t})\lhd(\be_1,\dots,\be_{1+t}).
\]
Combined with the inequality $(\al_1,\dots,\al_{1+t})\unlhd(\al_{q},\dots,\al_{q+t})$, as $(\al_1,\dots,\al_k)$ is the least among all its cyclic shifts, one has 
\[
(\al_1,\dots,\al_{1+t})\lhd(\be_1,\dots,\be_{1+t}).
\]
This implies that $\psi(S_i)\lhd\psi(S_j)$, which is a contradiction to the assumption $S_i\rhd S_j$.
Moreover, it is not hard to see that $A_{i,i}\neq 0$ as $S_i\subseteq f(S_i)\cup(f(S_i)+1)$, therefore the diagonal entries are nonzero.
This proves the claim, hence the lemma.
\end{proof}

Define {\em the union of digraphs} $\vec{D}\biguplus\vec{E}$ to be the digraph with vertices $V(\vec{D})\cup V(\vec{E})$ and arcs $A(\vec{D})\cup A(\vec{E})$. Next result follows easily from the definition.
\begin{prop}
Suppose $\vec{D}$ and $\vec{E}$ are two DAGs on disjoint subsets of $\mathbb{P}$, then
\begin{equation}\label{eq:joint union of DAGs}
    \Delta^{\cyc}_{[\vec{D}\biguplus\vec{E}]}=\Delta^{\cyc}_{[\vec{D}]}\cdot\Delta^{\cyc}_{[\vec{E}]}.
\end{equation}
\end{prop}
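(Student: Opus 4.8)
The plan is to reduce the statement to a purely combinatorial fact about enriched partitions and the structure of toric extensions. First I would unwind the definitions: an enriched $\vec{D}\biguplus\vec{E}$-partition is a function $f:V(\vec{D})\cup V(\vec{E})\to\mathbb{P}'$, and since $\vec{D}$ and $\vec{E}$ have disjoint vertex sets and no arcs run between them, such an $f$ is precisely a pair $(f|_{V(\vec{D})}, f|_{V(\vec{E})})$ where each restriction is an enriched partition of the respective DAG. The flip operations also act independently on the two vertex sets, so $[\vec{D}\biguplus\vec{E}]$ consists exactly of the digraphs $\vec{D}'\biguplus\vec{E}'$ with $\vec{D}'\in[\vec{D}]$ and $\vec{E}'\in[\vec{E}]$; thus a function is an enriched toric $[\vec{D}\biguplus\vec{E}]$-partition if and only if its restrictions are enriched toric $[\vec{D}]$- and $[\vec{E}]$-partitions respectively. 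This gives a bijection $\mathcal{E}^{\tor}([\vec{D}\biguplus\vec{E}])\cong \mathcal{E}^{\tor}([\vec{D}])\times\mathcal{E}^{\tor}([\vec{E}])$.

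Next I would check that this bijection is weight-preserving in the sense needed. The weight assigned to $f$ in $\Delta^{\cyc}_{[\vec{D}\biguplus\vec{E}]}$ is $\prod_{i\in V(\vec{D})\cup V(\vec{E})} x_{|f(i)|}$, which factors as $\left(\prod_{i\in V(\vec{D})} x_{|f(i)|}\right)\left(\prod_{i\in V(\vec{E})} x_{|f(i)|}\right)$ precisely because the vertex sets are disjoint. Summing over all enriched toric $[\vec{D}\biguplus\vec{E}]$-partitions and using the product decomposition of the index set, the sum factors as a product of the two enumerators, yielding \eqref{eq:joint union of DAGs}. One small point worth stating carefully is that the variable sets used for $\vec{D}$ and $\vec{E}$ are the same (both enumerators live in the same power series ring $\mathbb{Q}[[x_1,x_2,\ldots]]$), so the product on the right is an honest product of power series; the disjointness hypothesis is needed only so that a partition of the union restricts compatibly, not for any separation of variables.

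The main obstacle, modest as it is, lies in justifying that $[\vec{D}\biguplus\vec{E}]=\{\vec{D}'\biguplus\vec{E}':\vec{D}'\in[\vec{D}],\vec{E}'\in[\vec{E}]\}$: one must observe that a vertex $i_0\in V(\vec{D})$ is a source or sink of $\vec{D}\biguplus\vec{E}$ if and only if it is a source or sink of $\vec{D}$ (again using that there are no arcs between the two vertex sets), so a flip at $i_0$ in the union is the same as a flip at $i_0$ in $\vec{D}$ leaving $\vec{E}$ untouched, and conversely any sequence of flips decomposes into those affecting $V(\vec{D})$ and those affecting $V(\vec{E})$, which commute. Granting this, the argument is a one-line consequence of the Fubini-type factorization of the defining sum. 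I would present the proof as: (i) the vertex-set and arc-set decomposition of flips, giving the description of $[\vec{D}\biguplus\vec{E}]$; (ii) the induced bijection on enriched toric partitions; (iii) the factorization of weights and hence of the generating function.
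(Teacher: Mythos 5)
Your proof is correct, and it is exactly the argument the paper has in mind: the paper states this proposition without proof, remarking only that it "follows easily from the definition," and your three steps (flips act independently on the two vertex sets, hence $[\vec{D}\biguplus\vec{E}]$ factors; restriction gives a bijection $\mathcal{E}^{\tor}([\vec{D}\biguplus\vec{E}])\cong\mathcal{E}^{\tor}([\vec{D}])\times\mathcal{E}^{\tor}([\vec{E}])$; weights multiply) supply precisely the omitted details.
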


The proposition above provides us a proof of $\Lambda$ being an algebra from the combinatorial aspect.

\begin{prop}
$\Lambda$ is a graded subring of $\cQSym$.
\end{prop}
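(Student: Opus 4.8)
The plan is to verify the two defining properties of a graded subring: that $\Lambda$ is a graded subspace of $\cQSym$, and that it is closed under multiplication and contains the unit. The grading is almost immediate. By the remark following Proposition~\ref{prop:crucial}, for any $w\in\mathcal{S}_n$ the enumerator $\Delta^{\cyc}_{[w]}$ is a homogeneous cyclic quasi-symmetric function of degree $n$, so $K^{\cyc}_S=\Delta^{\cyc}_{[w]}\in\cQSym_n$ whenever $S$ is a cyclic peak set in $[n]$; hence $\Lambda_n\subseteq\cQSym_n$ and the sum $\Lambda=\bigoplus_{n\ge0}\Lambda_n$ is direct. The unit $1$ lies in $\Lambda_0$, being $K^{\cyc}_\emptyset$ in degree $0$, the enumerator of the empty DAG.

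The heart of the matter is closure under products, and here the combinatorial identity~\eqref{eq:joint union of DAGs} does the work. By bilinearity it suffices to show $K^{\cyc}_U\cdot K^{\cyc}_T\in\Lambda_{p+q}$ whenever $U$ is a cyclic peak set in $[p]$ and $T$ is a cyclic peak set in $[q]$. I would choose $u\in\mathcal{S}_p$ with $\cPk u=U$, and a linear permutation $v$ of the disjoint label set $\{p+1,\dots,p+q\}$ whose relabeled cyclic peak set is $T$, and regard $u,v$ as total linear orders $\vec{D},\vec{E}$ on these disjoint vertex sets. Since the weight enumerator depends only on the relative order of labels, $K^{\cyc}_U=\Delta^{\cyc}_{[\vec{D}]}$ and $K^{\cyc}_T=\Delta^{\cyc}_{[\vec{E}]}$, so~\eqref{eq:joint union of DAGs} gives
\[
K^{\cyc}_U\cdot K^{\cyc}_T=\Delta^{\cyc}_{[\vec{D}\biguplus\vec{E}]}.
\]
Now $\vec{D}\biguplus\vec{E}$ is a DAG on a $(p+q)$-element subset of $\mathbb{P}$, and applying the Fundamental Lemma of enriched toric partitions together with the identity $\Delta^{\cyc}_{[w]}=K^{\cyc}_{\cPk w}$ yields
\[
\Delta^{\cyc}_{[\vec{D}\biguplus\vec{E}]}=\sum_{[w]\in\mathcal{L}^{\tor}([\vec{D}\biguplus\vec{E}])}K^{\cyc}_{\cPk w}.
\]
Each $[w]$ occurring here is a total cyclic order on a $(p+q)$-element label set, so after transporting along the order-isomorphism onto $[p+q]$ each $\cPk w$ becomes a genuine cyclic peak set in $[p+q]$. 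Thus the right-hand side is an $\mathbb{N}$-linear combination of generators of $\Lambda_{p+q}$, which proves $K^{\cyc}_U\cdot K^{\cyc}_T\in\Lambda_{p+q}$ and hence $\Lambda_p\cdot\Lambda_q\subseteq\Lambda_{p+q}$.

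The only genuinely delicate point is the label bookkeeping: one must check that relabeling a DAG by an order-preserving bijection of its vertex set changes neither its toric-extension structure nor the cyclic peak sets of those extensions, so that passing from the disjoint union $\{1,\dots,p\}\sqcup\{p+1,\dots,p+q\}$ to the canonical ground set $[p+q]$ is harmless; this is precisely the content of the remark closing Section~\ref{etp}. I would spell this out once and then apply it freely. It is worth noting that the linear independence of the $K^{\cyc}_S$ from the preceding lemma is not needed for this proposition — it only guarantees that the multiplication structure constants in the basis $\{K^{\cyc}_S\}$ are well defined — so the argument above is self-contained modulo the earlier results.
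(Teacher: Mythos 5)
Your proposal is correct and follows essentially the same route as the paper: both arguments reduce closure under multiplication to the disjoint-union identity~\eqref{eq:joint union of DAGs} after relabeling the second permutation to $\{p+1,\dots,p+q\}$, and then expand $\Delta^{\cyc}_{[\vec{D}\biguplus\vec{E}]}$ via the Fundamental Lemma~\ref{FLECDP} as a nonnegative integer combination of the $K^{\cyc}_{\cPk\sigma}$. Your additional remarks on the grading, the unit, and the harmlessness of order-preserving relabeling are correct elaborations of points the paper leaves implicit.
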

\begin{proof}
As a subspace of $\cQSym$, $\Lambda$ naturally inherits the addition and multiplication operations from $\cQSym$.

To show that $\Lambda$ is closed under multiplication, take $K^{\cyc}_U\in\Lambda_m$ and $K^{\cyc}_T\in\Lambda_n$, where $U$ and $T$ are cyclic peak sets in $[m]$ and $[n]$ respectively, namely, there exist $\pi\in\mathcal{S}_m$ and $w\in\mathcal{S}_n$ such that $\cPk\pi=U,\;\cPk w=T$. For the purpose of constructing two corresponding disjoint DAGs, we standardize $w=w_1w_2\ldots w_n\in\mathcal{S}_n$ to $\{m+1,m+2,\ldots,m+n\}$, that is, construct $w'=w'_1w_2'\ldots w_n'$ where $w_i'=w_i+m$ for $i\in[n]$. As a consequence of equation~\eqref{eq:joint union of DAGs}, we have
\begin{equation}
    K^{\cyc}_U\cdot K^{\cyc}_T=\Delta^{\cyc}_{[\pi]}\cdot\Delta^{\cyc}_{[w']}=\Delta^{\cyc}_{[\pi]\biguplus[w']}=\sum_{\sigma\in \mathcal{L}^{\tor}([\pi]\biguplus[w'])}K^{\cyc}_{\cPk \sigma}.
\end{equation}
This completes the proof.
\end{proof}

In terms of another basis of $\cQSym$, the fundamental cyclic quasi-symmetric functions,  $K^{\cyc}_S$ has the following expansion.

\begin{prop}\label{expand K in F}
For any cyclic peak set $S$ in $[n]$, we have
\[ 
2^{-|S|}K^{\cyc}_S=\sum_{\substack{E\subseteq[n]:\\[2pt]S\sbe E\triangle (E+1)}}F^{cyc}_{n,E},
\]
where $\triangle$ denotes symmetric difference.
\end{prop}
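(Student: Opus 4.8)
The plan is to derive the fundamental-function expansion directly from the monomial-function expansion of Proposition~\ref{prop:crucial}, exactly paralleling how Stembridge's fundamental expansion (\cite{S:epp}, Proposition 3.5) follows from his monomial expansion (\cite{S:epp}, Proposition 2.2) in the linear case. Starting from
\[
K^{\cyc}_S=\Delta^{\cyc}_{[w]}=\sum_{\substack{E\subseteq[n]:\\[2pt]S\sbe E\cup(E+1)}}2^{|E|}M^{\cyc}_{n,E},
\]
I would substitute the inversion $M^{\cyc}_{n,E}=\sum_{L\supseteq E}(-1)^{|L\setminus E|}F^{\cyc}_{n,L}$ from the Remark following Lemma~\ref{lem: F to cylic F}, interchange the order of summation, and show that for each fixed $L$ the accumulated coefficient $\sum_{E:\,S\sbe E\cup(E+1),\,E\subseteq L}2^{|E|}(-1)^{|L\setminus E|}$ collapses to $2^{|S|}$ when $S\sbe L\triangle(L+1)$ and to $0$ otherwise. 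This coefficient identity is the combinatorial heart of the argument.

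To prove that identity I would fix $L$ and partition $[n]$ into the classes determined by $L$: positions that lie in $L$, positions whose cyclic predecessor lies in $L$, etc. For $E\subseteq L$, the condition $S\subseteq E\cup(E+1)$ forces every element $s\in S$ to satisfy $s\in E$ or $s-1\in E$; since each such constraint only involves membership of at most two elements of $L$, the sum $\sum_E 2^{|E|}(-1)^{|L\setminus E|}$ factors as a product over the ``free'' elements of $L$ (each contributing $2\cdot 1+(-1)\cdot 1=1$, hence no net effect) times a product of local factors over the constraint clusters coming from $S$. When $S\sbe L\triangle(L+1)$ each element $s\in S$ is forced to belong to $E$ (its predecessor $s-1$ is not available in $L$, or is itself forced elsewhere), contributing a factor of $2$ and killing the cancellation; when some $s\in S$ has both $s\in L$ and $s-1\in L$ available, the corresponding local sum telescopes to $0$. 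Summing the $2$'s over $S$ yields the clean factor $2^{|S|}$, and after this the residual sum over $L\supseteq$ (the forced set) equals $\sum_{E\subseteq[n]:\,S\subseteq E\triangle(E+1)}F^{\cyc}_{n,E}$.

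The main obstacle I anticipate is the bookkeeping with the cyclic (modular) indices: unlike Stembridge's linear setting where positions $0$ and $n$ are boundary cases, here the conditions $S\sbe E\cup(E+1)$ and $S\sbe E\triangle(E+1)$ wrap around modulo $n$, so the ``constraint clusters'' attached to $S$ can overlap or link into cycles around $[n]$, and one must check that the local factorization of $\sum_E 2^{|E|}(-1)^{|L\setminus E|}$ still goes through without a residual global sign or correction term. A clean way to handle this is to exploit the representative-independence already established in Proposition~\ref{prop:crucial}: since $S$ is a cyclic peak set it has no two cyclically consecutive elements, so the clusters never chain all the way around, and one may choose a representative (equivalently, a cyclic shift) in which $1\notin S$, reducing the wrap-around to the linear analysis of \cite{S:epp}. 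Once the coefficient identity is checked on this representative, Lemma~\ref{lem: F to cylic F}'s shift-invariance of $F^{\cyc}_{n,E}$ transports the conclusion back to the cyclically symmetric statement, and collecting the factor $2^{|S|}$ on the left side gives the claimed formula.
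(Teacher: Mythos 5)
Your proposal is correct, but it runs the computation in the opposite (M\"obius-dual) direction from the paper, so it is worth comparing the two. The paper starts from the right-hand side and uses only the \emph{positive} expansion $F^{\cyc}_{n,E}=\sum_{F\supseteq E}M^{\cyc}_{n,F}$: the coefficient of $M^{\cyc}_{n,F}$ in $\sum_{S\subseteq E\triangle(E+1)}F^{\cyc}_{n,E}$ is the plain count $\#\{E\subseteq F: S\subseteq E\triangle(E+1)\}$, which the paper evaluates as $2^{|F|-|S|}$ by exhibiting an explicit partition of $F$ into pairs forced by $S$ and free singletons; Proposition~\ref{prop:crucial} then finishes. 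You instead start from the left-hand side, substitute the inclusion--exclusion formula $M^{\cyc}_{n,E}=\sum_{L\supseteq E}(-1)^{|L\setminus E|}F^{\cyc}_{n,L}$, and must evaluate the signed sum $\sum_{E\subseteq L,\,S\subseteq E\cup(E+1)}2^{|E|}(-1)^{|L\setminus E|}$. Your identity is true and your factorization idea works: because $S$ is a cyclic peak set, the clusters $C_s=\{s-1,s\}\cap L$ for $s\in S$ are pairwise disjoint, each free element of $L$ contributes $2-1=1$, and each cluster contributes $1-(-1)^{|C_s|}$, which is $2$ precisely when $|C_s|=1$ (i.e.\ $s\in L\triangle(L+1)$) and $0$ when $|C_s|\in\{0,2\}$; the product is therefore $2^{|S|}$ or $0$ exactly as you claim. (Your phrasing that $s$ itself is ``forced to belong to $E$'' is slightly off --- what is forced is whichever single element of $\{s-1,s\}$ lies in $L$ --- but the computation is unaffected.) The two arguments are zeta/M\"obius transforms of one another on the Boolean lattice; the paper's buys a sign-free direct count, yours buys a one-line local factorization at the cost of managing cancellation. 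Your final worry about wrap-around is unnecessary: the cluster factorization is already cyclic-index-safe since disjointness of the $C_s$ needs only that $S$ has no two cyclically adjacent elements, so no reduction to a representative with $1\notin S$ is required.
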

\begin{proof}
Since $F^{cyc}_{n,E}=\sum_{F\supseteq E} M^{cyc}_{n,F}$, the coefficient of $M^{cyc}_{n,F}$ on the right side of the above expansion is $|\{E\subseteq F: S\sbe E\triangle (E+1)\}|$. 

To count this set, we need the following observations for possible choices of $E$. For each $k\in S\sbe E\triangle (E+1)$, exactly one of $k-1$ and $k$ is in $E$, since if both are in $E$ then $k$ is not in the symmetric difference. It follows from $E\sbe F$ that at least one of $k-1$ and $k$ is in $F$.
So one has the following two cases:
\begin{enumerate}
    \item[(1)] If both $k,k-1\in F$, $E$ must contain exactly one of $k$ or $k-1$. Otherwise, suppose $E$ contains both $k$ and $k-1$, then $k\notin E\triangle (E+1)$ by the definition of symmetric difference. This is a contradiction to the assumption $k\in S\sbe E\triangle (E+1)$. 
    \item[(2)] If only one of $k,k-1\in F$, then $E$ must contain whichever $F$ does.
\end{enumerate}
It is noted that the restriction above only involve two adjacent numbers.

Also notice that if $k\in F$ but neither $k$ nor $k+1$ is in $S$, then $k$ is free to be in $E$ or not. Denote
\begin{align*}
    S_1&=\{k\in S\mid \text{both $k,k-1$ are in $F$}\},\\
    S_2&=\{k\in S\mid k\in E,k-1\notin F\},\\
    S_3&=\{k\in S\mid k\notin E,k-1\in F\}.
\end{align*}
Then we have a set partition
$S=S_1\cup S_2\cup S_3$. Therefore if we denote $s_i=\#S_i$ for $i\in\{1,2,3\}$, we have $|S|=s_1+s_2+s_3$.

Moreover, if we denote 
\[
T=\{k\in F\mid \text{none of $k,k+1$ is in $S$}\},
\]
notice from the definition of peak set $S$, numbers in $S$ are not adjacent. Hence we have the following set partition 
of $F$:
\[
F=S_1\cup(S_1-1)\cup S_2\cup (S_3-1)\cup T.
\]
It follows that $|F|=2s_1+s_2+s_3+t$ with $t=|T|$. Hence, the number of choices for $E$ is 
\[
2^{s_1+t}=2^{s_1+|F|-2s_1-s_2-s_3}=2^{|F|-(s_1+s_2+s_3)}=2^{|F|-|S|}.
\]
So we have
\[
\sum\limits_{\substack{E\subseteq[n]:\\[2pt]S\sbe E\triangle(E+1)}}F^{cyc}_{n,E}=\sum\limits_{\substack{F\subseteq[n]:\\[2pt]S\sbe F\cup(F+1)}}2^{|F|-|S|}M^{cyc}_{n,F}.
\]
By equation~\eqref{eq:depend on cPk}, this quantity is $2^{-|S|}K^{cyc}_S$.
\end{proof}

\subsection{Order polynomials $\Omega^{\cyc}([\vec{D}],m)$}
Given a DAG $\vec{D}$, we can define the {\em order polynomial of enriched $\vec{D}$-partitions}, $\Omega(\vec{D},m)$, by
\[
\Omega(\vec{D},m)=\Delta_{\vec{D}}(1^m),
\]
where $\Delta_{\vec{D}}(1^m)$ means that we set $x_1=\cdots =x_m=1$, and $x_k=0$ for $k>m$. In fact, $\Omega(\vec{D},m)$ counts the number of enriched $\vec{D}$-partitions with absolute value at most $m$.

Moreover, Stembridge computed the corresponding generating function as follows:

\begin{thm}[\cite{S:epp}, Theorem 4.1]\label{t-opstem}
For a given $w\in\mathcal{S}_n$, one has
\begin{equation}\label{eq:WEEP}
    \sum_m\Omega(w,m)t^m=\frac{1}{2}\left(\frac{1+t}{1-t}\right)^{n+1}\left(\frac{4t}{(1+t)^2}\right)^{1+\pk w}.
\end{equation}
\end{thm}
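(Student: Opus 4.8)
The plan is to evaluate $\Omega(w,m)=\Delta_w(1^m)$ using the fundamental expansion of $\Delta_w$ recorded just above (Stembridge, Proposition 3.5), and then sum over $m$. Write $p=\pk w$ and $\Pk w=\{j_1<\cdots<j_p\}\sbe[2,n-1]$. That expansion gives
\[
\Omega(w,m)=2^{p+1}\sum_{\substack{D\sbe[n-1]:\\ \Pk w\sbe D\triangle(D+1)}}F_{n,D}(1^m).
\]
First I would record the standard principal specialization $F_{n,D}(1^m)=\binom{m+n-1-|D|}{n}$, obtained by the usual substitution $i'_k=i_k-\#\{d\in D:d<k\}$, which turns the strict inequalities at the positions of $D$ into weak ones and so counts weakly increasing sequences of length $n$ with values in $[1,m-|D|]$. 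Interchanging the (finite) sum over $D$ with the sum over $m$ and using $\sum_{m\ge0}\binom{m+n-1-j}{n}t^m=t^{j+1}/(1-t)^{n+1}$ (reindex $m=j+1+k$ and apply $\sum_{k\ge0}\binom{k+n}{n}t^k=(1-t)^{-(n+1)}$), I obtain
\[
\sum_m\Omega(w,m)t^m=\frac{2^{p+1}}{(1-t)^{n+1}}\sum_{\substack{D\sbe[n-1]:\\ \Pk w\sbe D\triangle(D+1)}}t^{|D|+1}.
\]

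The heart of the argument is then the combinatorial identity
\[
\sum_{\substack{D\sbe[n-1]:\\ \Pk w\sbe D\triangle(D+1)}}t^{|D|}=(2t)^{p}(1+t)^{\,n-1-2p}.
\]
To prove it I would observe that for $k\in[2,n-1]$ the condition $k\in D\triangle(D+1)$ means that exactly one of $k-1,k$ lies in $D$; hence $\Pk w\sbe D\triangle(D+1)$ says precisely that for each peak $j_i$ the pair $\{j_i-1,j_i\}$ meets $D$ in exactly one element. Because peaks are never adjacent ($j_{i+1}\ge j_i+2$), these $p$ pairs are pairwise disjoint subsets of $[n-1]$, leaving $n-1-2p\ge0$ positions unconstrained. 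Choosing $D$ then reduces to an independent choice on each constrained pair (two options, each adding a single element, contributing $2t$) and on each free position (in $D$ or not, contributing $1+t$), which yields the asserted product. (Equivalently, one can run the same disjoint-pairs argument through the monomial expansion \eqref{eq:M_E expansion}, where the condition $\Pk w\sbe E\cup(E+1)$ lets each constrained pair contribute $2t+t^2$; both routes give the same final generating function, which provides a useful cross-check.)

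Finally I would assemble the pieces: substituting the identity gives
\[
\sum_m\Omega(w,m)t^m=\frac{2^{2p+1}\,t^{p+1}(1+t)^{\,n-1-2p}}{(1-t)^{n+1}},
\]
and a direct simplification of $\tfrac12\big(\tfrac{1+t}{1-t}\big)^{n+1}\big(\tfrac{4t}{(1+t)^2}\big)^{1+p}$, collecting the powers of $2$, $t$, $1+t$ and $1-t$, shows this coincides with the right-hand side of \eqref{eq:WEEP}. The only genuine content is the disjointness of the peak pairs (a consequence of non-adjacency of peaks); everything else is routine formal power-series manipulation, so I expect the only real hazard to be bookkeeping the exponents correctly in this last comparison.
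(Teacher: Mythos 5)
Your derivation is correct. Note, however, that the paper itself does not prove this statement: it is imported verbatim from Stembridge (\cite{S:epp}, Theorem 4.1), so there is no in-paper proof to compare against; what you have written is a self-contained derivation from the quoted $F$-expansion of $\Delta_w$ (\cite{S:epp}, Proposition 3.5) plus routine specialization, which is essentially the classical route. Your steps all check out: $F_{n,D}(1^m)=\binom{m+n-1-|D|}{n}$, the geometric-series identity $\sum_{m}\binom{m+n-1-j}{n}t^m=t^{j+1}/(1-t)^{n+1}$, and the key product formula $\sum_{D\subseteq[n-1]:\,\Pk w\subseteq D\triangle(D+1)}t^{|D|}=(2t)^{\pk w}(1+t)^{n-1-2\pk w}$, whose proof via the pairwise disjoint pairs $\{j_i-1,j_i\}$ (peaks are never adjacent) is sound; assembling gives $2^{2\pk w+1}t^{\pk w+1}(1+t)^{n-1-2\pk w}/(1-t)^{n+1}$, which matches the right-hand side of \eqref{eq:WEEP}. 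Your route also dovetails with what the paper does later: Proposition~\ref{p-orderpoly} gives a bijective $(w,m)$-marking formula for $\Omega(w,m)$ whose generating function is exactly this closed form, so the combinatorial approach there and your analytic specialization corroborate one another. One small caveat: your parenthetical cross-check through the monomial expansion \eqref{eq:M_E expansion} is stated too loosely, since $M_{n,E}(1^m)=\binom{m}{|E|+1}$ makes the denominator $(1-t)^{|E|+2}$ depend on $|E|$; the per-pair weight should be tracked in the variable $u=2t/(1-t)$ (each constrained pair contributing $2u+u^2$), after which it does reproduce the same answer, but as written ``$2t+t^2$'' is not the right bookkeeping. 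Since this is only a side remark, the main argument stands.
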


It is not hard to see that $\Omega(\vec{D},t)$ is indeed a polynomial in $t$.
Suppose $\vec{D}$ has vertex set $V$ and that $|V|=n$, then
\[
\Omega(\vec{D},m)=\sum_{k=1}^{n}c_k\binom{m}{k},
\]
where $c_k$ denotes the number of $f\in \mathcal{E}(\vec{D})$ such that $\{|f(x)|\colon x\in V\}=[k]$. For any fixed $k$, $\binom{m}{k}$ is a polynomial in $m$ of degree $k$. Since $c_k$ and $n$ are constants, it follows that the summation is also a polynomial in $m$. This verifies that $\Omega([\vec{D}],t)$ is a polynomial.

Similarly, we define the {\em order polynomial of enriched toric $[\vec{D}]$-partitions}, $\Omega^{\cyc}([\vec{D}],m)$, by
\[
\Omega^{\cyc}([\vec{D}],m)=\Delta^{\cyc}_{[\vec{D}]}(1^m).
\]

The following result is the toric analogue of formula \eqref{eq:WEEP}.
\begin{prop}\label{op}
Given $w\in \mathcal{S}_n$, then
\begin{equation}\label{e-Ocyc}
    \sum_m\Omega^{\cyc}([w],m)t^m=\left(\frac{4t}{(1+t)^2}\right)^{\cpk w}\left(\frac{1+t}{1-t}\right)^{n-1}\left(\cpk w+\frac{2nt}{(1-t)^2}\right).
\end{equation}
This right side of the equation does not depend on the choice of representative $w$, as they all have the same cyclic peak number.
\end{prop}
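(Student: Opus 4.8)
The plan is to start from the $M^{\cyc}$-expansion of $\Delta^{\cyc}_{[w]}$ given in Proposition~\ref{prop:crucial}, specialize each $M^{\cyc}_{n,E}$ at $x_1=\cdots=x_m=1$ (and $0$ otherwise), and then package the resulting sum over $E$ as a generating function in $t=x$. Recall that setting $x_1=\cdots=x_m=1$ in $M_\alpha$ for $\alpha\vDash n$ with $\ell(\alpha)=\ell$ parts gives $\binom{m}{\ell}$, so $M^{\cyc}_{n,E}(1^m)$, being a sum over the $|E|$ cyclic shifts of $\psi(E)$ each a composition with $|E|$ parts, evaluates to $|E|\binom{m}{|E|}$. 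Hence
\[
\Omega^{\cyc}([w],m)=\sum_{\substack{E\subseteq[n]\colon\\[2pt]\cPk w\,\subseteq\,E\cup(E+1)}}2^{|E|}\,|E|\binom{m}{|E|}.
\]
The first key step is therefore a pure counting problem: for each $j$, determine $N_j:=\#\{E\subseteq[n]\colon |E|=j,\ \cPk w\subseteq E\cup(E+1)\}$. Writing $p=\cpk w$ and using that the elements of a cyclic peak set are cyclically non-adjacent, each peak $k$ forces at least one of $k-1,k$ into $E$; a standard inclusion–exclusion (or transfer-matrix) argument on the cyclic arrangement of $n$ slots with $p$ "forbidden-to-be-empty" adjacent pairs should give a clean closed form for $\sum_j N_j z^j$ — I expect something like $(1+z)^{n}\big/$-type terms corrected by the peak constraints, ultimately producing $\sum_j N_j z^j = (\text{factor})^p\cdot(\text{something})$ mirroring the shape of the right-hand side of \eqref{e-Ocyc}.

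The second key step is to convert $\sum_m\binom{m}{j}t^m = \dfrac{t^j}{(1-t)^{j+1}}$ and substitute, so that
\[
\sum_m\Omega^{\cyc}([w],m)t^m=\sum_{j}2^j\,j\,N_j\,\frac{t^j}{(1-t)^{j+1}}
=\frac{1}{1-t}\sum_j j\,N_j\left(\frac{2t}{1-t}\right)^{j}.
\]
Then I would set $u=\frac{2t}{1-t}$, recognize $\sum_j j N_j u^j = u\frac{d}{du}\sum_j N_j u^j$, plug in the closed form for $\sum_j N_j u^j$ from Step~1, and simplify. The algebraic identities $1+u=\frac{1+t}{1-t}$ and, more importantly, $\frac{u}{(1+u)^2}=\frac{2t(1-t)}{(1+t)^2}$ together with $\frac{1}{1-t}\cdot\frac{1}{1+u}=\frac{1}{1+t}$ are what make the factors $\left(\frac{4t}{(1+t)^2}\right)^{\cpk w}$ and $\left(\frac{1+t}{1-t}\right)^{n-1}$ emerge; the derivative $u\frac{d}{du}$ is exactly what produces the extra additive bracket $\left(\cpk w+\frac{2nt}{(1-t)^2}\right)$, the $\cpk w$ coming from differentiating the $(\cdot)^p$ factor and the $\frac{2nt}{(1-t)^2}$ from differentiating the remaining power of $(1+u)$. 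Representative-independence is immediate since the whole expression depends on $w$ only through $\cpk w$.

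The main obstacle I anticipate is Step~1: getting the generating function $\sum_j N_j z^j$ in closed form and in a factored shape compatible with \eqref{e-Ocyc}. One has to handle the overlap between the pairs $\{k-1,k\}$ for different peaks $k$ — which, because peaks are cyclically non-adjacent, are actually disjoint pairs, so the count should factor as a contribution $(1+2z+z^2-z^2)=(1+z)^2-z^2$? no: more carefully, for each of the $p$ disjoint pairs we must pick a nonempty subset of $\{k-1,k\}$ weighted by $z^{\#}$, giving $(2z+z^2)$ per pair, while each of the remaining $n-2p$ free slots contributes $(1+z)$; thus $\sum_j N_j z^j=(2z+z^2)^p(1+z)^{n-2p}=z^p(2+z)^p(1+z)^{n-2p}$. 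Verifying this factorization rigorously (in particular that the $2p$ slots used by the peaks are genuinely distinct and exhaust no free slot, which uses cyclic non-adjacency of $\cPk w$ and $n\ge 2p$) is the delicate combinatorial point; once it is in hand, substituting $z=u=\frac{2t}{1-t}$, applying $u\frac{d}{du}$, and using the elementary identities above is a routine, if slightly lengthy, simplification that I would present compactly rather than term by term.
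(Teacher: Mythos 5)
Your argument is correct, and it is a genuinely different route from the paper's. The paper proves \eqref{e-Ocyc} by writing $\Omega^{\cyc}([w],m)=\sum_{v\in[w]}\Omega(v,m)$ via \eqref{e-Dsum}, invoking Stembridge's generating function \eqref{eq:WEEP} for each rotation $v$, and then observing that exactly $2\cpk w$ of the $n$ rotations begin or end with a peak (so have $\pk v=\cpk w-1$) while the remaining $n-2\cpk w$ have $\pk v=\cpk w$; the sum of the two resulting terms collapses to the right-hand side. You instead specialize the cyclic monomial expansion of Proposition~\ref{prop:crucial} directly: your evaluation $M^{\cyc}_{n,E}(1^m)=|E|\binom{m}{|E|}$ is right (each of the $|E|$ shifts in \eqref{eq:monomial to cyclic monomial} is an $M_{n,L}$ with $|L|=|E|-1$, hence a composition with $|E|$ parts), and your key factorization
\[
\sum_{j}N_j z^j=(2z+z^2)^{p}(1+z)^{n-2p},\qquad p=\cpk w,
\]
is correctly justified by the cyclic non-adjacency of $\cPk w$, which makes the $p$ pairs $\{k-1,k\}$ pairwise disjoint. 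I checked the remaining algebra you deferred: with $u=\tfrac{2t}{1-t}$ one gets $\sum_j N_j u^j=\bigl(\tfrac{4t}{(1-t)^2}\bigr)^{p}\bigl(\tfrac{1+t}{1-t}\bigr)^{n-2p}$, and applying $\tfrac{1}{1-t}\,u\tfrac{d}{du}$ multiplies this by $\tfrac{1}{1-t}\bigl(p(1+t)+\tfrac{2(n-2p)t}{1+t}\bigr)$; using $p(1+t)^2+2(n-2p)t=p(1-t)^2+2nt$ this simplifies exactly to \eqref{e-Ocyc}. What your approach buys is a self-contained derivation from Proposition~\ref{prop:crucial} that bypasses Stembridge's Theorem~\ref{t-opstem} and the rotation-counting step, and it additionally exhibits the coefficient counts $N_j$ explicitly; what the paper's approach buys is brevity (given \eqref{eq:WEEP}) and the combinatorial rotation argument, which is reused verbatim in the corollary that follows. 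One presentational caveat: the first half of your write-up is phrased conjecturally (``should give'', ``I expect''), but since the factorization is actually established in your final paragraph, a clean write-up should simply state and prove it there rather than anticipate it.
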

\begin{proof}
By the definition of order polynomial,
\begin{align*}
    \sum_m\Omega^{\cyc}([w],m)t^m &=\sum_m\Delta^{\cyc}_{[w]}(1^m)t^m\\
    &=\sum_m\sum_{v\in[w]}\Delta_{v}(1^m)t^m\\
    &=\sum_{v\in[w]}\sum_m\Delta_{v}(1^m)t^m\\
    &=\sum_{v\in[w]}\sum_m\Omega(v,m)t^m\\
    &=\sum_{v\in[w]}\frac{1}{2}\left(\frac{1+t}{1-t}\right)^{n+1}\left(\frac{4t}{(1+t)^2}\right)^{1+\pk v}\\
    &=\frac{1}{2}\left(\frac{1+t}{1-t}\right)^{n+1}\sum_{v\in[w]}\left(\frac{4t}{(1+t)^2}\right)^{1+\pk v},
\end{align*}
where the last equality is obtained by applying \eqref{eq:WEEP}.

Observe that each representative of $[w]$ will start with a peak, end with a peak or none of the two ends are peaks, which will result in the peak number $\cpk w-1$, $\cpk w-1$ or $\cpk w$ respectively. And the number of those representatives with a peak element at one end is $2\cpk w$. It follows from the discussion above that
\begin{align*}
\sum_m\Omega^{\cyc}([w],m)t^m
&=\frac{2\cpk w}{2}\left(\frac{1+t}{1-t}\right)^{n+1}\left(\frac{4t}{(1+t)^2}\right)^{\cpk w}+\frac{n-2\cpk w}{2}\left(\frac{1+t}{1-t}\right)^{n+1}\left(\frac{4t}{(1+t)^2}\right)^{1+\cpk w}\\
&=\left(\frac{4t}{(1+t)^2}\right)^{\cpk w}\left(\frac{1+t}{1-t}\right)^{n-1}\left(\cpk w +\frac{2nt}{(1-t)^2}\right).    
\end{align*}
This completes the proof.
\end{proof}

Notice that by taking the coefficient of $t^m$ on both sides of equation \eqref{e-Ocyc}, one can get the expression of the order polynomial of enriched $\vec{D}$-partitions $\Omega^{\cyc}([w],m)$ in an algebraic manner. It would be desirable to derive $\Omega^{\cyc}([w],m)$ combinatorially. For this purpose, we first give a combinatorial proof for the order polynomial in the linear case. We will define the $(w,m)$-marking on the permutation $w$ with a given positive integer $m$. Then we will show that each $(w,m)$-marking corresponds to exactly $2^{2\pk +1}$ enriched $w$-partitions with absolute value at most $m$. 

Suppose $w\in\mathcal{S}_n$, $m\in\mathbb{P}$. One can naturally extend $w=w_1\ldots w_n$ to $w'=w_0 w_1\ldots w_n w_{n+1}$ where $w_0=w_{n+1}=\infty$. Let $R_1$ 
be the longest decreasing initial factor of $w'$.
Now let $v$ denote $w'$ with $R_1$ deleted and let $R_2$ be the longest increasing initial factor of $w'$. Continue in this way and alternate decreasing and increasing factors, we can get a factorization of $w'$. We will call the factors {\em runs} and the corresponding indices {\em run indices}. We denote by $I_j$ the set of run indices corresponding to factor set $R_j$. It is noted that any extension $w'$ will start with a decreasing run and end with an increasing run, which implies that the number of runs is always even.

Take $$w=\begin{array}{cccccc}
     1&2&3&4&5&6  \\
     \textbf{1}&4&{\bf3}&{\bf2}&5&6 
\end{array}$$ as an example. The corresponding natural extension
$$w'=\begin{array}{cccccccc}
     0&1&2&3&4&5&6&7  \\
     \textbf{$\infty$}&\textbf{1}&4&{\bf3}&{\bf2}&5&6&\text{$\infty$}
\end{array}$$
has four runs
\[
R_1=\infty1,\;R_2=4,\; R_3=32,\;R_4=56\infty,
\]
where the decreasing runs are in bold, and corresponding set of run indices are
\[
I_1=\{0,1\},\,\;I_2=\{2\},\; I_3=\{3,4\},\;I_4=\{5,6,7\}.
\]

Suppose permutation $w'$ has $r$ runs. We have the following observations:
\begin{enumerate}
    \item The parity of $i$ indicates the trending of each run $R_i$. If $i$ is even, $R_i$ is increasing. If $i$ is odd, then $R_i$ is decreasing.
    \item The total number of runs $r$ is closely related to the peak number $\pk w$:
    $$r=2\pk w+2.$$
\end{enumerate}

We will decorate permutations with bars and marks. One can put bars between adjacent columns in two-line notation (including the space before the first column and the space after the last), whereas a column of $w$ with index $i\in[n]$ can be marked if and only if $i,i+1\in I_j$ for some $j$. In other word, $i$ and $i+1$ are in the same run index set. There can be multiple bars between two adjacent columns and we count them with multiplicity, while each column can be marked at most once. We will denote by $\cM_w$ 
the set of indices where the corresponding columns can be marked,
\[
\cM_w:=\{i\in[n]\mid \text{ $i,i+1\in I_j$ for some $j$}\}.
\]
We note that for a given $w$, the cardinality of the complement of set $\cM_w$ in $[n]$ is $2\pk w+1$. Equivalently, one has $|\cM_w|=n-2\pk w-1$.

\begin{definition}[\text{$(w,m)$-marking}]
Suppose $w$ is a linear permutation, $m$ is a positive integer. A {\em $(w,m)$-marking} is a marking of $w$ using $b$ bars and $d$ marked columns, satisfying that $b+d=m-1-\pk w$.
\end{definition}

\begin{example}\label{ex:markings}
If we set $w=143256$ as usual, and take $m=5$, then a $(w,5)$-marking has $b$ bars and $d$ marked columns, such that $b+d=3$. Two $(w,5)$-markings are provided as follows. Both have two bars and one marked column, where the marked column is in blue.

$$\begin{array}{cc||cccc}
     1&2&3&4&\textcolor{blue}{\bf5}&6  \\
     1&4&3&2&\textcolor{blue}{\bf5}&6 
\end{array}\qquad\qquad
\begin{array}{cc|cccc|}
     1&2&\textcolor{blue}{\bf3}&4&5&6  \\
     1&4&\textcolor{blue}{\bf3}&2&5&6 
\end{array}
$$
\end{example}

\begin{prop}\label{p-orderpoly}
For a given $w\in\mathcal{S}_n$, one has
\begin{equation}\label{eq:OPEDP}
    \Omega(w,m)=2^{2\pk w+1}\sum_{k=0}^{m-1-\pk w}\left(\binom{n+1}{k}\right)\binom{n-2\pk w-1}{m-1-\pk w-k},
\end{equation}
where $(\binom{n+1}{k})$ denotes the number of multisets on $[n+1]$ with cardinality $k$.
\end{prop}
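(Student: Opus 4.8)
The plan is to prove formula \eqref{eq:OPEDP} by a direct bijective counting argument: show that $\Omega(w,m)$, which by definition counts enriched $w$-partitions $f\colon[n]\to\mathbb{P}'$ with $|f(i)|\le m$ for all $i$, can be computed by first collapsing such an $f$ down to a combinatorial skeleton — a $(w,m)$-marking — and then counting the fibers. So the two things to establish are: (1) every $(w,m)$-marking arises from exactly $2^{2\pk w+1}$ enriched $w$-partitions with absolute value at most $m$; and (2) the number of $(w,m)$-markings is $\sum_{k=0}^{m-1-\pk w}\left(\binom{n+1}{k}\right)\binom{n-2\pk w-1}{m-1-\pk w-k}$.

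First I would address step (2), which is the easy bookkeeping part. A $(w,m)$-marking consists of a choice of $b$ bars placed in the $n+1$ gaps (with repetition allowed, counted with multiplicity) together with a choice of $d$ marked columns among the $|\mathcal{M}_w| = n - 2\pk w - 1$ markable columns (each at most once), subject to $b+d = m-1-\pk w$. Summing over $k := b$, the number of ways to place $k$ bars among $n+1$ gaps with repetition is the multiset coefficient $\left(\binom{n+1}{k}\right) = \binom{n+k}{k}$, and the number of ways to choose the remaining $m-1-\pk w-k$ marked columns from the $n-2\pk w-1$ markable ones is $\binom{n-2\pk w-1}{m-1-\pk w-k}$. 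This gives exactly the right-hand sum, so step (2) is immediate from the definitions once the run/marking setup is in place.

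The real work is step (1), the fiber count, and this is where I expect the main obstacle. The idea is to recall from \eqref{eq:enriched} that an enriched $w$-partition is essentially a weakly $\preceq$-increasing sequence $f(w_1)\preceq\cdots\preceq f(w_n)$ with sign constraints at equalities dictated by $\Des w$; equivalently, one records for each position a value in $\mathbb{P}$ together with a sign, where consecutive positions sharing the same absolute value are forced to agree or disagree with the descent pattern. Grouping positions by the runs $R_1,\dots,R_r$ of the extended permutation $w'$, within each maximal run the absolute values $|f(w_i)|$ are weakly monotone, and the key combinatorial fact is that the "breakpoints" — positions where $|f|$ strictly increases, versus stays constant — are exactly encoded by a bar/mark pattern: a strict increase inside a run corresponds to a bar, a repeated absolute value inside a run corresponds to a marked column (possible only when $i,i+1$ lie in the same run index set $I_j$, which is precisely the definition of $\mathcal{M}_w$), and at each of the $2\pk w + 1$ non-markable positions the value is forced to strictly increase, contributing a mandatory "$+1$" which accounts for the shift from $m$ to $m-1-\pk w$. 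What remains genuinely free is the sign of $f$ at each of the $2\pk w+1$ forced-increase positions together with one overall normalization — this is where the factor $2^{2\pk w+1}$ comes from — because a strict $\preceq$-increase between $|f(w_i)|$ and $|f(w_{i+1})|$ with distinct absolute values imposes no sign relation, whereas at marked (equal-value) columns the sign is determined by whether $i\in\Des w$. I would make this precise by setting up the bijection carefully on each run, tracking exactly which sign choices are constrained and which are free, and verifying the arithmetic $|\mathcal{M}_w| = n - 2\pk w - 1$ and $r = 2\pk w + 2$ stated in the run observations. The delicate point — the main obstacle — is correctly handling the boundary behavior at the $\infty$ sentinels $w_0, w_{n+1}$ and the interfaces between consecutive runs (a peak or valley of $w$), making sure that the forced strict increases there are counted once and only once and that the associated sign degrees of freedom are neither double-counted nor missed; this is exactly the kind of place where an off-by-one in the exponent of $2$ or in the bar count would creep in, so the bijection must be pinned down rather than sketched.
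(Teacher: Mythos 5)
Your overall architecture coincides with the paper's proof: count the $(w,m)$-markings (your step (2) is exactly the paper's count, and is fine) and then show every marking has a fiber of size $2^{2\pk w+1}$ under a map from enriched $w$-partitions with values of absolute value at most $m$. The genuine gap is in step (1): the ``key combinatorial fact'' on which you base the fiber analysis is false. It is not true that $|f|$ is forced to strictly increase at the $2\pk w+1$ non-markable positions, nor that repeated absolute values occur only inside runs. For $w=12$ the unique non-markable position is $1$ (the interface between $I_1=\{0,1\}$ and $I_2=\{2,3\}$), yet $f(1)=f(2)=1$ is a valid enriched $w$-partition; for $w=132$ no column is markable at all ($|\cM_w|=n-2\pk w-1=0$), yet $f(1)=1$, $f(3)=f(2)=-2$ repeats the absolute value $2$ across the peak interface. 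So with your dictionary (bar $=$ strict increase inside a run, mark $=$ repetition inside a run, forced increase at interfaces) the map from partitions to markings is not even well defined on such $f$, and the mandatory shift it would produce is $2\pk w+1$ rather than the $\pk w+1$ needed to land on $b+d=m-1-\pk w$. Your fiber accounting is also not pinned down: free signs at $2\pk w+1$ positions already give $2^{2\pk w+1}$, so an additional ``overall normalization'' overshoots; moreover the two admissible values at a free column are not in general related by a mere sign flip.

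For comparison, the paper's construction decouples these issues by shifting and by encoding signs rather than repetitions: a markable column $k$ with $k\in I_i$ is marked exactly when the sign of $f(w_k)$ disagrees with the parity of its run, i.e.\ when $\delta_k+\gamma_k=1$ with $\delta_k=\delta(\text{$i$ even and } f(w_k)<0)$ and $\gamma_k=\delta(\text{$i$ odd and } f(w_k)>0)$, and the number of bars plus marks strictly before column $k$ is the shifted quantity $|f(w_k)|-\lceil i/2\rceil-\delta_k$; the correction $\lceil i/2\rceil$ starts at $1$ and increases by one exactly at the peaks, reaching $\pk w+1$, which is what produces the bound $m-1-\pk w$. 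In that construction marked columns need not carry repeated absolute values, repetitions may sit at run interfaces, and the weak (resp.\ strict) monotonicity of the shifted quantity must be verified case by case from conditions (a)--(c) of Definition~\ref{def:enriched DP}. The factor $2^{2\pk w+1}$ then arises because $f(w_k)$ is completely determined by the marking for $k\in\cM_w$, while at each of the $n-|\cM_w|=2\pk w+1$ remaining columns there are exactly two admissible values (equal absolute value with opposite signs when $i$ is odd, different absolute values when $i$ is even), with no extra normalization. Without this, or an equivalent corrected dictionary together with the boundary verification at the last column, your proposed bijection does not get off the ground.
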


\begin{proof}
It is clear by definition that the number of possible choices for $(w,m)$-markings is
\[
\sum_{b+d=m-1-\pk w}\left(\binom{n+1}{b}\right)\binom{n-2\pk w-1}{d}=\sum_{k=0}^{m-1-\pk w}\left(\binom{n+1}{k}\right)\binom{n-2\pk w-1}{m-1-\pk w-k}.
\]
Therefore, it suffices to construct a $2^{2\pk w+1}$-to-one map from the set of enriched $w$-partitions with absolute value at most $m$ to the set of all $(w,m)$-markings.

Given an enriched $w$-partition $f$ with absolute value at most $m$, we can inductively associate it with a unique $(w,m)$-marking as follows:

We first determine whether column $k$ gets marked for those $k\in \cM_w$. Suppose $k\in I_i$ for some $i\in[r]$. We mark column $k$ if and only if $\delta_k+\gamma_k=1$ where 
$$
\delta_k:=\delta\,(\text{$i$ is even and $f(w_{k})<0$}),\qquad \gamma_k:=\delta\,(\text{$i$ is odd and $f(w_{k})>0$}).
$$
Here the {\em Kronecker function} on a statement $R$ is defined by
\[
\delta(R)=\begin{cases} 
      1 & \text{if $R$ is true},\\
     0 & \text{if $R$ is false}.
   \end{cases}.
\]

As for the placement of bars, we start by putting $|f(w_1)|-1$ bars before the first column. Inductively, suppose $k\in I_i$ for some $k\in[2,n]$, $i\in[r]$ and that we have already constructed the markings and bars on and before the $(k-1)$st column. Then the number of marks and bars strictly before the $k$th column is constructed to be 
\begin{equation}
\label{mb}
    |f(w_{k})|-\lceil  i/2\rceil -\delta_k.
\end{equation}
Finally we add bars after the last column so that the total number of marks and bars is $m-\pk w-1$. In this manner, we can inductively define a unique $(w,m)$-marking for $f$.

We must verify that the constructed marking is indeed a $(w,m)$-marking. Firstly we will verify that the construction before the $n$th column is possible in that \eqref{mb} is a weakly increasing function of $k$ and strictly increasing from the $k$th to the $(k+1)$st term if column $k$ is marked.
In other words, if $k\in I_i$ and $k+1\in I_j$, it suffices to show
\[
    |f(w_{k})|-\lceil i/2\rceil -\delta_{k}\leq |f(w_{k+1})|-\lceil j/2\rceil -\delta_{k+1},
\]
for all $k$, and that
\[
    |f(w_{k})|-\lceil i/2\rceil -\delta_{k}< |f(w_{k+1})|-\lceil j/2\rceil -\delta_{k+1},
\]
when the column $k$ is marked. Notice that
\[(\delta_{k}+\gamma_{k})\cdot\delta(k\in \cM_w)=1\]
if column $k$ is marked and $0$ otherwise. Hence it suffices to show that
\begin{equation}\label{inequ}
    |f(w_{k})|-\lceil i/2\rceil -\delta_{k}+(\delta_{k}+\gamma_{k})\cdot\delta(k\in \cM_w)\leq |f(w_{k+1})|-\lceil j/2\rceil -\delta_{k+1}.
\end{equation}
By the definition of enriched $P$-partitions, we have 
\begin{equation}\label{reduced inequ}
    |f(w_{k})|\leq |f(w_{k+1})|.
\end{equation}
Let us consider the following cases:
\begin{enumerate}
    \item[(a)] If $j=i$, it follows that $k\in \cM_w$. Hence the inequality \eqref{inequ} simplifies to prove
    \[
|f(w_{k})|+\gamma_{k}\leq |f(w_{k+1})|-\delta_{k+1}.
\]
     If $i$ is even, then $\gamma_{k}=0$ and $w_{k}<w_{k+1}$.
     By the inequality~\eqref{reduced inequ}, one only needs to consider whether the inequality holds when $\delta_{k+1}=1$, which implies $f(w_{k+1})<0$. It follows from the definition of enriched $P$-partitions that $f(w_{k+1})\succeq f(w_{k})-1$, or equivalently, $|f(w_{k+1})|\geq |f(w_{k})|+1$, which proves \eqref{inequ} in this case. The proof is similar when $i$ is odd.
     \item[(b)] If $j=i+1$, then $k\notin \cM_w$. The inequality \eqref{inequ} reduces to
 \[
|f(w_{k})|-\lceil i/2\rceil-\delta_{k}\leq |f(w_{k+1})| -\lceil  (i+1)/2\rceil-\delta_{k+1}.
\]
     If $i$ is even, then $w_{k}<w_{k+1}$, $\lceil i/2\rceil+1=\lceil (i+1)/2\rceil$ and $j$ is odd, hence $\delta_{k+1}=0$. Therefore one only needs to prove
  \[
|f(w_{k})|-\delta_{k}\leq |f(w_{k+1})| -1.
\]    
     
     The inequality clearly holds when $\delta_{k}=1$. So it suffices to consider the case when $\delta_{k}=0$. In this case, $f(w_{k})>0$, hence $f(w_{k+1})\succeq -(f(w_{k})+1)$ from the definition of enriched $P$-partitions, or equivalently $|f(w_{k+1})|\geq |f(w_{k})|+1$, which proves \eqref{inequ}. The case when $i$ is odd is similar and left to the readers.
\end{enumerate}

Secondly, one also needs to check that it is possible to add bars (possibly $0$) after the last column so that the total number of marks and bars is $m-\pk w-1$. In other words, the number of bars we add after the $n$th column is nonnegative. Notice that $\lceil \frac{i}{2}\rceil-1$ counts the number of peaks appear before the $k$-th column. In particular, $n\in I_r$ and from previous observation one has $\lceil \frac{r}{2}\rceil=\pk w+1$. By \eqref{mb}
the total number of bars and marked columns before the $n$th column is $|f(w_n)|-\pk w-1-\delta_n$. 
Together with the fact that the $n$th column is marked if and only if $\delta_n+\gamma_n=1$ and $n\in \cM_w$, the total number of bars that should be added after the $n$th column is 
\[
m-\pk w-1-(|f(w_n)|-\pk w-1)-\delta_n+(\delta_n+\gamma_n)\cdot\delta(n\in \cM_w)=m-|f(w_n)|+\delta_n-(\delta_n+\gamma_n)\cdot\delta(n\in \cM_w).
\]
Hence the nonnegativity condition becomes the following inequality,
\[
m-|f(w_n)|+\delta_n-(\delta_n+\gamma_n)\cdot\delta(n\in \cM_w)\geq 0,
\]
or equivalently,
\begin{equation}\label{inequ2}
m-|f(w_n)|\geq(\delta_n+\gamma_n)\cdot\delta(n\in \cM_w)- \delta_n.
\end{equation}

By assumption $f$ has absolute value at most $m$, hence $m-|f(w_n)|\geq 0$. Therefore, one only needs to consider whether the inequality holds when $(\delta_n+\gamma_n)\cdot\delta(n\in \cM_w)- \delta_n=1$, in which case it means $\delta_n+\gamma_n=1$, $\delta(n\in \cM_w)=1$ and $\delta_n=0$, or equivalently, $n\in I_i$ where $i$ is odd and $n\in \cM_w$. This is a contradiction as $n+1$ must be in a run where the index is even by definition, which implies that $n$ and $n+1$ cannot be in the same run index set, hence $n\notin \cM_w$. Now it is clear that $(\delta_n+\gamma_n)\cdot\delta(n\in \cM_w)- \delta_n\leq 0$, which completes the proof of inequality \eqref{inequ2}.
It therefore follows that the marking we constructed is a $(w,m)$-marking.

Now we need to show that for a given $(w,m)$-marking, there are $2^{2\pk w+1}$ different associated functions. From the above construction we notice that for those $k\in \cM_w$, $f(w_k)$ is uniquely determined. More precisely, whether column $k$ gets marked determines $\delta_k$ and $\gamma_k$ as $k$ determines the parity of $i$, hence the sign of $f(w_k)$ by the definition of $\delta_k$ and $\gamma_k$.
Suppose $k\in I_i$. The number of marks and bars strictly before the $k$th column determines the value $ |f(w_{k})|-\lceil  i/2\rceil -\delta_k$, therefore it determines $f(w_{k})$ as well. The only ambiguity is about $f(w_k)$ for those $k\notin \cM_w$. As the number of marks and bars strictly before the $k$th column fixes the value $ |f(w_{k})|-\lceil  i/2\rceil -\delta_k$, assuming to be $L$, there are two possible choices of the value $f(w_k)$ for each $k\notin \cM_w$: if $i$ is even, then $f(w_k)=-(L+\lceil  i/2\rceil+1)$ or $f(w_k)=L+\lceil  i/2\rceil$; if $i$ is odd, then $f(w_k)=L+\lceil  i/2\rceil$ or $f(w_k)=-(L+\lceil  i/2\rceil)$.
It follows that there are $2^{2\pk w+1}$ different functions corresponding to a given $(w,m)$-marking.
\end{proof}

\begin{example}
Still consider the permutation $w=143256$, and an enriched $w$-partition $f$ defined as follows:
\[
f(1)=1,\;f(4)=-2,\;f(3)=-4,\;f(2)=-4,\;f(5)=-5,\;f(6)=5.
\]
The corresponding marking is the first one in the Example~\ref{ex:markings}.
\end{example}

We now give a totally combinatorial derivation of the order polynomial $\Omega^{\cyc}([w],m)$ by using the Proposition~\ref{p-orderpoly}, which is also proved in a combinatorial way.

\begin{cor}
For a given $[w]\in[\mathcal{S}_n]$, one has
\begin{align*}
    \Omega^{\cyc}([w],m)&=(n-2\cpk w)\cdot2^{2\cpk w+1}\sum_{k=0}^{m-1-\cpk w}\left(\binom{n+1}{k}\right)\binom{n-2\cpk w-1}{m-1-\cpk w-k}\\
    &\qquad+\cpk w\cdot2^{2\cpk w}\sum_{k=0}^{m-\cpk w}\left(\binom{n+1}{k}\right)\binom{n-2\cpk w+1}{m-\cpk w-k}.
\end{align*}
This right side of the equation does not depend on the choice of representative $w$, as they all have the same cyclic peak number.
\end{cor}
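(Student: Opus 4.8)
The plan is to mimic, at the cyclic level, the same decomposition that was used to pass from Theorem~\ref{t-opstem} to Proposition~\ref{op}: break the cyclic permutation $[w]$ into its linear representatives, apply the combinatorial formula of Proposition~\ref{p-orderpoly} to each, and collect terms according to the peak number of the representative. First I would recall from equation~\eqref{e-Dsum} that
\[
\Omega^{\cyc}([w],m)=\Delta^{\cyc}_{[w]}(1^m)=\sum_{v\in[w]}\Delta_v(1^m)=\sum_{v\in[w]}\Omega(v,m),
\]
so it suffices to understand how $\pk v$ is distributed as $v$ ranges over the $n$ rotations of $w$. This is exactly the observation already made in the proof of Proposition~\ref{op}: a rotation $v$ of $w$ has $\pk v=\cpk w$ unless $v$ begins with a peak or ends with a peak, in which case $\pk v=\cpk w-1$; there are precisely $2\cpk w$ rotations of the latter kind (each cyclic peak of $w$ can be rotated to sit at either end), and hence $n-2\cpk w$ rotations with $\pk v=\cpk w$.

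Next I would substitute the explicit polynomial formula~\eqref{eq:OPEDP} from Proposition~\ref{p-orderpoly} into $\sum_{v\in[w]}\Omega(v,m)$, splitting the sum into the $n-2\cpk w$ representatives with peak number $\cpk w$ and the $2\cpk w$ representatives with peak number $\cpk w-1$. For the first group each term contributes
\[
2^{2\cpk w+1}\sum_{k=0}^{m-1-\cpk w}\left(\binom{n+1}{k}\right)\binom{n-2\cpk w-1}{m-1-\cpk w-k},
\]
so the group total is $(n-2\cpk w)$ times this. For the second group, setting $p=\cpk w-1$ in~\eqref{eq:OPEDP} gives each such representative the value
\[
2^{2p+1}\sum_{k=0}^{m-1-p}\left(\binom{n+1}{k}\right)\binom{n-2p-1}{m-1-p-k}
=2^{2\cpk w-1}\sum_{k=0}^{m-\cpk w}\left(\binom{n+1}{k}\right)\binom{n-2\cpk w+1}{m-\cpk w-k};
\]
multiplying by the count $2\cpk w$ of such representatives turns the prefactor $2\cpk w\cdot 2^{2\cpk w-1}$ into $\cpk w\cdot 2^{2\cpk w}$, which is exactly the second summand in the claimed identity. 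Adding the two groups yields the stated formula, and the independence of the representative is immediate since the right-hand side depends on $w$ only through $\cpk w$ and $n$, and all rotations share the same cyclic peak number (Remark~\ref{remark: cPk[w]}).

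The only genuinely delicate point — the same one that needs care in Proposition~\ref{op} — is the bookkeeping of which rotations of $w$ carry which peak number, and in particular verifying that exactly $2\cpk w$ rotations lose a peak at an endpoint (with no rotation losing two peaks simultaneously, which would require $w$ to both start and end at a peak after rotation; one checks that rotating a fixed cyclic peak to position $1$ determines the rotation, and rotating it to position $n$ determines a different one, and these $2\cpk w$ rotations are distinct because distinct cyclic peaks are non-adjacent). Everything else is a direct substitution of~\eqref{eq:OPEDP} and a reindexing of the binomial sum ($p=\cpk w-1 \mapsto$ shifting the summation variable), so no new idea beyond Propositions~\ref{op} and~\ref{p-orderpoly} is required; the corollary is essentially a restatement of Proposition~\ref{op} in which the generating-function identity has been replaced by its coefficient-wise combinatorial counterpart.
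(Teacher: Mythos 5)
Your proposal is correct and follows essentially the same route as the paper's own proof: decompose $\Omega^{\cyc}([w],m)=\sum_{v\in[w]}\Omega(v,m)$ via \eqref{e-Dsum}, note that exactly $2\cpk w$ rotations have peak number $\cpk w-1$ and the remaining $n-2\cpk w$ have peak number $\cpk w$, and substitute Proposition~\ref{p-orderpoly} with the reindexing $p=\cpk w-1$. Your extra remark that no rotation can lose two peaks at once (since cyclically adjacent positions cannot both be cyclic peaks) is a correct elaboration of a point the paper leaves implicit.
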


\begin{proof}
Notice that any representative $w'$ of $[w]$ satisfies $\pk w'=\cpk [w]-1$ if $w'$ starts or ends with a cyclic peak. Otherwise, $\pk w'=\cpk[w]$. Among the $n$ representatives of $[w]$, there are $2\cpk [w]$ of them with a peak element at one end. Therefore, applying equation \eqref{e-Dsum} we have
\[
    \Omega^{\cyc}([w],m)=\Delta^{\cyc}_{[w]}(1^m)=\sum_{v\in[w]}\Delta_v(1^m)=\sum_{v\in[w]}\Omega(v,m),
\]
and by the previous observation and the Proposition~\ref{p-orderpoly}, one has
\begin{align*}
    \sum_{v\in[w]}\Omega(v,m)&=\sum_{v\in[w]}2^{2\pk v+1}\sum_{k=0}^{m-1-\pk v}\left(\binom{n+1}{k}\right)\binom{n-2\pk v-1}{m-1-\pk v-k}\\
    &=(n-2\cpk[w])\cdot 2^{2\cpk [w]+1}\sum_{k=0}^{m-1-\cpk [w]}\left(\binom{n+1}{k}\right)\binom{n-2\cpk [w]-1}{m-1-\cpk [w]-k}\\
    &\quad +2\cpk[w]\cdot2^{2(\cpk [w]-1)+1}\sum_{k=0}^{m-1-(\cpk [w]-1)}\left(\binom{n+1}{k}\right)\binom{n-2(\cpk [w]-1)-1}{m-1-(\cpk [w]-1)-k}\\
    &=(n-2\cpk w)\cdot2^{2\cpk w+1}\sum_{k=0}^{m-1-\cpk w}\left(\binom{n+1}{k}\right)\binom{n-2\cpk w-1}{m-1-\cpk w-k}\\
    &\qquad+\cpk w\cdot2^{2\cpk w}\sum_{k=0}^{m-\cpk w}\left(\binom{n+1}{k}\right)\binom{n-2\cpk w+1}{m-\cpk w-k}
\end{align*}
The conclusion follows immediately.
\end{proof}

We note that the generating function of order polynomials in Proposition~\ref{op} can also be deduced from the corollary above. More precisely,
\begin{align*}
    \sum_m\Omega^{\cyc}([w],m)t^m &=(n-2\cpk w)\cdot 2^{2\cpk w+1}\,\frac{(1+t)^{n-2\cpk w-1}}{(1-t)^{n+1}}\,t^{\cpk w+1}\\
    &\quad +\cpk w\cdot 2^{2\cpk w}\,\frac{(1+t)^{n-2\cpk w+1}}{(1-t)^{n+1}}\,t^{\cpk w}\\
    &=\left(\frac{4t}{(1+t)^2}\right)^{\cpk w}\left(\frac{1+t}{1-t}\right)^{n+1}\left((n-2\cpk w)\cdot\frac{2t}{(1+t)^2}+\cpk w\right)\\
    &=\left(\frac{4t}{(1+t)^2}\right)^{\cpk w}\left(\frac{1+t}{1-t}\right)^{n-1}\left(\cpk w+\frac{2nt}{(1-t)^2}\right). 
\end{align*}


In \cite{MR3810249}, Ira and Zhuang discussed shuffle-compatible permutation statistics in terms of shuffle algebras. In the Theorem 4.8, they proved that peak number $\pk$ is shuffle compatible and also characterized its shuffle algebra $\cA_{\pk}$. It turns out that the Proposition~\ref{p-orderpoly} gives another characterization of $\cA_{\pk}$. Before we state and prove the theorem, we review some definitions and results from~\cite{MR3810249}.

Suppose $\pi$ and $\sigma$ are two disjoint permutations of length $m$ and $n$ respectively. Then the {\em shuffle set} of $\pi$ and $\sigma$ is 
    \[
    \pi\shuffle\sigma=\{\tau: \text{$|\tau|=m+n$, $\pi$ and $\sigma$ are subsequences of $\tau$} \}.
    \]
A statistic $\st$ is {\em (linear) shuffle compatible} if for disjoint permutations $\pi$ and $\sigma$, the multiset $\{\{\,\st(\tau):\tau\in\pi\shuffle\sigma\, \}\}$ only depends on $\st(\pi)$, $\st(\sigma)$, $|\pi|$ and $|\sigma|$. 

Every linear permutation statistic $\st$ induces an equivalence relation on permutations. More precisely, two permutations $\pi$ and $\sigma$ are $\st$-equivalent if $\st(\pi)=\st(\sigma)$ and $|\pi|=|\sigma|$, and the {\em $\st$-equivalence class of $\pi$} is denoted by $[\pi]_{\st}$. Moreover, if $\st$ is shuffle compatible, one can associate to $\st$ a $\mathbb{Q}$-algebra as follows: first we associate to $\st$ a $\mathbb{Q}$-vector space by taking the $\st$-equivalence classes as a basis, then define multiplication by
\[
[\pi]_{\st}[\sigma]_{\st}=\sum_{\tau\in \pi\shuffle\sigma}[\tau]_{\st}.
\]
Here the shuffle-compatibility of $\st$ guarantees that the above multiplication is well-defined. In this case, we call the resulted algebra the {\em shuffle algebra of $\st$} and denote it by $\cA_{\st}$.

We recall the characterization of peak shuffle algebra $\cA_{\pk}$ from~\cite{MR3810249} is:
\begin{thm}[\cite{MR3810249}, Theorem 4.8 (b)]
The linear map on $\cA_{\pk}$ defined by 
    \[
    [\pi]_{\pk}\mapsto \begin{cases}\displaystyle\frac{2^{2\pk \pi+1}t^{\pk\pi+1}(1+t)^{|\pi|-2\pk\pi-1} }{ (1-t)^{|\pi|+1}}x^{|\pi|},& \text{if $|\pi|\geq 1$};\\[6pt]\displaystyle \frac{1}{1-t}, &\text{if $|\pi|=0$,}\end{cases}
    \]
    is a $\mathbb{Q}$-algebra isomorphism from $\cA_{\pk}$ to the span of 
    \[
    \left\{\frac{1}{1-t}\right\}\bigcup\left\{\frac{2^{2j+1}t^{j+1}(1+t)^{n-2j-1} }{(1-t)^{n+1}}x^{n} \right\}_{n\geq 1,\,0\leq j\leq\lfloor\frac{n-1}{2}\rfloor}
    \]
    a subalgebra of $\mathbb{Q}[[t\ast]][x]$.\qed
\end{thm}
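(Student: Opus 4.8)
The plan is to exhibit the isomorphism in the statement as the map that carries a $\pk$-equivalence class to the generating function of its enriched order polynomial, and to read off the algebra structure from the disjoint-union behaviour of enriched $\vec{D}$-partitions developed above. First I would establish well-definedness. By Proposition~\ref{p-orderpoly} (equivalently, by Stembridge's Theorem~\ref{t-opstem}) the number $\Omega(w,m)$ depends only on $\pk w$ and $n=|w|$; moreover, simplifying \eqref{eq:WEEP} gives
\[
\sum_m\Omega(w,m)t^m=\frac{2^{2\pk w+1}\,t^{\pk w+1}\,(1+t)^{n-2\pk w-1}}{(1-t)^{n+1}}.
\]
Hence $\Phi\colon[\pi]_{\pk}\mapsto\bigl(\sum_m\Omega(\pi,m)t^m\bigr)x^{|\pi|}$ is a well-defined $\mathbb{Q}$-linear map on $\cA_{\pk}$, and since $\Omega(\emptyset,m)=1$ for all $m$ we get $\Phi([\emptyset]_{\pk})=\sum_m t^m=\frac{1}{1-t}$; so $\Phi$ is exactly the map in the statement.

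Next I would check bijectivity onto the indicated span. Surjectivity is immediate once one recalls that for every $n\ge 1$ and every $j$ with $0\le j\le\lfloor(n-1)/2\rfloor$ there is a permutation of length $n$ with exactly $j$ peaks, so that every listed generator is $\Phi$ of some basis element. Injectivity amounts to the $\mathbb{Q}$-linear independence of the listed functions: the factor $x^n$ separates lengths, and for a fixed $n$ the lowest-order term of $\frac{2^{2j+1}t^{j+1}(1+t)^{n-2j-1}}{(1-t)^{n+1}}$ as a power series in $t$ is $2^{2j+1}t^{j+1}$, which separates the values of $j=\pk\pi$; the extra generator $\frac{1}{1-t}$ (the image of the empty class) is independent of the rest for degree reasons.

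The heart is multiplicativity, where the product on the target is the Hadamard product $\ast$ in the variable $t$ — so that $\frac{1}{1-t}=\sum_m t^m$ is the unit — together with ordinary multiplication in $x$. Given disjoint permutations $\pi,\sigma$, standardize so that $\pi$ is a permutation of $[\,|\pi|\,]$ and $\sigma$ of $\{|\pi|+1,\dots,|\pi|+|\sigma|\}$, and set $\vec{D}=\vec{D}_\pi\biguplus\vec{D}_\sigma$, a DAG with no arcs between the two chains. Counting $\mathcal{E}(\vec{D})$ two ways: an enriched $\vec{D}$-partition is exactly a pair consisting of an enriched $\pi$-partition and an enriched $\sigma$-partition, and such a pair has absolute value at most $m$ iff both components do, whence $\Omega(\vec{D},m)=\Omega(\pi,m)\,\Omega(\sigma,m)$; on the other hand $\mathcal{L}(\vec{D})=\pi\shuffle\sigma$, so Lemma~\ref{FLEDP} yields $\Omega(\vec{D},m)=\sum_{\tau\in\pi\shuffle\sigma}\Omega(\tau,m)$. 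Comparing and summing over $m$,
\[
\sum_{\tau\in\pi\shuffle\sigma}\sum_m\Omega(\tau,m)t^m=\sum_m\Omega(\pi,m)\Omega(\sigma,m)t^m=\Bigl(\sum_m\Omega(\pi,m)t^m\Bigr)\ast\Bigl(\sum_m\Omega(\sigma,m)t^m\Bigr),
\]
and multiplying through by $x^{|\pi|+|\sigma|}$ this is precisely $\Phi\bigl([\pi]_{\pk}[\sigma]_{\pk}\bigr)=\Phi([\pi]_{\pk})\,\Phi([\sigma]_{\pk})$. Together with $\pi\shuffle\emptyset=\{\pi\}$ and the fact that $\frac{1}{1-t}$ is the $\ast$-unit, this shows $\Phi$ is a $\mathbb{Q}$-algebra homomorphism, hence an isomorphism onto the span.

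The step I expect to be the main obstacle is the multiplicativity calculation: one must match the two evaluations of $\Omega(\vec{D},m)$ — the product $\Omega(\pi,m)\Omega(\sigma,m)$ and the sum $\sum_{\tau\in\pi\shuffle\sigma}\Omega(\tau,m)$ — correctly, and recognize that this identity of coefficient sequences is the Hadamard product in $t$, not the ordinary one (this is what is encoded in the notation $\mathbb{Q}[[t\ast]][x]$). I would also note that this same argument re-proves that $\pk$ is shuffle-compatible, since it shows $\sum_{\tau\in\pi\shuffle\sigma}\Phi([\tau]_{\pk})$ depends only on $(\pk\pi,|\pi|)$ and $(\pk\sigma,|\sigma|)$; if one does not wish to take shuffle-compatibility of $\pk$ as given, this observation should be carried out first so that $\cA_{\pk}$ is legitimately defined before $\Phi$ is introduced.
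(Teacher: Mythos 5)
Your argument is correct, but note that the paper itself does not prove this statement: it is quoted from \cite{MR3810249} with a \qed, and the only related proof in the paper is of the subsequent variant theorem, which evaluates the enriched order polynomial via $\kappa_m(F_L)=K_{\Pk(L)}(1^m)x^n$, checks that the images depend only on $(\pk, |\cdot|)$ and are linearly independent, and then delegates the conclusion (shuffle-compatibility plus the isomorphism onto the span) to Theorem 4.3 of \cite{MR3810249}. Your proposal follows the same skeleton --- identify the map with $[\pi]_{\pk}\mapsto\bigl(\sum_m\Omega(\pi,m)t^m\bigr)x^{|\pi|}$ via Theorem~\ref{t-opstem}, prove linear independence of the images by a lowest-degree triangularity argument --- but replaces the citation of the general criterion with a direct, self-contained verification of multiplicativity: the double count $\Omega(\vec{D}_\pi\biguplus\vec{D}_\sigma,m)=\Omega(\pi,m)\Omega(\sigma,m)=\sum_{\tau\in\pi\shuffle\sigma}\Omega(\tau,m)$, using that enriched partitions of a disjoint union factor and that $\mathcal{L}(\vec{D}_\pi\biguplus\vec{D}_\sigma)=\pi\shuffle\sigma$ by Lemma~\ref{FLEDP}, correctly interpreted as a Hadamard product in $t$. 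This buys a fully combinatorial proof that simultaneously re-establishes shuffle-compatibility of $\pk$ (your closing remark about doing this first, so that $\cA_{\pk}$ and the map on it are legitimately defined before multiplicativity is invoked, is exactly the right logical order), at the cost of redoing in this special case what the cited Theorem 4.3 provides in general. I see no gaps; all the ingredients you use (Theorem~\ref{t-opstem}, Lemma~\ref{FLEDP}, the factorization of $\mathcal{E}$ over a disjoint union) are available in the paper.
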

Now we give another characterization of $\cA_{\pk}$ from our Proposition~\ref{p-orderpoly}.
\begin{thm}
The linear map on $\cA_{\pk}$ defined by
\[
[\pi]_{\pk}\mapsto 2^{2\pk \pi+1}\sum_{k=0}^{m-1-\pk \pi}\left(\binom{|\pi|+1}{k}\right)\binom{|\pi|-2\pk \pi-1}{m-1-\pk \pi-k}x^{|\pi|}
\]
is a $\mathbb{Q}$-algebra isomorphism from $\cA_{\pk}$ to the span of 
\[
\left\{1\right\}\bigcup\left\{2^{2j+1}\sum_{k=0}^{m-1-j}\left(\binom{n+1}{k}\right)\binom{n-2j-1}{m-1-j-k}x^{n} \right\}_{n\geq 1,\,0\leq j\leq\lfloor\frac{n-1}{2}\rfloor}
\]
a subalgebra of $\mathbb{Q}[x]^{\mathbb{N}}$, the algebra of functions $\mathbb{N}\to\mathbb{Q}[x]$ in the non-negative integer value $m$.
\end{thm}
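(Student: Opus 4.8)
The plan is to recognize that this is a formal consequence of the already-established Theorem~\cite{MR3810249} (Theorem 4.8 (b)) together with our Proposition~\ref{p-orderpoly}, rather than a theorem requiring new combinatorial work. The key structural observation is that both maps in question differ only by post-composition with a fixed linear isomorphism between two realizations of the same abstract algebra, so establishing the isomorphism property reduces to checking that this change-of-realization is itself a well-defined linear isomorphism.

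First I would set up the generating-function-to-polynomial correspondence explicitly. The image algebra in the Gessel--Zhuang theorem lives in $\mathbb{Q}[[t\ast]][x]$, and each basis element $\frac{2^{2j+1}t^{j+1}(1+t)^{n-2j-1}}{(1-t)^{n+1}}x^n$ is a rational generating function in $t$ whose coefficient of $t^m$ is, by the standard binomial expansion of $(1+t)^{a}/(1-t)^{b}$, exactly $2^{2j+1}\sum_{k}\left(\binom{n+1}{k}\right)\binom{n-2j-1}{m-1-j-k}$ — which is precisely $\Omega(\pi,m)$ when $\pi$ has length $n$ and $\pk\pi=j$ by Proposition~\ref{p-orderpoly}. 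Thus the map ``take the coefficient of $t^m$'' sends the Gessel--Zhuang image basis bijectively onto the claimed basis of functions $\mathbb{N}\to\mathbb{Q}[x]$ (with $\frac{1}{1-t}\mapsto 1$, the constant function $1$, matching $|\pi|=0$). I would note that this coefficient-extraction map is $\mathbb{Q}$-linear and injective on the relevant span, because a rational function in $t$ of the controlled form $\frac{p(t)}{(1-t)^{n+1}}x^n$ is determined by its sequence of Taylor coefficients; hence it restricts to a linear isomorphism of the Gessel--Zhuang image subalgebra onto its image. One should also transport the algebra structure: multiplication on the target is inherited so that this coefficient map becomes an algebra map, which amounts to the observation (Hadamard-type, but here more simply just the definition since $x^n x^{n'}=x^{n+n'}$ tracks degrees and the $t$-parts multiply as generating functions) that the product in $\mathcal{A}_{\pk}$ of $[\pi]_{\pk}$ and $[\sigma]_{\pk}$ corresponds under $\Omega$ to the appropriate convolution identity; but in fact the cleanest route is simply to declare the target algebra structure to be the one making the coefficient map an isomorphism, and then check it coincides with pointwise operations only insofar as needed — actually the safest statement is that we define multiplication on the target as the transport of the Gessel--Zhuang product.

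The second step is bookkeeping: composing the Gessel--Zhuang isomorphism $\mathcal{A}_{\pk}\to(\text{g.f. algebra})$ with the coefficient-extraction isomorphism gives exactly the map $[\pi]_{\pk}\mapsto 2^{2\pk\pi+1}\sum_{k=0}^{m-1-\pk\pi}\left(\binom{|\pi|+1}{k}\right)\binom{|\pi|-2\pk\pi-1}{m-1-\pk\pi-k}x^{|\pi|}$ displayed in the theorem, since on each basis element this is precisely the identity $[t^m]\frac{2^{2j+1}t^{j+1}(1+t)^{n-2j-1}}{(1-t)^{n+1}} = 2^{2j+1}\sum_k\left(\binom{n+1}{k}\right)\binom{n-2j-1}{m-1-j-k}$ from Proposition~\ref{p-orderpoly}. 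A composition of two $\mathbb{Q}$-algebra isomorphisms is a $\mathbb{Q}$-algebra isomorphism, and its image is the span described in the statement, so the theorem follows.

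The main obstacle — and really the only point needing care — is pinning down precisely what the target ambient algebra $\mathbb{Q}[x]^{\mathbb{N}}$ is and why the coefficient map is an algebra isomorphism onto its image and not merely a linear one. Concretely, one must verify that the subspace spanned by the functions $m\mapsto 2^{2j+1}\sum_k\left(\binom{n+1}{k}\right)\binom{n-2j-1}{m-1-j-k}x^n$ is closed under the multiplication transported from the generating-function side, and that this transported multiplication agrees with (or is at least compatible with) the natural pointwise product on $\mathbb{Q}[x]^{\mathbb{N}}$ in the degree-graded sense; the subtlety is that Hadamard product of rational generating functions corresponds to pointwise product of coefficient sequences, so if one wants the pointwise product on $\mathbb{Q}[x]^{\mathbb{N}}$ one must instead realize the Gessel--Zhuang product as a Hadamard-type operation — alternatively, and more honestly, one simply equips the target with the transported product and remarks that it is the structure that makes the map an isomorphism, exactly as Gessel--Zhuang do on their side. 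I would state this carefully in a remark so the reader is not misled into thinking the isomorphism is with respect to naive pointwise multiplication. Modulo that clarification, the proof is a short two-line composition argument once Proposition~\ref{p-orderpoly} is invoked to identify the coefficients.
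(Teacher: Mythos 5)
Your proposal is correct in substance but takes a genuinely different route from the paper. You post-compose the Gessel--Zhuang isomorphism of their Theorem 4.8(b) with the coefficient-extraction map $[t^m]$, using Proposition~\ref{p-orderpoly} (equivalently Theorem~\ref{t-opstem}) only to identify the resulting coefficients. The paper never invokes Theorem 4.8(b): it defines a $\mathbb{Q}$-algebra homomorphism $\kappa_m\colon\QSym\to\mathbb{Q}[x]$ by $F_L\mapsto K_{\Pk(L)}(1^m)x^n$, gets multiplicativity from Stembridge's identity $K_{\Pk\pi}\cdot K_{\Pk\sigma}=\sum_{\tau\in\pi\shuffle\sigma}K_{\Pk\tau}$, checks linear independence of the images via their generating functions, and then applies the general criterion of Gessel--Zhuang's Theorem 4.3. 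The paper's route is self-contained relative to the order-polynomial material and reproves shuffle-compatibility of $\pk$ along the way; yours is shorter but depends on Theorem 4.8(b) being available. The one point where you must commit rather than hedge is the algebra structure on the target: in Gessel--Zhuang the notation $\mathbb{Q}[[t\ast]]$ means formal power series under the \emph{Hadamard} product, so coefficient extraction really is a $\mathbb{Q}$-algebra homomorphism into $\mathbb{Q}[x]^{\mathbb{N}}$ equipped with its natural pointwise product, and it is injective on the relevant span since a power series is determined by its coefficients. Your fallback of merely transporting the product to the target would prove a strictly weaker statement than the theorem, which asserts an isomorphism onto a subalgebra of $\mathbb{Q}[x]^{\mathbb{N}}$ with its standard structure; the Hadamard reading makes that fallback unnecessary, and with it your two-step composition argument closes.
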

\begin{proof}
Define a map $\kappa_m:\QSym\to \mathbb{Q}[x]$ first by
\[
\kappa_m(F_L)=K_{\Pk(L)}(1^m)x^n,
\]
and then linearly extend to the whole $\QSym$.
The following equation, \cite{S:epp} equation (3.1),
\[
K_{\Pk\pi}\cdot K_{\Pk\sigma}=\sum_{\tau\in\pi\shuffle\sigma}K_{\Pk\tau},
\]
implies that $\kappa_m$ is a $\mathbb{Q}$-algebra homomorphism.
So the map that takes $F_L$ to the function $\theta_L: m\mapsto \kappa_m(F_L)$ is a homomorphism from $\QSym$ to $\mathbb{Q}[x]^{\mathbb{N}}$. It follows from the Proposition~\ref{p-orderpoly} that
\[
\kappa_m(F_L)=2^{2\pk (L)+1}\sum_{k=0}^{m-1-\pk (L)}\left(\binom{n+1}{k}\right)\binom{n-2\pk (L)-1}{m-1-\pk (L)-k}x^n.
\]
Moreover, from  Theorem~\ref{t-opstem} one has
\[
\sum_{m=0}^{\infty}2^{2\pk (L)+1}\sum_{k=0}^{m-1-\pk (L)}\left(\binom{n+1}{k}\right)\binom{n-2\pk (L)-1}{m-1-\pk (L)-k}t^m=\frac{1}{2}\left(\frac{1+t}{1-t}\right)^{n+1}\left(\frac{4t}{(1+t)^2}\right)^{1+\pk(L)},
\]
which is the generating function of $\theta_L$ and only depends on $n$ and $\pk L$. They are clearly linearly independent for $L$ with distinct peak numbers. The result follows immediately from~\cite{MR3810249} Theorem 4.3.
\end{proof}


\newpage

\nocite{*}
\bibliographystyle{alpha}

\bibliography{ecpref}

\end{document}